\documentclass[12pt]{article}
\usepackage{amsfonts}
\usepackage{amsmath, amssymb, amsthm}
\usepackage{enumerate}
\usepackage{epsfig}
\usepackage{verbatim}
\usepackage{graphicx}%
\usepackage{epsfig}

\newtheorem{theorem}{Theorem}

\newtheorem{lemma}[theorem]{Lemma}

\newcommand{\eps}{\varepsilon}

\begin{document}

\title{Analytic regularity for a fourth-order singularly perturbed boundary balue problem with two small parameters
}
\author{I. Sykopetritou and C. Xenophontos\thanks{
Corresponding author. Email: xenophontos@ucy.ac.cy} \\
%EndAName
Department of Mathematics and Statistics \\
University of Cyprus \\
P.O. Box 20537 \\
1678 Nicosia \\
Cyprus}
\maketitle

\begin{abstract}
We consider a fourth order singularly perturbed boundary value problem with
two small parameters, in one dimension, under the assumption of analytic input data. We show that the solution 
may be decomposed into a smooth part, two different width boundary layers, and a negligible remainder.
We provide estimates for arbitrary order derivatives of each term of the decomposition, which are explicit in
the differentiation order and the singular perturbation parameters, and are needed for
proving the convergence of high order numerical methods, such as the $p/hp$ versions
of the Finite Element Method. We also provide classical differentiability results, which show that the solution will be analytic, if the data are analytic, but negative powers of the singular perturbation parameter(s) show up once we start differentiating.
\end{abstract}

\textbf{Keywords}: fourth order singularly perturbed problem; boundary
layers; analytic regularity

\vspace{0.5cm}

\textbf{MSC2020}: 65L11

\section{Introduction}

\label{intro}

Singularly perturbed problems (SPPs) arise in applications when there are several scales present; for example, enzyme kinetics \cite{Frank}, geophysics \cite{Carrier}, chemical reactor theory \cite{LW}, lubrication theory \cite{CC}, liquid crystal studies \cite{AM}, and control theory \cite{KOS}, among others. 
As is well known, a main difficulty in these problems is the
presence of \emph{boundary layers} in the solution, which appear due to the fact
that the limiting problem (i.e. when the singular perturbation parameter(s)
tend to 0), is of different order than the original one, and the (extra)
boundary conditions can only be satisfied if the solution varies rapidly in
the vicinity of the boundary -- hence the name boundary layers, coined by
Prandtl in his seminal work ``On the motion of a fluid with very small viscosity", 
Third World Congress of Mathematicians, August 1903.

The numerical approximation of the solution to SPPs has been studied extensively over the last few decades (see,
e.g., the monographs \cite{mos}, \cite{morton}, \cite{rst} and the references
therein).  In most numerical methods, derivatives of the exact solution
appear in the error estimates, hence one should have a clear picture of how
these derivatives grow with respect to the singular perturbation
parameter(s). For low order numerical methods, such as Finite Differences
(FD) or the $h$ version of the Finite Element Method (FEM), derivatives up
to order 3 are usually sufficient. For high order methods such as the $p$ or $hp$
versions of the FEM, derivatives of arbitrary order are needed, thus knowing
how these behave with respect to the singular perturbation parameter(s) as
well as the differentiation order, is necessary. For second order SPPs with one parameter,
such derivative bounds may be found in, e.g., \cite{melenk}, and for SPPs with two parameters
in, e.g., \cite{SX}. For fourth order problems with one parameter, we refer to
\cite{PC, PCarXiv}. There are non-trivial differences between second and fourth order problems: for one, in second order one parameter problems the layers appear in the solution, while for fourth order problems they appear in the derivative of the solution.  Two parameter second order SPPs feature different width boundary layers at each endpoint, based on the relationship between the singular perturbation parameters. Unlike \cite{PC, PCarXiv}, the presence of two singular perturbation parameters produces two distinct boundary layer scales. Consequently, the recursive systems governing the layer terms become coupled through both $\eps_1$ and $\eps_2$, requiring substantially different analytic estimates.

We focus on a fourth order SPP with two singular perturbation parameters, multiplying the fourth and second derivatives (see  eq.~(\ref{de}) ahead). Under the assumption of analytic input data, we establish analytic
regularity estimates for each term in the decomposition of the solution into smooth and layers parts. These estimates form the basis for establishing the convergence order of high order FEMs (see, e.g.~\cite{XFS}); in fact it is \emph{needed} for the proof of the convergence rate for such methods.

The remainder of the paper is organized as follows: in Section \ref{model} we present the model problem and its regularity in terms of classical differentiability. Section \ref{asy} contains the asymptotic expansion for
the solution, and in Section \ref{erranal} we establish derivative bounds which are explicit in the differentiation order as well as the singular perturbation parameters. In Section \ref{nr} we provide an illustration through a numerical example. Finally in Section \ref{concl} we summarize our conclusions.

Throughout the article we will utilize the following notation: with $I\subset \mathbb{R}$ an interval with boundary $\partial I$ and measure $\left\vert I\right\vert $, we will denote by $C^{k}(I)$ the space
of continuous functions on $I$ with continuous derivatives up to order $k$.
We will use the usual Hilbert spaces $H^{k}\left( I\right)$, of functions on $I$
with weak derivatives of order $0,1,\ldots,k$ in $L^2(I)$, equipped
with the norm and seminorm,  $\left\Vert \cdot \right\Vert _{k,I}$
and $\left\vert \cdot \right\vert _{k,I}\,$, respectively. The usual $%
L^{2}(I)$ inner product will be denoted by $\left\langle \cdot ,\cdot
\right\rangle _{I}$, with the subscript omitted when there is no confusion.
We will also use the space 
\begin{equation*}
H_{0}^{2}\left( I\right) =\left\{ u\in H^{2}\left( I\right) :\left.
u\right\vert _{\partial I}=\left. u' \right\vert
_{\partial I}=0\right\} .
\end{equation*}%
The norm of the space $L^{\infty }(I)$ of essentially bounded functions is
denoted by $\Vert \cdot \Vert _{\infty ,I}$. Finally, the notation
\textquotedblleft $a\lesssim b$\textquotedblright\ means \textquotedblleft $%
a\leq Cb$\textquotedblright\ with $C$ being a generic positive constant,
independent of any discretization or singular perturbation parameters.

%%%%%%%%%%%%%%%%%%%%%%%%%%%%%%%%%%%%%%%%%%%%%%%%%%%%%%%%%%%%%%%%%%%%%%%%%%%%%%%%%%

\section{The model problem and the asymptotic expansion\label{model}}

We consider the following model problem: Find $u \in C^4(I) \cap C^1(\bar{I})$ such that 
\begin{eqnarray}
L_{\varepsilon_1, \varepsilon_2} u := \varepsilon _{1}^{2}u^{(4)}(x)-\varepsilon _{2}^{2}\left( b(x)u^{\prime
}(x)\right) ^{\prime }+c(x)u(x) &=&f(x)\ \text{in }I=\left( 0,1\right) ,
\label{de} \\
u(0)=u(1)=u^{\prime }(0)=u^{\prime }(1) &=&0  \label{bc}
\end{eqnarray}%
where $0<\varepsilon _{1},\varepsilon _{2}\leq 1$ are given parameters that
can approach zero and the functions $b,c,f$ are given and sufficiently
smooth. In particular, we assume that they are (real) analytic functions
satisfying, for some positive constants $\gamma _{f},\gamma _{c},\gamma _{b}$%
, independent of $\varepsilon _{1}$ and $\varepsilon _{2},$%
\begin{equation}
\left\Vert f^{(n)}\right\Vert _{\infty ,I}\lesssim n!\gamma
_{f}^{n},\left\Vert c^{(n)}\right\Vert _{\infty ,I}\lesssim n!\gamma
_{c}^{n},\left\Vert b^{(n)}\right\Vert _{\infty ,I}\lesssim n!\gamma
_{b}^{n}\;\forall \;n\in \mathbb{N}_{0}.  \label{analytic}
\end{equation}%
In addition, we assume that there exist positive constants $b_* ,c_*$,
independent of $\varepsilon _{1}$ and $\varepsilon _{2},$ such that $\forall
\;x\in \overline{I}$ 
\begin{equation}
b(x)\geq b_* >0,c(x)\geq c_*>0.  \label{data}
\end{equation}%
We will focus on the case 
\begin{equation}
\varepsilon _{1} \ll \varepsilon _{2}^{2},  \label{eps_relation}
\end{equation}%
since in the complementary case, we have a usual perturbation of
reaction-diffusion type with one-parameter, like the one studied in \cite{PCarXiv}.

It follows from the results of O'Malley  \cite{omalley, omalley2} that the solution $u$ to 
(\ref{de})--(\ref{bc}) satisfies $\Vert u \Vert_{\infty, I} \lesssim 1$. Moreover, $u$ can
be decomposed into a smooth part $S$, two boundary layer parts at the left
endpoint $E_{1}$ and $E_{2},$ and two boundary layer parts at the right endpoint $E_{3}$ and $E_{4}$, viz.%
\begin{equation*}
u=S+E_{1}+E_{2}+E_{3}+E_{4},
\end{equation*}%
with%
\begin{equation*}
\left\vert S^{(n)}(x)\right\vert \lesssim 1\;,\;\left\vert
E_{1}^{(n)}(x)\right\vert \lesssim \frac{1}{\varepsilon _{2}}\left( \frac{%
\varepsilon _{1}}{\varepsilon _{2}}\right) ^{1-n}e^{-\beta \varepsilon
_{2}x/\varepsilon _{1}}\;,\;\left\vert E_{2}^{(n)}(x)\right\vert \lesssim
\varepsilon _{2}^{-n}e^{-\beta x/\varepsilon _{2}},
\end{equation*}%
\begin{equation*}
\left\vert E_{3}^{(n)}(x)\right\vert \lesssim \frac{1}{\varepsilon _{2}}%
\left( \frac{\varepsilon _{1}}{\varepsilon _{2}}\right)
^{1-n}e^{-\beta \varepsilon _{2}(1-x)/\varepsilon _{1}}\;,\;\left\vert
E_{4}^{(n)}(x)\right\vert \lesssim \varepsilon
_{2}^{-n}e^{-\beta (1-x)/\varepsilon _{2}},
\end{equation*}%
for all $x\in \overline{I}$ and for $n=1,2,3$. This regularity result is sufficient
for proving convergence for a fixed order $h$ FEM, but not for a $p/hp$ FEM
-- a more refined regularity result is needed for the smooth part and bounds for
derivatives of arbitrary order should be explicit in terms of $\varepsilon_1, \varepsilon_2$,
as well as the order of differentiation. This is the main
goal of the article and will be achieved in the sections that follow.

We close this section with the following classical differentiability result.

\begin{theorem}
Let $u$ be the solution of (\ref{de})--(\ref{bc}) and assume (\ref{analytic}%
)--(\ref{eps_relation}) hold. Then, there exists a positive constant $K$,
independent of $\varepsilon _{1},\varepsilon _{2},$ such that for
all $n=1,2,\ldots $%
\begin{equation*}
\left\Vert u^{(n)}\right\Vert _{\infty ,I}\lesssim K^{n}\max \left\{
n^{n}, \frac{1}{\varepsilon_2} \left( \frac{\varepsilon _{1}}{\varepsilon _{2}}\right)
^{1-n}\right\} .
\end{equation*}
\end{theorem}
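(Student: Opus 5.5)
We prove the bound by induction on $n$, using the differential equation to express $u^{(n+4)}$ through lower derivatives. The base cases $n=1,2,3$ come from the O'Malley decomposition quoted above. For $n\le 3$, that decomposition gives $\|u^{(n)}\|_{\infty,I}\lesssim 1+\varepsilon_2^{-1}(\varepsilon_1/\varepsilon_2)^{1-n}+\varepsilon_2^{-n}$. Under (\ref{eps_relation}) we have $\varepsilon_2/\varepsilon_1\ge \varepsilon_2^{-1}$, so $\varepsilon_2^{-n}\le \varepsilon_2^{-1}(\varepsilon_2/\varepsilon_1)^{n-1}$. The base cases therefore hold with $K$ large. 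We also record $\|u\|_{\infty,I}\lesssim 1$.

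For the inductive step, write $\mu:=\varepsilon_2/\varepsilon_1\ge 1$ and $M_n:=K^n\max\{n^n,\varepsilon_2^{-1}\mu^{n-1}\}$. Differentiating (\ref{de}) $n$ times and dividing by $\varepsilon_1^2$ gives
\begin{equation*}
u^{(n+4)}=\varepsilon_1^{-2}f^{(n)}+\mu^{2}\sum_{k=0}^{n+1}\binom{n+1}{k}b^{(k)}u^{(n+2-k)}-\varepsilon_1^{-2}\sum_{k=0}^{n}\binom{n}{k}c^{(k)}u^{(n-k)}.
\end{equation*}
In the middle sum, the factor $\mu^2$ times $M_{n+2-k}$ has exactly the scaling of $M_{n+4-k}$ in the layer regime. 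Using (\ref{analytic}) and the standard estimate $\sum_k\binom{n}{k}k!\gamma^k(n-k)^{n-k}\lesssim n^n\sum_k(\gamma/n)^k\cdots$, the sum is bounded by $\mu^2M_{n+2}$ times a geometric series. That series is controlled once $K\ge 2\gamma_b$ and $K$ is large.

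The main obstacle is the terms carrying $\varepsilon_1^{-2}$. For these, $\varepsilon_1^{-2}=\mu^2\varepsilon_2^{-2}$, so the factor is not $\mu^4$ times something bounded. To handle this, I would use the bound $\varepsilon_1^{-2}\le \varepsilon_2^{-1}\mu^{3}$, which follows from $\varepsilon_1\le\varepsilon_2^2$ since then $\varepsilon_2\le \varepsilon_2/\varepsilon_1\cdot\varepsilon_1$; more simply, $\varepsilon_2^{-1}\le\mu$. Then $\varepsilon_1^{-2}\|f^{(n)}\|\lesssim n!\gamma_f^n\mu^2\varepsilon_2^{-2}\le n!\gamma_f^n\varepsilon_2^{-1}\mu^{3}$. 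This is dominated by $K^{n+4}\varepsilon_2^{-1}\mu^{n+3}$ because $\mu\ge1$. Similarly, $\varepsilon_1^{-2}M_{n-k}$ is at most $\varepsilon_2^{-1}\mu^{3}M_{n-k}\le K^{-4-k}M_{n+4-k}$, up to the factorial/power comparison.

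If the naive induction on the single quantity $M_n$ does not close cleanly because of the $\max$, I would instead prove the stronger two-parameter statement $\|u^{(n)}\|\le C K^n\max\{n,\mu\}^n\,\varepsilon_2^{-1}\mu^{-1}$. This is equivalent up to constants for $n\ge1$, using $n^n\le \max\{n,\mu\}^n$ and $\mu^{n-1}\varepsilon_2^{-1}$, and the case analysis $\mu\le n$ versus $\mu>n$ handles the binomial sums via $\max\{n-k,\mu\}^{n-k}\le \max\{n,\mu\}^{n-k}$. The $\mu^2$ factors then simply raise the exponent: $\mu^2\max\{n,\mu\}^{n+2-k}\le \max\{n+4,\mu\}^{n+4-k}$. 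Choosing $K$ larger than a fixed multiple of $\max\{\gamma_b,\gamma_c,\gamma_f\}$ and the base-case constants closes the induction.
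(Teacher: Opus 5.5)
Your proposal is correct, most cleanly in its fallback form. It follows the paper's skeleton: strong induction on $n$ through the $n$-times differentiated equation, base cases $n=1,2,3$ from O'Malley's bounds, Leibniz' rule with (\ref{analytic}), and $K$ large enough to sum the geometric series in $\gamma_b/K$ and $\gamma_c/K$.

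The real difference is the quantity you induct on, and here your fallback is the better choice. The paper carries $\max\{n^n,\varepsilon_2^{-1}\mu^{n-1}\}$ (with $\mu=\varepsilon_2/\varepsilon_1$). After dividing by $\varepsilon_1^2$ it only verifies the layer branch. The polynomial branch $(n+2)^{n+2}/\varepsilon_1^2$ is never compared with $\max\{(n+4)^{n+4},\varepsilon_2^{-1}\mu^{n+3}\}$. In the form displayed there, with the factor $\varepsilon_2^2$ of the $b$-terms already discarded, it is not dominated by that quantity: for $n+2\approx\mu$ the ratio is of order $\varepsilon_2^{-2}$.

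Your normalization $\varepsilon_2^{-1}\mu^{-1}\max\{n,\mu\}^n$ makes this step automatic:
\begin{itemize}
\item the factors $\mu^2$ and $\varepsilon_1^{-2}\le\mu^4$ just raise the exponent of $\max\{\cdot,\mu\}$;
\item the Leibniz factors $\binom{n}{k}k!\le n^k$ are absorbed into $\max\{n+4,\mu\}^k$;
\item the sharper $\varepsilon_1^{-2}\le\varepsilon_2^{-1}\mu^3$ is needed only for $f^{(n)}$ and $c^{(n)}u$. There you rightly use $\|u\|_{\infty,I}\lesssim 1$ rather than the layer-type bound, which is false at $n=0$.
\end{itemize}
Since $\varepsilon_2^{-1}\mu^{-1}=\varepsilon_1/\varepsilon_2^2\le 1$, your bound implies the stated one directly. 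The equivalence you assert holds only up to a factor $C^n$, and you do not need it.

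Your first route has gaps that the fallback makes moot:
\begin{itemize}
\item $n!\gamma_f^n\varepsilon_2^{-1}\mu^3\le K^{n+4}\varepsilon_2^{-1}\mu^{n+3}$ does not follow from $\mu\ge 1$. It fails once $n\gg\mu$, and there you must use the $(n+4)^{n+4}$ branch, via $\varepsilon_2^{-1}\mu^3\le\mu^4\le n^4$.
\item $\mu^2M_{n+2}\lesssim M_{n+4}$ on the $n^n$ branch needs a case split, $\mu\le 2(n+4)$ versus $\mu>2(n+4)$, which you did not carry out.
\item Contrary to your remark, $\varepsilon_1^{-2}=\mu^2\varepsilon_2^{-2}\le\mu^4$ \emph{is} $\mu^4$ times something bounded. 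The crude bound is too weak only for the $O(1)$-size terms $f^{(n)}$ and $c^{(n)}u$, which is exactly where your sharper estimate enters.
\end{itemize}
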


\begin{proof}
The proof is by induction on $n$ and follows \cite{melenk}. The desired result holds for 
$n=1,2,3$, by the results in \cite{omalley, omalley2}, so we assume it holds for $k \le n+3$
and show it for $k=n+4$.

Differentiating (\ref{de}) $n$ times gives

$$
\begin{aligned}
\varepsilon_1^2 u^{(n+4)} & =f^{(n)}+\varepsilon_2^2\left(b u^{\prime}\right)^{(n+1)}-(c u)^{(n)} \\
& =f^{(n)}+\sum_{\nu=0}^{n+1}\binom{n+1}{\nu} \varepsilon_2^2 b^{(\nu)} u^{(n+2-\nu)}-\sum_{\nu=0}^n\binom{n}{\nu} c^{(\nu)} u^{(n-\nu)} .
\end{aligned}
$$

By the induction hypothesis, we have

$$
\begin{aligned}
\varepsilon_1^2\left\|u^{(n+4)}\right\|_{\infty, I} \leq & \left\|f^{(n)}\right\|_{\infty, I}+ C \sum_{\nu=0}^{n+1}\binom{n+1}{\nu} \varepsilon_2^2 \nu!\gamma_b^\nu K^{n+2-\nu} \times \\
& \times \max \left\{(n+2-\nu)^{n+2-\nu}, \frac{1}{\varepsilon_2}\left(\frac{\varepsilon_1}{\varepsilon_2}\right)^{1-(n+2-\nu)}\right\}+ \\
& +C \sum_{\nu=0}^n\binom{n}{\nu} \nu!\gamma_c^\nu K^{n-\nu} \times \max \left\{(n-\nu)^{n-\nu}, \frac{1}{\varepsilon_2}\left(\frac{\varepsilon_1}{\varepsilon_2}\right)^{1-(n-\nu)}\right\} .
\end{aligned}
$$
Using the estimates below
$$
\begin{gathered}
\binom{n+1}{\nu} \nu!\max \left\{(n+2-\nu)^{n+2-\nu}, \frac{1}{\varepsilon_2}\left(\frac{\varepsilon_1}{\varepsilon_2}\right)^{1-(n+2-\nu)}\right\} \leq
 \max \left\{(n+2)^{n+2}, \frac{1}{\varepsilon_2}\left(\frac{\varepsilon_1}{\varepsilon_2}\right)^{1-(n+2)}\right\}, \\
\binom{n}{\nu} \nu!\max \left\{(n-\nu)^{n-\nu}, \frac{1}{\varepsilon_2}\left(\frac{\varepsilon_1}{\varepsilon_2}\right)^{1-(n-\nu)}\right\} \leq \max \left\{(n+2)^n, \frac{1}{\varepsilon_2}\left(\frac{\varepsilon_1}{\varepsilon_2}\right)^{1-n}\right\}, \\
\left\|f^{(n)}\right\|_{\infty, I} \leq C n!\gamma_f^n \leq C \gamma_f^n \max \left\{(n+2)^n, \frac{1}{\varepsilon_2}\left(\frac{\varepsilon_1}{\varepsilon_2}\right)^{1-n}\right\},
\end{gathered}
$$
which follow by elementary considerations, we obtain
$$
\begin{aligned}
\varepsilon_1^2\left\|u^{(n+4)}\right\|_{\infty, I} \leq & C \gamma_f^n \max \left\{(n+2)^n, \frac{1}{\varepsilon_2}\left(\frac{\varepsilon_1}{\varepsilon_2}\right)^{1-n}\right\}+ \\
& +C \varepsilon_2^2 K^{n+4} \max \left\{(n+2)^{n+2}, \frac{1}{\varepsilon_2}\left(\frac{\varepsilon_1}{\varepsilon_2}\right)^{1-(n+2)}\right\} \sum_{\nu=0}^{n+1} \frac{1}{K^2}\left(\frac{\gamma_b}{K}\right)^\nu+ \\
& +C K^{n+4} \max \left\{(n+2)^n, \frac{1}{\varepsilon_2}\left(\frac{\varepsilon_1}{\varepsilon_2}\right)^{1-n}\right\} \sum_{\nu=0}^n \frac{1}{K^4}\left(\frac{\gamma_c}{K}\right)^\nu \\
\leq & C K^{n+4} \max \left\{(n+2)^{n+2}, \frac{1}{\varepsilon_2}\left(\frac{\varepsilon_1}{\varepsilon_2}\right)^{1-n}+\varepsilon_2\left(\frac{\varepsilon_1}{\varepsilon_2}\right)^{-1-n}\right\} \times \\
& \times\left[\frac{1}{K^4}+\frac{1}{K^2} \frac{1}{\left(1-\gamma_b / K\right)}+\frac{1}{K^4} \frac{1}{\left(1-\gamma_c / K\right)}\right].
\end{aligned}
$$
We choose the constant $K>\max \left\{1, \gamma_f, \gamma_b, \gamma_c\right\}$
such that the expression in brackets above is bounded by 1, thus
$$
\varepsilon_1^2\left\|u^{(n+4)}\right\|_{\infty, I} \leq C K^{n+4} \max \left\{(n+2)^{n+2}, \frac{1}{\varepsilon_2}\left(\frac{\varepsilon_1}{\varepsilon_2}\right)^{1-n}+\varepsilon_2\left(\frac{\varepsilon_1}{\varepsilon_2}\right)^{-1-n}\right\}
$$
and dividing by $\varepsilon_1^2$ gives the result, since
$$
\frac{1}{\varepsilon_1^2} \frac{1}{\varepsilon_2}\left(\frac{\varepsilon_1}{\varepsilon_2}\right)^{1-n} \lesssim \frac{1}{\varepsilon_2}\left(\frac{\varepsilon_1}{\varepsilon_2}\right)^{1-(n+4)}
$$
and
$$
\frac{\varepsilon_2}{\varepsilon_1^2}\left(\frac{\varepsilon_1}{\varepsilon_2}\right)^{-1-n}=\frac{1}{\varepsilon_2}\left(\frac{\varepsilon_1}{\varepsilon_2}\right)^{1-(n+4)}
$$
due to $\varepsilon_1 \ll \varepsilon_2^2$.

\end{proof}

\subsection{The asymptotic expansion\label{asy}}

The method of matched asymptotic expansions \cite{OM} usually gives a more detailed picture of the regularity of $u$. To this end, we define the \emph{stretched variables} 
$$\tilde{x}=\frac{x}{\varepsilon _{2}},\bar{x}=\frac{x \varepsilon _{1}}{\varepsilon_{2}},\check{x}=\frac{1-x}{\varepsilon _{2}}, \hat{x}=\frac{(1-x) \varepsilon _{1}}{\varepsilon _{2}},
$$
and make the formal ansatz 
\begin{equation}
u\sim \sum_{i=0}^{\infty }\sum_{j=0}^{\infty }\varepsilon
_{2}^{i}(\varepsilon _{1}/\varepsilon _{2}^{2})^{j}\left( u_{i,j}(x)+\tilde{u%
}_{i,j}^{BL}(\tilde{x})+\bar{u}_{i,j}^{BL}(\bar{x})+\check{u}_{i,j}^{BL}(%
\check{x})+\hat{u}_{i,j}^{BL}(\hat{x})\right) ,  \label{c1}
\end{equation}%
with $u_{i,j},\tilde{u}_{i,j}^{BL},\bar{u}_{i,j}^{BL},\check{u}_{i,j}^{BL},%
\hat{u}_{i,j}^{BL}$ to be determined. Substituting (\ref{c1}) into (\ref{de}%
), separating the slow and fast variables, and equating like powers of $%
\varepsilon _{1},\varepsilon _{2}$, we get the following:

For $u_{i,j}$, i.e. for the (slow) variable $x$, we obtain
\begin{equation}
\left. 
\begin{array}{c}
u_{0,0} = \frac{f}{c} \\
u_{1,0}=0,u_{i,0}=\frac{\left( bu_{i-2,0}^{\prime }\right) ^{\prime }}{c}%
,i\geq 2 \\  
u_{i,1}=0,i\geq 0 \\
u_{0,j}=u_{1,j}=u_{2,j}=u_{3,j}=0,j\geq 2 \\
u_{i,j}=\frac{1}{c}\left\{ \left( bu_{i-2,j}^{\prime }\right) ^{\prime
}-u_{i-4,j-2}^{(4)}\right\} ,i\geq 4,j\geq 2%
\end{array}%
\right\},  \label{uij}
\end{equation}
with the details appearing in the Appendix.

For the boundary layer $\tilde{u}_{i,j}^{BL}$, i.e. for the (fast) variable $%
\tilde{x}=x/\varepsilon _{2}$, we first write $%
b(x)=b(\varepsilon _{2}x/\varepsilon _{2})=b(\varepsilon _{2}\tilde{x})$, and use a Taylor series to get
\begin{equation*}
b(\tilde{x})=\sum_{\ell =0}^{\infty }\varepsilon _{2}^{\ell } \tilde{b}_{\ell }(\tilde{x}),
\end{equation*}%
where $\tilde{b}_{\ell }(\tilde{x})=$ $\frac{b^{(\ell )}(0)}{\ell !}\tilde{x}^{\ell }$, and
similarly for $c$. The same procedure as above yields (see the Appendix for the details)

\begin{equation}
\left. 
\begin{array}{c}
-\tilde{b}_{0}\left(\tilde{u}^{BL}_{0,0}\right)^{\prime \prime }+\tilde{c}_{0}\tilde{u}^{BL}_{0,0}=0 \\
-\tilde{b}_{0}\left(\tilde{u}^{BL}_{i,0}\right)^{\prime \prime }+\tilde{c}_{0}\tilde{u}^{BL}_{i,0}=%
\sum_{\ell =1}^{i}\left\{ \left( \tilde{b}_{\ell }%
\left(\tilde{u}^{BL}_{i-\ell ,0}\right)^{\prime }\right) ^{\prime }-\tilde{c}_{\ell }\tilde{u}^{BL}_{i-\ell ,0}\right\} ,i\geq 1 \\
-\tilde{b}_{0}\left(\tilde{u}^{BL}_{0,1}\right)^{\prime \prime }+\tilde{c}_{0}\tilde{u}^{BL}_{0,1}=0 \\
-\tilde{b}_{0}\left(\tilde{u}^{BL}_{i,1}\right)^{\prime \prime }+\tilde{c}_{0}\tilde{u}^{BL}_{i,1}=%
\sum_{\ell =1}^{i}\left\{ \left( \tilde{b}_{\ell }\left(\tilde{u}^{BL}_{i-\ell ,1}\right)^{\prime }\right) ^{\prime }-\tilde{c}_{\ell }\tilde{u}^{BL}_{i-\ell ,1}\right\} ,i\geq 1 \\
-\tilde{b}_{0}\left(\tilde{u}^{BL}_{0,j}\right)^{\prime \prime }+\tilde{c}_{0}\tilde{u}^{BL}_{0,j}=-\left(\tilde{u}^{BL}_{0,j-2}\right)^{(4)},j\geq 2 \\
-\tilde{b}_{0}\left(\tilde{u}^{BL}_{i,j}\right)^{\prime \prime }+\tilde{c}_{0}\tilde{u}^{BL}_{i,j}=-\left(\tilde{u}^{BL}_{i,j-2}\right)^{(4)}+\sum_{\ell =1}^{i}\left\{ \left( \tilde{b}_{\ell }%
\left(\tilde{u}^{BL}_{i-\ell ,j}\right)^{\prime }\right) ^{\prime }-\tilde{c}_{\ell }\tilde{u}^{BL}_{i-\ell ,j}\right\} ,i\geq 1,j\geq 2%
\end{array}%
\right\}.  \label{ut_ij}
\end{equation}
Correspondingly, for the boundary layer $\bar{u}_{i,j}^{BL}$, i.e. for the (fast)
variable $\bar{x}=x\varepsilon _{2}/\varepsilon _{1}$, we first write 
$b(x)=b(\varepsilon _{1}\varepsilon _{2}x/\varepsilon
_{1}\varepsilon _{2})=b(\varepsilon _{1}\bar{x}/\varepsilon _{2})$ and use a Taylor
series to get
\begin{equation*}
b(\bar{x})=\sum_{\ell =0}^{\infty } \left( \frac{\varepsilon _{1}}{\varepsilon _{2}}\right) ^{\ell }\bar{b}_{\ell }(\bar{x}),
\end{equation*}
where $\bar{b}_{\ell }(\bar{x})=$ $\frac{b^{(\ell )}(0)}{\ell !}\bar{x}^{\ell }$, and
similarly for $c$. Then, we find that (see the Appendix for the details)
\begin{equation}
\left. 
\begin{array}{c}
\left(\bar{u}^{BL}_{i,0}\right)^{(4)}-\bar{b}_{0}\left(\bar{u}^{BL}_{i,0}\right)^{\prime \prime }=0,i\geq 0 \\
\left(\bar{u}^{BL}_{0,1}\right)^{(4)}-\bar{b}_{0}\left(\bar{u}^{BL}_{0,1}\right)^{\prime \prime }=0 \\
\left(\bar{u}^{BL}_{i,1}\right)^{(4)}-\bar{b}_{0}\left(\bar{u}^{BL}_{i,1}\right)^{\prime \prime }=\left( \bar{b}_{1}\left(\bar{u}^{BL}_{i-1,0}\right)^{\prime }\right) ^{\prime },i\geq 1 \\
\left(\bar{u}^{BL}_{0,j}\right)^{(4)}-\bar{b}_{0}\left(\bar{u}^{BL}_{0,j}\right)^{\prime \prime }=-\bar{c}_{0}\bar{u}^{BL}_{0,j-2},j\geq 2 \\
\left(\bar{u}^{BL}_{1,2}\right)^{(4)}-\bar{b}_{0}\left(\bar{u}^{BL}_{1,2}\right)^{\prime \prime }=-\bar{c}_{0}\bar{u}^{BL}_{1,0}+\left( \bar{b}_{1}\left(\bar{u}^{BL}_{0,1}\right)^{\prime }\right) ^{\prime } \\
\left(\bar{u}^{BL}_{i,2}\right)^{(4)}-\bar{b}_{0}\left(\bar{u}^{BL}_{i,2}\right)^{\prime \prime }=-\bar{c}_{0}\bar{u}^{BL}_{i,0}+\left( \bar{b}_{2}\left(\bar{u}^{BL}_{i-2,0}\right)^{\prime }\right) ^{\prime }+\left( \bar{b}_{1}\left(\bar{u}^{BL}_{i-1,1}\right)^{\prime }\right) ^{\prime },i\geq 2 \\
\left(\bar{u}^{BL}_{i,j}\right)^{(4)}-\bar{b}_{0}\left(\bar{u}^{BL}_{i,j}\right)^{\prime \prime }=-\bar{c}_{0}\bar{u}^{BL}_{i,j-2}+\left( \bar{b}_{j}\left(\bar{u}^{BL}_{i-j,0}\right)^{\prime }\right) ^{\prime }+\left( \bar{b}_{j-1}\left(\bar{u}^{BL}_{i-j+1,1}\right)^{\prime }\right) ^{\prime }+ \\
+\sum_{\ell =1}^{j-2}\left\{ \left( \bar{b}_{\ell}\left(\bar{u}^{BL}_{i-\ell ,j-\ell}\right)^{\prime }\right) ^{\prime }-\bar{c}_{\ell}\bar{u}^{BL}_{i-\ell ,j-\ell -2}\right\} ,i\geq 3,j=3,\ldots ,i \\
\left(\bar{u}^{BL}_{i,j}\right)^{(4)}-\bar{b}_{0}\left(\bar{u}^{BL}_{i,j}\right)^{\prime \prime }=-\bar{c}_{0}\bar{u}^{BL}_{i,j-2}+\left( \bar{b}_{i}\left(\bar{u}^{BL}_{0,1}\right)^{\prime }\right) ^{\prime }+ \\
+\sum_{\ell =1}^{i-1}\left\{ \left( \bar{b}_{\ell}\left(\bar{u}^{BL}_{i-\ell ,j-\ell}\right)^{\prime }\right) ^{\prime }-\bar{c}_{\ell}\bar{u}^{BL}_{i-\ell ,j-\ell -2}\right\} ,i\geq 2,j=i+1 \\
\left(\bar{u}^{BL}_{i,j}\right)^{(4)}-\bar{b}_{0}\left(\bar{u}^{BL}_{i,j}\right)^{\prime \prime }=-\bar{c}_{0}\bar{u}^{BL}_{i,j-2}+ \\
+\sum_{\ell =1}^{i}\left\{ \left( \bar{b}_{\ell}\left(\bar{u}^{BL}_{i-\ell ,j-\ell}\right)^{\prime }\right) ^{\prime }-\bar{c}_{\ell}\bar{u}^{BL}_{i-\ell ,j-\ell -2}\right\} ,i\geq 1,j>i+1
\end{array}%
\right\}.  \label{ubar_ij}
\end{equation}
Both (\ref{ut_ij}) and (\ref{ubar_ij}) are supplemented with the boundary
conditions$\;\forall \;i,j:$
\begin{equation}
\left. 
\begin{array}{c}
\tilde{u}^{BL}_{i,j}(0)+\bar{u}^{BL}_{i,j}(0)=-u_{i,j}(0) \\ 
\left(\tilde{u}^{BL}_{i,j}\right)^{\prime }(0)+\left(\bar{u}^{BL}_{i,j}\right)^{\prime }(0)=-u_{i,j}^{\prime }(0)
\\ 
\lim_{x\rightarrow \infty }\tilde{u}^{BL}_{i,j}(x)=0 \; , \; \lim_{x\rightarrow \infty }\bar{u}^{BL}_{i,j}(x)=0 \\
\lim_{x\rightarrow \infty }\left(\tilde{u}^{BL}_{i,j}\right)^{\prime }(x)=0 \; , \; \lim_{x\rightarrow \infty }\left(\bar{u}^{BL}_{i,j}\right)^{\prime }(x)=0
\end{array}
\right\} .  \label{bcBL}
\end{equation}
Analogous problems are satisfied by $\check{u}^{BL}_{i,j}$ and $\hat{u}^{BL}_{i,j}$. A remark is in order regarding the boundary conditions involving limits: they are meant to enforce \emph{decay} in the functions, even though strictly speaking, they are not defined on $(0, \infty)$.

\section{Derivative estimates\label{erranal}}

In this section we present results pertaining to the regularity of each term
in the decomposition for $u$. In particular, we give derivative bounds that
are explicit in the differentiation order and in the singular perturbation
parameters. We begin with the smooth part.

\begin{lemma}\label{lem:complex}
Let $u_{i,j}$ be defined by (\ref{uij}) and assume  (\ref{analytic})--(\ref{eps_relation}) hold. Then there exist positive constants $C,K$ and a complex neighbourhood $G$ of $\overline{I}=[0,1]$, such that the complex extension of 
$u_{i,j}$ (denoted again by $u_{i,j}$) satisfies for $\delta \in (0,1)$, 
\begin{equation}
\left|u_{i, j}(z)\right| \leq C \delta^{-i} K^i i^i \quad \forall z \in G_\delta=\{z \in G: \operatorname{dist}(z, \partial G)>\delta\} .
\end{equation}
\end{lemma}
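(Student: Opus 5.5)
The plan is to use the complex-analytic machinery of Melenk \cite{melenk}: nested neighbourhoods combined with Cauchy's integral formula for derivatives. By (\ref{analytic}), $b$, $c$ and $f$ extend holomorphically to a complex neighbourhood $G$ of $\overline{I}$ and are bounded there. By (\ref{data}) and continuity, shrinking $G$ if necessary, we may assume $|c(z)|\geq c_*/2$ on $G$, so $1/c$ is holomorphic and bounded on $G$. Thus $\|b\|_{\infty,G}$, $\|1/c\|_{\infty,G}$ and $\|f/c\|_{\infty,G}$ are all bounded by a constant $C_0$. The key tool is Cauchy's estimate on nested sets: if $v$ is holomorphic on $G_{\delta_1}$ and $\delta_2>\delta_1$, then
\begin{equation*}
\|v'\|_{\infty,G_{\delta_2}}\leq \frac{1}{\delta_2-\delta_1}\|v\|_{\infty,G_{\delta_1}},
\end{equation*}
and iterating gives $\|v^{(k)}\|_{\infty,G_{\delta_2}}\leq k^k(\delta_2-\delta_1)^{-k}\|v\|_{\infty,G_{\delta_1}}$, by splitting the gap into $k$ equal pieces.

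The proof will be by induction on $i$, treating all $j$ at once. The base cases come from (\ref{uij}): $u_{0,0}=f/c$, which is bounded on $G$; $u_{1,0}=0$; $u_{i,1}=0$; and $u_{i,j}=0$ for $i\leq 3$, $j\geq 2$. For the inductive step, suppose $|u_{k,j}(z)|\leq C\delta^{-k}K^k k^k$ on $G_\delta$ for all $k<i$, all $j$ and all $\delta\in(0,1)$. The recursion
\begin{equation*}
u_{i,j}=\frac{1}{c}\left\{(b u_{i-2,j}')'-u_{i-4,j-2}^{(4)}\right\}
\end{equation*}
involves at most two derivatives of $u_{i-2,j}$ and four derivatives of $u_{i-4,j-2}$. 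Since the index drops by $2$ or $4$ exactly when we differentiate $2$ or $4$ times, the bookkeeping should close: each derivative "costs" one power of $\delta^{-1}K\, i$.

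Concretely, fix $z\in G_\delta$ and write $(bu')'=b'u'+bu''$. Bound $b'$ on $G_{\delta}$ by Cauchy's estimate with $\|b\|_{\infty,G}$, which costs $\delta^{-1}$. This term is slightly awkward, and it is cleaner first to shrink $G$ so that $b$ is holomorphic on a larger set, making $b'$ uniformly bounded on $G$. For the $u$-derivatives, use the Cauchy estimate from $G_{\delta(1-\theta)}$ to $G_\delta$ with gap $\theta\delta$, where $\theta=2/i$ for the second-derivative term and $\theta=4/i$ for the fourth-derivative term. This gives
\begin{equation*}
|u_{i-2,j}''(z)|\lesssim \frac{2^2}{(\theta\delta)^2}\,C\,(\delta(1-\theta))^{-(i-2)}K^{i-2}(i-2)^{i-2},
\end{equation*}
and by $(1-2/i)^{-(i-2)}\leq e^2$ and $(i-2)^{i-2}\leq i^{i-2}$ the right side is bounded by $C e^2\delta^{-i}K^{i-2}i^{i}$. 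The first-derivative term $|u'_{i-2,j}|$ is bounded similarly, by $C\delta^{-(i-1)}K^{i-2}i^{i-1}\leq C\delta^{-i}K^{i-2}i^i$. The fourth-derivative term gives $C e^4 4^4\delta^{-i}K^{i-4}i^i/4^4\cdot$const. Collecting terms,
\begin{equation*}
|u_{i,j}(z)|\leq C\delta^{-i}K^i i^i\, C_0\left(\frac{C_1}{K^2}+\frac{C_2}{K^4}\right),
\end{equation*}
with $C_1,C_2$ absolute constants depending only on $\|b\|,\|b'\|$. Choosing $K$ large makes the bracket at most $1$, which closes the induction.

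The main obstacle is the choice of nested domains. It must avoid losing factors that grow like $c^i$ in the constant, so that $C$ stays independent of $i$ and $j$ and everything is absorbed into $K^i$. This works because the factor $(1-\theta)^{-(i-2)}$ with $\theta\sim 1/i$ stays bounded by $e^2$ (or $e^4$). One must also check that the induction on $i$ alone is legitimate: in the recursion $j$ either stays the same or decreases, and the indices on the right-hand side always have first index less than $i$, so induction on $i$ uniformly in $j$ suffices.
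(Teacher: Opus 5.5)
Your proposal is correct and takes essentially the same route as the paper: an induction on $i$ that is uniform in $j$, using the recursion for $u_{i,j}$ and Cauchy estimates on nested sets $G_{(1-\theta)\delta}\supset G_\delta$ with a gap $\theta\delta$, $\theta\sim 1/i$, so that $(1-\theta)^{-i}$ stays bounded, and then choosing $K$ large to absorb the constants. The differences are cosmetic: the paper applies the Cauchy formula for $k$-th derivatives directly on a circle of radius $\tau\delta$ with the single choice $\tau=1/(i+1)$, whereas you iterate first-derivative estimates (getting $k^k$ instead of $k!$), use term-dependent gaps $2/i$ and $4/i$, and state the strong-induction structure explicitly.
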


\begin{proof}
The proof is by induction on $i$. The case $i=0$ holds trivially, so assume the result holds for $i$ and establish it for $i+1$. 
Let $\tau \in(0,1), x \in G_\delta$ and $K>0$ be a constant so that $\left(\frac{24}{K^4}+\frac{3}{K^2}\right) \leq C$. 
Then by (\ref{uij}), the induction hypothesis with $G_{(1-\tau) \delta} \supset G_\delta$, and Cauchy's Integral Theorem  for derivatives 
applied to $u_{i, j}$, where the integration path is chosen as the circle of radius $\tau \delta$ about $x$, we obtain
$$
\begin{aligned}
\left|u_{i+1, j}(z)\right| \leq & C\left(\left|u_{i-3, j-2}^{(4)}(z)\right|+\left|u_{i-1, j}^{\prime \prime}(z)\right|+\left|u_{i-1, j}^{\prime}(z)\right|\right) \\
\leq & C\left(\frac{24}{(\tau \delta)^4}((1-\tau) \delta)^{-i+3} K^{i-3}(i-3)^{i-3}+\frac{2}{(\tau \delta)^2}((1-\tau) \delta)^{-i+1}\cdot \right. \\
& \left.\cdot K^{i-1}(i-1)^{i-1}+\frac{1}{(\tau \delta)}((1-\tau) \delta)^{-i+1} K^{i-1}(i-1)^{i-1}\right) \\
\leq & C \delta^{-i-1} K^{i+1}(i+1)^{i+1}\left(\frac{1}{K^4} \frac{1}{(i+1)^4} \frac{24}{\tau^4(1-\tau)^{i-3}}\left(\frac{i-3}{i+1}\right)^{i-3}+\right. \\
& \left.+\frac{1}{K^2} \frac{1}{(i+1)^2} \frac{3}{\tau^2(1-\tau)^{i-1}}\left(\frac{i-1}{i+1}\right)^{i-1}\right) .
\end{aligned}
$$
Choose $\tau=1 /(i+1)$. Then
$$
\left|u_{i+1, j}(z)\right| \leq C \delta^{-i-1} K^{i+1}(i+1)^{i+1}\left(\frac{24}{K^4}+\frac{3}{K^2}\right),
$$
so by the choice of $K$ the expression in brackets is bounded, and this completes the proof.
\end{proof}

Using the above, and Cauchy's Integral Theorem for Derivatives, we immediately get (cf. \cite{melenk})

\begin{lemma}\label{lem:u_n_ij}
Let $u_{i,j}$ be defined by (\ref{uij}) and assume  (\ref{analytic})--(\ref{eps_relation}) hold. Then there exist
positive constants $C, K_1, K_2,$ such that
\begin{equation}
\left\|u_{i, j}^{(n)}\right\|_{\infty, I} \leq C n ! K_1^n i^i K_2^i \quad \forall n \in \mathbb{N} .
\end{equation}
\end{lemma}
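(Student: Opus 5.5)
The plan is to derive Lemma~\ref{lem:u_n_ij} from Lemma~\ref{lem:complex} with a single application of Cauchy's integral formula for derivatives, as in \cite{melenk}. Since $\overline{I}$ is compact and $G$ is an open complex neighbourhood of it, there is a fixed $\delta_0\in(0,1)$, depending only on $G$ (and hence only on the data $b,c,f$), with
\[
\{z\in\mathbb{C}:\operatorname{dist}(z,\overline{I})\le 2\delta_0\}\subset G .
\]
With this $\delta_0$, every point $z$ with $\operatorname{dist}(z,\overline{I})\le \delta_0$ lies in $G_{\delta_0}$. Lemma~\ref{lem:complex}, applied with $\delta=\delta_0$, then gives
\[
|u_{i,j}(z)|\le C\,\delta_0^{-i}K^i i^i \quad\text{for all such } z .
\]
This bound is uniform in $j$, since the constants in Lemma~\ref{lem:complex} do not depend on $j$.

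Next fix $x\in\overline{I}$ and $n\in\mathbb{N}$. Apply the Cauchy formula for the $n$-th derivative on the circle $|z-x|=\delta_0$, which lies in the region just described:
\[
u_{i,j}^{(n)}(x)=\frac{n!}{2\pi \mathrm{i}}\oint_{|z-x|=\delta_0}\frac{u_{i,j}(z)}{(z-x)^{n+1}}\,dz .
\]
Estimating the integrand by its maximum times the length of the path gives
\[
|u_{i,j}^{(n)}(x)|\le n!\,\delta_0^{-n}\max_{|z-x|=\delta_0}|u_{i,j}(z)|\le C\,n!\,\delta_0^{-n}\,\delta_0^{-i}K^i i^i .
\]
Taking the supremum over $x\in\overline{I}$ and setting $K_1=1/\delta_0$ and $K_2=K/\delta_0$ yields the claimed estimate. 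The constants $C,K_1,K_2$ are independent of $i,j,n$ and of $\varepsilon_1,\varepsilon_2$.

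The main obstacle is not this Cauchy estimate, which is routine, but the requirement that Lemma~\ref{lem:complex} hold on a neighbourhood $G$ and a set $G_\delta$ that are fixed independently of $i$ and $j$, with $j$-independent constants. This needs the recursion (\ref{uij}) to involve only finitely many earlier terms with analytic coefficients $b$ and $1/c$ on a common $G$. That holds because $c\ge c_*>0$ on $\overline{I}$, so $c$ does not vanish on a sufficiently thin neighbourhood, and $1/c$ is analytic there. I take this as already settled by Lemma~\ref{lem:complex}; the present step only needs to keep $\delta$ fixed rather than letting it vary with $n$ or $i$. Trading $\delta_0^{-n}$ for $n^n$ is unnecessary here, because the Cauchy bound already produces the factor $n!$ directly.
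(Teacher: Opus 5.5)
Your proposal is correct and takes the same route the paper indicates. The paper gives no written proof beyond invoking Lemma~\ref{lem:complex} together with Cauchy's integral formula for derivatives (cf.~\cite{melenk}); fixing $\delta=\delta_0$ and integrating over circles of radius $\delta_0$ centred at points of $\overline{I}$ gives exactly the bound with $K_1=1/\delta_0$ and $K_2=K/\delta_0$. Your check that these circles lie in $G_{\delta_0}$ is the one detail the paper leaves implicit, and you handle it properly.
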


The above result shows that the smooth part of the solution is analytic if the data are analytic. To deal with the boundary layers, we will need the auxiliary result given below.

\begin{lemma}
\label{lemma_aux0} Let $\lambda_1 \in \mathbb{C}$ with $Re(\lambda_1)>0,Re(\lambda_1^{2})>0$. Let $F_1$ be an entire function satisfy- ing, for some $%
C_{F_1}>0,$ $j\in \mathbb{N}_{0},$ $q_1\geq (j+1/2)/|\lambda_1|>0,$ 
\begin{equation*}
\left\vert F_1(z)\right\vert \leq C_{F_1}e^{-Re(\lambda_1 z)}\left( q_1+\left\vert
z\right\vert \right) ^{j} \; \; \forall \; z\in \mathbb{C}.
\end{equation*}%
Similarly, let $j\in \mathbb{N}, \kappa, \lambda_2 \in \mathbb{C}^{+},$ with $\lambda_2 = \kappa^2$ and let $F_2$ be an entire function satisfying, for some $C_{F_2}>0,$ $q_2\geq \frac{4j}{|\kappa|}>0,$
\begin{equation*}
\left\vert F_2(z)\right\vert \leq C_{F_2}e^{-Re(\kappa z)}\left( q_2+\left\vert
z\right\vert \right) ^{2j-1} \; \; \forall \; z\in \mathbb{C}.
\end{equation*}%
With $\alpha_1, \alpha_2 \in \mathbb{C}$, let $v,w:(0,\infty )\rightarrow \mathbb{C}$ be the solutions of the problems
\begin{equation*}
-v^{\prime \prime }+\lambda_1^{2}v=F_1\text{ on }(0,\infty
) \; , \; v(0)=\alpha_1 - w(0) \; , \; \lim_{x\rightarrow \infty }v(x)=0,
\end{equation*}%
\begin{equation*}
w^{(4)}-\lambda_2 w^{(2)}=F_2\text{ on }(0,\infty ) \; , \; w^{\prime }(0)=\alpha_2 - v^{\prime }(0) \; , \; \lim_{x\rightarrow \infty}w(x)=0.
\end{equation*}%
Then $v,w$ can be extended to entire functions (denoted again by $v,w$) which satisfy
\begin{equation*}
\left\vert v(z)\right\vert \leq C \left[ \frac{C_{F}}{j+1}\left(
q_*+\left\vert z\right\vert \right) ^{j+1}+\left\vert \alpha_1 \right\vert+\left\vert \alpha_2 \right\vert \right] e^{-Re(\lambda_1 z)} \; \; \forall \; z\in \mathbb{C},
\end{equation*}%
\begin{equation*}
\left\vert w(z)\right\vert \leq C \left[ \frac{C_{F}}{2j}\left( q_*+\left\vert z\right\vert \right) ^{2j}+\left\vert \alpha_1 \right\vert+\left\vert \alpha_2 \right\vert \right] e^{-Re(\kappa z)} \; \; \forall \; z\in \mathbb{C},
\end{equation*}%
where $C>0,q_* = \max \{q_1,q_2^2 \},C_F = \max \{C_{F_1},C_{F_2} \}$.
\end{lemma}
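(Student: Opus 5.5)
We solve the coupled system explicitly with variation of constants and estimate the resulting integrals along straight-line paths in $\mathbb{C}$, in the spirit of the analogous one-equation lemma in \cite{melenk}.

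\emph{Step 1: the particular solutions.} For the second-order problem, the decaying particular solution is
\begin{equation*}
v_p(z)=\frac{1}{2\lambda_1}\Bigl(\int_0^z e^{-\lambda_1(z-t)}F_1(t)\,dt+\int_z^\infty e^{\lambda_1(z-t)}F_1(t)\,dt\Bigr),
\end{equation*}
which is entire because $F_1$ is entire. For the fourth-order problem, set $\psi=w''$. Then $\psi''-\kappa^2\psi=F_2$, and we take the analogous decaying $\psi_p$, with $\lambda_1$ replaced by $\kappa$. We then obtain $w_p$ by integrating twice from $\infty$:
\begin{equation*}
w_p(z)=\int_z^\infty (t-z)\psi_p(t)\,dt.
\end{equation*}
Decay selects the homogeneous parts. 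For $v$ it is $A e^{-\lambda_1 z}$. For $w$ it is $B e^{-\kappa z}$, since $1$ and $z$ are excluded by $\lim w=0$ and $e^{\kappa z}$ by decay.

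\emph{Step 2: the coupled constants.} The boundary conditions become the $2\times 2$ linear system
\begin{equation*}
A+B=\alpha_1-v_p(0)-w_p(0),\qquad -\lambda_1A-\kappa B=\alpha_2-v_p'(0)-w_p'(0).
\end{equation*}
Its determinant is $\lambda_1-\kappa$. I expect to need, and will state explicitly, that $|\lambda_1-\kappa|$ is bounded below by a constant. In the application $\lambda_1^2=c(0)/b(0)$ and $\kappa^2=b(0)$, and these are fixed data, so this can be arranged; this is where the generic constant $C$ in the statement comes from. Then $|A|+|B|\lesssim |\alpha_1|+|\alpha_2|$ plus the boundary values of the particular solutions. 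Those values are bounded by $C_F q_*^{j+1}$ and $C_F q_*^{2j}$ via Step 3 at $z=0$, and this gets absorbed into the first bracket term because $(q_*+|z|)^{m}\ge q_*^m$.

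\emph{Step 3: estimating the integrals (main obstacle).} For $v_p$ I follow \cite{melenk}. Integrate along the ray $t=z+s\,\bar\lambda_1/|\lambda_1|$, $s\ge 0$, in the second integral, so that $\mathrm{Re}(\lambda_1 t)$ increases linearly. The first integral is taken along the segment $[0,z]$. Inserting the bound on $F_1$ gives
\begin{equation*}
|v_p(z)|\lesssim e^{-\mathrm{Re}(\lambda_1 z)}\Bigl(\tfrac{C_{F_1}}{j+1}(q_1+|z|)^{j+1}+C_{F_1}\int_0^\infty e^{-2|\lambda_1|s}(q_1+|z|+s)^j\,ds\Bigr).
\end{equation*}
The last integral is controlled by $(q_1+|z|)^{j+1}/(j+1)$ precisely because $q_1\ge (j+1/2)/|\lambda_1|$. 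Indeed, $(1+s/Q)^j\le e^{js/Q}$ and $j/Q\le |\lambda_1|$.

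The new difficulty is $w_p$, which involves three nested integrations from infinity plus one finite one. Each integration from infinity along the ray costs a factor of order $(q_2+|z|)/(\text{degree})$, again under the condition $q_2\ge 4j/|\kappa|$, which is designed so that $e^{2j s/Q}$ is dominated by $e^{|\kappa|s/2}$. The outcome of this bookkeeping is a bound of the form $(q_2+|z|)^{2j}$ times factors such as $1/|\kappa|^2$. I would then simply bound $q_2+|z|$ by $q_*+|z|$, using $q_*\ge q_2^2$ and $q_2$ large, to reach the stated form. Getting the $1/(2j)$ prefactor rather than a factorial requires care, and I would aim only for it up to the constant $C$.

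Finally, the cross terms need attention: $w$ has decay rate $\kappa$ and $v$ has rate $\lambda_1$, but the constant $A$ is fed by $w_p(0)$, and vice versa. Since each is evaluated only at $0$, this costs constants only, and the two stated estimates follow, with the $|\alpha_1|+|\alpha_2|$ terms coming from $A e^{-\lambda_1 z}$ and $B e^{-\kappa z}$.
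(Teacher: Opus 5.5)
Your architecture matches the paper's: a particular solution plus the single decaying homogeneous mode for each equation, and a linear system for the two free constants with determinant $\pm(\lambda_1-\kappa)$. The paper's $4\times 4$ system in $v(0),w(0),v'(0),w'(0)$ is equivalent to your $2\times 2$ one. You then use Melenk-type ray estimates, which you derive rather than cite. What fails is the coupling step, which is exactly the part that is new here. Solving your system gives $A=(\kappa r_1+r_2)/(\kappa-\lambda_1)$ with $r_1=\alpha_1-v_p(0)-w_p(0)$ and $r_2=\alpha_2-v_p'(0)-w_p'(0)$. So the $v$-estimate must absorb $|w_p(0)|+|w_p'(0)|$, which has polynomial degree $2j$, into the bracket $\frac{C_F}{j+1}(q_*+|z|)^{j+1}$, which has degree $j+1$. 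With the bound $C_Fq_*^{2j}$ you state, this is impossible: take $z=0$, $j\ge 2$ and $q_*$ large. Even $|v_p(0)|\le C_Fq_*^{j+1}$ is too weak, because without the factor $1/(j+1)$ it cannot be absorbed uniformly in $j$. Your remark that evaluation at $0$ ``costs constants only'' removes the mismatch of the exponentials, but not the mismatch of the degrees.

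The repair uses what your Step 3 actually gives, prefactors included: $|v_p(0)|+|v_p'(0)|\lesssim C_{F_1}q_1^{j+1}/(j+1)$ and $|w_p(0)|+|w_p'(0)|\lesssim C_{F_2}q_2^{2j}/(2j)$. Then $q_*\ge q_2^2$ gives $q_2^{2j}\le q_*^{j}$, and $q_*\ge 16j^2/|\kappa|^2$ gives $\frac{q_*^{j}}{2j}\le\frac{|\kappa|^2}{16}\,\frac{q_*^{j+1}}{j+1}$. This is the real reason for $q_*=\max\{q_1,q_2^2\}$. You use $q_*\ge q_2^2$ only to replace $q_2+|z|$ by $q_*+|z|$, and for that $\max\{q_1,q_2\}$ would already suffice.

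Symmetrically, $B=-(\lambda_1 r_1+r_2)/(\kappa-\lambda_1)$ carries $v_p(0)$ and $v_p'(0)$ into the $w$-estimate. There, $\frac{q_1^{j+1}}{j+1}\lesssim\frac{q_1^{2j}}{2j}\le\frac{q_*^{2j}}{2j}$ holds for $j\ge 1$, because $q_1\ge (j+1/2)/|\lambda_1|$ keeps $q_1^{j-1}$ bounded below uniformly in $j$.

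Two smaller points:
\begin{enumerate}
\item $\lambda_1\ne\kappa$ is a genuine hypothesis ($c(0)\ne b(0)^2$ in the application), not something that can be arranged. The paper uses it tacitly as well.
\item In your bookkeeping for $w_p$, the two integrations from infinity that produce $w_p$ from $\psi_p$ each cost only a factor $\lesssim 1/|\kappa|$ and do not raise the degree; this is what $q_2\ge 4j/|\kappa|$ buys. The degree rises from $2j-1$ to $2j$, with the factor $1/(2j)$, only inside $\psi_p$. If each integration from infinity cost $(q_2+|z|)/(\text{degree})$ as you write, you would end at degree $2j+2$.
\end{enumerate}
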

\begin{proof}
This is a combination of \cite[Lemma 7.3.6]{melenk} and \cite[Proposition 2.4.10]{PC} (see also \cite[Proposition 3]{PCarXiv}). The only difference is that in our case we have different boundary conditions involving $v(0),v'(0),w(0),w'(0)$. Inspecting the proofs of the aforementioned results, we see that through a Green's function representation, we obtain 
\[
v(z) = G_v(z) + (\alpha_1 - w(0)) e^{-\lambda_1 z} \; , \; w(z) = G_w(z) - \frac{1}{\kappa} (\alpha_2 - v'(0)) e^{-\kappa z},
\]
\[
v'(z) = G'_v(z) -\lambda_1 (\alpha_1 - w(0)) e^{-\lambda_1 z} \; , \; w'(z) = G'_w(z) + (\alpha_2 - v'(0)) e^{-\kappa z},
\]
where
\[
|G_v(z)| \leq \frac{C_{F_1}}{|\lambda_1 |}\left(
q_1+\left\vert z\right\vert \right) ^{j+1}\left( j+1\right) ^{-1} e^{-Re(\lambda_1 z)},
\]
\[
|G_w(z)| \leq \frac{C_{F_2}}{2j}\left( q_2+\left\vert z\right\vert \right) ^{2j} e^{-Re(\kappa z)}
\]
and
\[
|G'_v(z)| \leq C_v |G_v(z)| \; , \; |G'_w(z)| \leq C_w |G_w(z)|, 
\]
with the constants $C_v,C_w$ independent of $j$, but depending on $\lambda_1, \kappa, q_1, q_2, C_{F_1}, C_{F_2}$.
Combining the above, we get
\[
v(0) = G_v(0) + (\alpha_1-w(0)) \; , \; w(0) = G_w(0) - \frac{1}{\kappa} (\alpha_2-v'(0)),
\]
\[
v'(0) = G'_v(0) -\lambda_1 (\alpha_1-w(0)) \; , \; w'(0) = G'_w(0) + (\alpha_2-v'(0)).
\]
The above $4 \times 4$ system of equations, in the unknowns $v(0), w(0), v'(0), w'(0)$, once solved yields
\begin{eqnarray*}
v(0) &=& \frac{(\kappa - \lambda_1)G_v(0) - G'_v(0) - \kappa G_w(0) + \kappa \alpha_1 + \alpha_2}{\kappa - \lambda_1},\\
w(0) &=& \frac{\kappa G_w(0) + G'_v(0) - \lambda_1 \alpha_1 - \alpha_2}{\kappa - \lambda_1},\\
v'(0) &=& \frac{\kappa \lambda_1 G_w(0) + \kappa G'_v(0) - \kappa \lambda_1 \alpha_1 - \lambda_1 \alpha_2}{\kappa - \lambda_1},\\
w'(0) &=& \frac{(\kappa - \lambda_1)G'_w(0) - \kappa \lambda_1 G_w(0) - \kappa G'_v(0) + \kappa \lambda_1 \alpha_1 + \kappa \alpha_2}{\kappa - \lambda_1}.
\end{eqnarray*}
An application of \cite[Lemma 7.3.6]{melenk} gives the bound
\begin{eqnarray*}
|v(z)| &\leq& e^{-Re(\lambda_1 z)} \left[ \frac{C_{F_1}}{|\lambda_1|} \left(
q_1 + \left\vert z\right\vert \right) ^{j+1} \left( j+1\right) ^{-1} + \right. \\
&+& \left. \frac{\left\vert (\kappa - \lambda_1) G_v(0) \right\vert + \left\vert G'_v(0) \right\vert + \left\vert \kappa G_w(0) \right\vert + |\kappa \alpha_1| + |\alpha_2|}{|\kappa - \lambda_1|} \large\right] \\
&\leq& C e^{-Re(\lambda_1 z)} \left[ \frac{C_{F_1}}{|\lambda_1|} \left(
q_1 + \left\vert z\right\vert \right) ^{j+1} \left( j+1\right) ^{-1} + \frac{C_{F_1}}{|\lambda_1|} (q_1)^{j+1} \left( j+1\right) ^{-1} + \right. \\
&+& \left. \frac{C_{F_2}}{2j} (q_2)^{2j} + |\alpha_1| + |\alpha_2| \right].
\end{eqnarray*}
The desired result for $v$ follows. 

Similarly, to treat $w$ we use \cite[Proposition 2.4.10]{PC} to get
\begin{equation*}
|w(z)| \leq C \left[ \frac{C_{F_2}}{2j} (q_2 + |z|)^{2j} + \frac{\left\vert G'_w(0) \right\vert + |\alpha_2| + \left\vert v'(0) \right\vert}{|\kappa|} \right] e^{-Re(\kappa z)}.
\end{equation*}
We bound the term $|v'(0)|$ as follows:
\begin{eqnarray*}
|v'(0)| &\leq& \frac{\left\vert \kappa \lambda_1 G_w(0) \right\vert + \left\vert \kappa G'_v(0) \right\vert + |\kappa \lambda_1 \alpha_1| + |\lambda_1 \alpha_2|}{|\kappa - \lambda_1|}\\
&\leq& C \left\{ \frac{C_{F_1}}{|\lambda_1|} (q_1)^{j+1} \left( j+1\right) ^{-1} + \frac{C_{F_2}}{2j} (q_2)^{2j} + |\alpha_1| + |\alpha_2| \right\}.
\end{eqnarray*}
Thus,
\[
|w(z)| \leq C \left[ \frac{C_{F}}{2j} (q_* + |z|)^{2j} + |\alpha_1| + |\alpha_2| \right] e^{-Re(\kappa z)}
\]
as desired.
\end{proof}

We are now in a position to establish the following.

\begin{lemma}
\label{lem:ut_ij}
The functions $\tilde{u}^{BL}_{i,j}, \bar{u}^{BL}_{i,j}$ which satisfy (\ref{ut_ij})--(\ref{bcBL}), are entire and there exist
positive constants $C, \tilde{\gamma}, \bar{\gamma},$ such that
\begin{eqnarray*}
\vert \tilde{u}^{BL}_{i,j}(z) \vert &\leq& C \tilde{\gamma}^{i+j} \frac{(2i+j+|z|)^{2i+j}}{i!} e^{-\tilde{\beta} Re(z)} \; \forall \; z \in \mathbb{C}, \; Re(z) > 0,\\
\vert \bar{u}^{BL}_{i,j}(z) \vert &\leq& C \bar{\gamma}^{i+j} \frac{(i+2j+|z|)^{i+2j}}{j!} e^{-\bar{\beta} Re(z)} \; \forall \; z \in \mathbb{C}, \; Re(z) > 0,
\end{eqnarray*}
where $\tilde{\beta}=\sqrt{\tilde{c}_{0} / \tilde{b}_{0}}$ and $\bar{\beta}=\sqrt{\bar{b}_{0}}$.
\end{lemma}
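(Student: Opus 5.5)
The plan is a double induction on $(i,j)$, ordered lexicographically: first in $j$, and for fixed $j$ in $i$. This order works because the right-hand sides in (\ref{ut_ij}) and (\ref{ubar_ij}) involve only indices $(i',j')$ with $j'<j$, or with $j'=j$ and $i'<i$. At each step the pair $(\tilde{u}^{BL}_{i,j},\bar{u}^{BL}_{i,j})$ is exactly a coupled problem of the type in Lemma \ref{lemma_aux0}.

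To see this, divide the $\tilde{u}$-equation by $\tilde{b}_0$, so that $\lambda_1=\tilde\beta=\sqrt{\tilde c_0/\tilde b_0}$. For the $\bar{u}$-equation take $\lambda_2=\bar b_0=\kappa^2$, so $\kappa=\bar\beta$. The coupled boundary conditions (\ref{bcBL}) supply $\alpha_1=-u_{i,j}(0)$ and $\alpha_2=-u'_{i,j}(0)$. By Lemma \ref{lem:u_n_ij}, these satisfy $|\alpha_1|+|\alpha_2|\le C i^iK_2^i$. Since $i^i\le e^i i!$, this is dominated by the target bounds after enlarging $\tilde\gamma,\bar\gamma$; note that $(2i+j)^{2i+j}/i!\ge i^i$ is large enough. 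The base case $i=j=0$ has zero forcing, and Lemma \ref{lemma_aux0} gives $|\tilde u_{0,0}|,|\bar u_{0,0}|\le C e^{-\beta\,\mathrm{Re} z}$.

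For the inductive step, I bound the forcing terms $F_1,F_2$ using the induction hypothesis.
\begin{itemize}
\item For terms of the form $(\tilde b_\ell(\tilde u_{i-\ell,j})')'$, I expand the product. The factor $|\tilde b_\ell(z)|\le C\gamma_b^\ell|z|^\ell$ follows from (\ref{analytic}), and it is absorbed into $(q+|z|)^{\ell}$.
\item Derivatives of the previous entire functions are estimated by Cauchy's integral formula on a circle of radius $O(1)$ (or radius $\sim 1/(\text{degree})$). This costs only a factor $(q+|z|)$ per derivative, up to constants, because $(q+|z|+r)^m\le e^{mr/q}(q+|z|)^m$.
\item The fourth-derivative term $-(\tilde u_{i,j-2})^{(4)}$ raises the polynomial degree by $4$ while $j$ increases by $2$. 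I will check this against the exponent $2i+j$; if it does not fit, I plan to instead exploit the Cauchy estimate with radius proportional to $q$, which loses constants rather than degree.
\item The term $-\bar c_0\bar u_{i,j-2}$ costs nothing in degree.
\end{itemize}
Summing over $\ell$, the geometric factors $(\gamma_b/\tilde\gamma)^\ell$ are summable once $\tilde\gamma$ is large. This gives $|F_1(z)|\le C_{F_1}e^{-\tilde\beta\mathrm{Re}z}(q+|z|)^{2i+j-1}$ with $C_{F_1}\lesssim \tilde\gamma^{i+j-1}/(i-1)!$, and an analogous bound for $F_2$ with exponent $i+2j-1$ and factor $1/(j-1)!$. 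The exponential weights must match the decay rates. The fast-variable terms decay like $e^{-\bar\beta \mathrm{Re} z}$ on their own scale, and the cross-coupling happens only through the boundary values, which are scalars.

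Applying Lemma \ref{lemma_aux0} then integrates once more, raising the degree by one and dividing by the degree, e.g.\ by $2i+j$. The key point is that the factor $\frac{1}{i}\cdot\frac{1}{(i-1)!}$ comes out to $1/i!$; that is, the $1/(j+1)$-type denominators in Lemma \ref{lemma_aux0} produce the factorial. I also need to verify the hypothesis $q\ge (\deg+1/2)/|\lambda_1|$ of Lemma \ref{lemma_aux0}, which I do by choosing $q=2i+j$ (respectively $i+2j$) up to a constant factor. Entireness of $\tilde u^{BL}_{i,j}$ and $\bar u^{BL}_{i,j}$ follows from the Green's function representation with entire data.

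\textbf{Main obstacle.} The main obstacle is the bookkeeping of the mixed coupling in Lemma \ref{lemma_aux0}. There, $q_*=\max\{q_1,q_2^2\}$ and the common constant $C_F=\max\{C_{F_1},C_{F_2}\}$ mix the two families, so the $\tilde u$ bound picks up contributions from the $\bar u$ forcing and vice versa. To handle this, I will absorb $q_2^{2(i+2j)}$-type boundary contributions, which enter only through values at $z=0$, i.e.\ as constants times $q^{\deg}\lesssim \deg^{\deg}\le e^{\deg}\deg!$. These are absorbed into the geometric factors $\tilde\gamma^{i+j}$ and $\bar\gamma^{i+j}$, accepting larger constants. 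I will check carefully that the denominators $1/i!$ and $1/j!$ survive this absorption, possibly weakening them to a ratio controlled by $\gamma^{i+j}$.
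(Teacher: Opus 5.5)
\textbf{Overall.} Your skeleton is the same as the paper's. Both argue by induction over $(i,j)$ in an order compatible with (\ref{ut_ij})--(\ref{ubar_ij}), bound the forcing with Cauchy estimates and geometric series in $\gamma_{\tilde b}/\tilde\gamma$, $\gamma_{\bar b}/\bar\gamma$, and apply Lemma \ref{lemma_aux0} with $\alpha_1=-u_{i,j}(0)$, $\alpha_2=-u'_{i,j}(0)$. The gap is in the step you defer as the main obstacle, and your proposed fix for it does not work.

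\textbf{Lemma \ref{lemma_aux0} as stated is too weak.} With $q_*=\max\{q_1,q_2^2\}$ and $q_2\gtrsim i+2j$, the bound for $\tilde u^{BL}_{i,j}$ would carry $(q_*+|z|)^{2i+j}\gtrsim (i+2j)^{2(2i+j)}$. This exceeds $\tilde\gamma^{i+j}(2i+j)^{2i+j}/i!$ by a superexponential factor. Your ``$q^{\deg}\lesssim \deg^{\deg}$'' tacitly goes back to the representation in that lemma's proof instead. There the coupling enters only through $G_w(0)$ (into $\tilde u^{BL}_{i,j}$) and $G_v'(0)$ (into $\bar u^{BL}_{i,j}$), each times a fixed constant.

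\textbf{The cross terms cannot be absorbed.} Even in that representation, the induction hypotheses bound each cross term only by the size of the \emph{other} family's bound.
\begin{itemize}
\item For $i=0$, the forcing of $\bar u^{BL}_{0,j}$ is $-\bar c_0\bar u^{BL}_{0,j-2}$. All the hypothesis yields is $|G_w(0)|\lesssim \bar\gamma^{j-2}(2j)^{2j-2}/(j-2)!$, which by Stirling is $(4e\bar\gamma)^j j^j$ up to factors polynomial in $j$. The target for $\tilde u^{BL}_{0,j}$ at $z=0$ is $C\tilde\gamma^j j^j$, so you need $\tilde\gamma\gtrsim 4e\bar\gamma$.
\item For $j=0$, the term $\tilde b_1(\tilde u^{BL}_{i-1,0})''$ gives only $|G_v'(0)|\lesssim\gamma_{\tilde b}\tilde\gamma^{i-1}(2i)^{2i-1}/(i-1)!\approx(\gamma_{\tilde b}/\tilde\gamma)(4e\tilde\gamma)^i i^i$. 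The target for $\bar u^{BL}_{i,0}$ at $z=0$ is $C\bar\gamma^i i^i$, so you need $\bar\gamma\gtrsim 4e\tilde\gamma$.
\end{itemize}
These two requirements are incompatible. ``Absorbing into $\tilde\gamma^{i+j}$ and $\bar\gamma^{i+j}$'' is therefore circular: enlarging either constant enlarges the cross term that feeds the other family.

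\textbf{Consequence.} With the normalisations $1/i!$ versus $1/j!$ and exponents $2i+j$ versus $i+2j$ as inductive hypotheses, the induction does not close. Weakening the factorials, as you contemplate, proves a different statement. What is missing is one of two things:
\begin{itemize}
\item an inductive hypothesis under which the boundary values of the two families are mutually comparable, from which the stated bounds are deduced afterwards; or
\item control of $G_w(0)$ and $G_v'(0)$ finer than the bounds themselves provide.
\end{itemize}
The paper's proof does not supply this either. Each time it invokes Lemma \ref{lemma_aux0}, it keeps only the own-family forcing and $|u_{i,j}(0)|+|u'_{i,j}(0)|$. So you were right to flag the issue, but flagging it does not resolve it.

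\textbf{Secondary issue: derivative bookkeeping.} Cauchy's formula on a circle of radius $1$ costs only a constant and no degree. For a function bounded by $(q+|z|)^m e^{-\tilde\beta\,\mathrm{Re}\,z}$, the $k$-th derivative costs the factor $k!\,e^{\tilde\beta}e^{m/q}$. This is what the paper uses. It is what makes $(\tilde u^{BL}_{i,j-2})^{(4)}$ (degree $2i+j-2$) and $\tilde b_\ell(\tilde u^{BL}_{i-\ell,j})''$ (degree $2i+j-\ell$) fit under the exponent $2i+j$ after one integration. If each derivative really cost a factor $q+|z|$, as you write, the $\ell=1$ term alone would already have degree $2i+j+1$. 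Your fallback of a Cauchy radius proportional to $q$ costs $e^{\tilde\beta c q}(1+c)^m$ per step. That factor is exponential in $2i+j$, not a constant, so it cannot be iterated.
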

\begin{proof}
We recall that
\begin{equation*}
\tilde{b}_{\ell}(\tilde{x}) = \frac{\tilde{x}^{\ell} b^{(\ell)}(0)}{\ell !} \; , \; \tilde{c}_{\ell}(\tilde{x}) = \frac{\tilde{x}^{\ell} c^{(\ell)}(0)}{\ell !} \; , \; \bar{b}_{\ell}(\bar{x}) = \frac{\bar{x}^{\ell} b^{(\ell)}(0)}{\ell !} \; , \; \bar{c}_{\ell}(\bar{x}) = \frac{\bar{x}^{\ell} c^{(\ell)}(0)}{\ell !}.
\end{equation*}
Consequently, there exist positive constants $C_{\tilde{b}}, \gamma _{\tilde{b}}, C_{\tilde{c}}, \gamma _{\tilde{c}}, C_{\bar{b}}, \gamma _{\bar{b}}, C_{\bar{c}}, \gamma _{\bar{c}},$ depending only on $b,c,$ such that
\begin{equation*}
|\tilde{b}_{\ell}(z)| \leq C_{\tilde{b}} \gamma _{\tilde{b}} |z|^{\ell} \; , \; |\tilde{c}_{\ell}(z)| \leq C_{\tilde{c}} \gamma _{\tilde{c}} |z|^{\ell} \; , \; |\bar{b}_{\ell}(z)| \leq C_{\bar{b}} \gamma _{\bar{b}} |z|^{\ell} \; , \; |\bar{c}_{\ell}(z)| \leq C_{\bar{c}} \gamma _{\bar{c}} |z|^{\ell}.
\end{equation*}
Then, with $K_{2}$ the constant from Lemma \ref{lem:u_n_ij}, we choose $\tilde{\gamma } > \max \{ K_{2}, \gamma _{\tilde{b}}, \gamma _{\tilde{c}} \}$,$\bar{\gamma } > \max \{ K_{2}, \gamma _{\bar{b}}, \gamma _{\bar{c}} \}$ so that
\begin{equation*}
\left[ \frac{\gamma _{\tilde{b}} / \tilde{\gamma }}{1 - \gamma _{\tilde{b}} / \tilde{\gamma }} + \frac{\gamma _{\tilde{c}} / \tilde{\gamma }}{1 - \gamma _{\tilde{c}} / \tilde{\gamma }} \right] < 1
\end{equation*}
and
\begin{equation*}
\left[ \frac{\gamma _{\bar{b}} / \bar{\gamma }}{1 - \gamma _{\bar{b}} / \bar{\gamma }} + \frac{\gamma _{\bar{c}} / \bar{\gamma }}{1 - \gamma _{\bar{c}} / \bar{\gamma }} \right] < 1.
\end{equation*}
From (\ref{ut_ij}) and (\ref{ubar_ij}), we have that $\tilde{u}^{BL}_{0,0}(\tilde{x}),\bar{u}^{BL}_{0,0}(\bar{x})$ are given by 
\begin{equation*}
\tilde{u}^{BL}_{0,0}(\tilde{x})=\tilde{C}_{0,0}e^{-\sqrt{\frac{\tilde{c}_{0}}{%
\tilde{b}_{0}}}\tilde{x}}\; , \; \bar{u}^{BL}_{0,0}(\bar{x})=\bar{C}_{0,0}e^{-\sqrt{\bar{%
b}_{0}}\bar{x}}
\end{equation*}
and from (\ref{bcBL}) we find
\[
\tilde{C}_{0,0} = \frac{u_{0,0}(0) \sqrt{\bar{b}_0}+u'_{0,0}(0)}{\sqrt{\frac{\tilde{c}_0}{\tilde{b}_0}}-\sqrt{\bar{b}_0}} \; , \;
\bar{C}_{0,0} = \frac{-u_{0,0}(0) \sqrt{\frac{\tilde{c}_0}{\tilde{b}_0}}-u'_{0,0}(0)}{\sqrt{\frac{\tilde{c}_0}{\tilde{b}_0}}-\sqrt{\bar{b}_0}}.
\]
A similar result holds for $\tilde{u}^{BL}_{0,1}(\tilde{x})$ and $\bar{u}^{BL}_{0,1}(\bar{x})$, involving 
$\tilde{C}_{0,1}, \bar{C}_{0,1}$ given in terms of $u_{0,1}(0)$ and $u'_{0,1}(0)$, hence the desired estimates 
hold for $i=0,j=0,1$. Let us consider $i=0,j=2$: 
\[
-\tilde{b}_0 \left(\tilde{u}^{BL}_{0,2}\right)^{\prime \prime} + \tilde{c}_0 \tilde{u}^{BL}_{0,2} = - \left(\tilde{u}^{BL}_{0,0}\right)^{(4)}
\]
\[
\left( \bar{u}^{BL}_{0,2}\right)^{(4)} - \bar{b}_0 \left(\bar{u}^{BL}_{0,2} \right)^{\prime \prime}= - \bar{c}_0 \bar{u}^{BL}_{0,0}.
\]
Solving the above equations, we find
\[
\tilde{u}^{BL}_{0,2}(\tilde{x}) = \tilde{C}_{0,2} e^{-\sqrt{\frac{\tilde{c}_0}{\tilde{b}_0}} \tilde{x}} - A \left(\tilde{u}^{BL}_{0,0}\right)^{(4)}(\tilde{x})\; , \;
\bar{u}^{BL}_{0,2}(\bar{x}) = \bar{C}_{0,2} e^{-\sqrt{\bar{b}_0} \bar{x}} - B \bar{c}_0 \bar{u}^{BL}_{0,0}(\bar{x}),
\]
with $A, B \in \mathbb{R}$ depending only on $\tilde{b}_0, \tilde{c}_0, \tilde{C}_{0,0}, \bar{b}_0, \bar{c}_0, \bar{C}_{0,0}$. Since 
\[
\left(\tilde{u}^{BL}_{0,2}\right)^{(k)}(0) + \left(\bar{u}^{BL}_{0,2}\right)^{(k)}(0) = - u^{(k)}_{0,2}(0) \; , \; k=0,1,
\]
we can calculate the constants $\tilde{C}_{0,2}, \bar{C}_{0,2}$, in terms of $\tilde{b}_0, \tilde{c}_0, \bar{b}_0, u_{0,2}(0), u'_{0,2}(0)$. This shows that the desired results hold for $i=0,j=2$.

We proceed with 
induction on $j>2$, while keeping $i$ fixed at 0. Assuming the result holds for $j$, we will establish it
for $j+1$. The function $\tilde{u}^{BL}_{0,j+1}$ satisfies (see eq. (\ref{ut_ij}))
\[
-\tilde{b}_{0}\left(\tilde{u}^{BL}_{0,j+1}\right)^{\prime \prime }+\tilde{c}_{0}\tilde{u}^{BL}_{0,j+1}=-%
\left(\tilde{u}^{BL}_{0,j-1}\right)^{(4)} \; , \; j\geq 2.
\]
From the induction hypothesis and Cauchy's Integral Theorem for Derivatives, we get
\[
\vert \left(\tilde{u}^{BL}_{0,j-1}\right)^{(4)}(z) \vert \leq C \tilde{\gamma}^{j-1} (j-1+|z|)^{j-1} e^{-\tilde{\beta} Re(z)}
\]
and by Lemma \ref{lemma_aux0},
\begin{eqnarray*}
\vert \tilde{u}^{BL}_{0,j+1} (z) \vert &\leq& C e^{-\tilde{\beta} Re(z)} \left[ \tilde{\gamma}^{j-1} \frac{(j-1+|z|)^{j}}{j} +
|u_{0,j+1}(0)| + |u'_{0,j+1}(0)| \right]  \notag \\
&\leq& C e^{-\tilde{\beta} Re(z)} \tilde{\gamma}^{j+1} (j+1+|z|)^{j+1}\left[ \frac{1}{\tilde{\gamma}^2 j (j+1+|z|)} + \frac{\tilde{C}}{\tilde{\gamma}^{j+1} (j+1+|z|)^{j+1}} \right] \\
&\leq& C e^{-\tilde{\beta} Re(z)} \tilde{\gamma}^{j+1} (j+1+|z|)^{j+1},
\end{eqnarray*}
which gives the result for $\tilde{u}_{0,j+1}$.

We next show the result for $\bar{u}^{BL}_{0,j+1}$, which satisfies (see eq. (\ref{ubar_ij}))
\[
\left(\bar{u}^{BL}_{0,j+1}\right)^{(4)}-\bar{b}_{0}\left(\bar{u}^{BL}_{0,j+1}\right)^{\prime \prime }=-\bar{c}_{0}\bar{u}^{BL}_{0,j-1} \; , \; j\geq 2.
\]
By the induction hypothesis, we have
\[
\vert \bar{u}^{BL}_{0,j-1} (z) \vert \leq C \bar{\gamma }^{j-1} \frac{(2(j-1)+|z|)^{2(j-1)}}{(j-1)!} e^{-\bar{\beta} Re(z)}
\]
and using Lemma \ref{lemma_aux0}, we obtain
\begin{eqnarray*}
\vert \bar{u}^{BL}_{0,j+1} (z) \vert &\leq& C e^{-\bar{\beta} Re(z)} \left[ \bar{\gamma}^{j-1} \frac{(2(j-1)+|z|)^{2j-1}}{(2j-1) (j-1)!} +
|u_{0,j+1}(0)| + |u'_{0,j+1}(0)| \right]  \notag \\
&\leq& C e^{-\bar{\beta} Re(z)} \bar{\gamma}^{j+1} \frac{(2(j+1)+|z|)^{2(j+1)}}{(j+1)!} \left[ \frac{j(j+1)}{\bar{\gamma}^2 (2j-1) (2(j+1)+|z|)^3} + \right. \\
&+& \left. \frac{\bar{C} (j+1)^{j+1}}{\bar{\gamma}^{j+1} (2(j+1)+|z|)^{2(j+1)}} \right] \\
&\leq& C e^{-\bar{\beta} Re(z)} \bar{\gamma}^{j+1} \frac{(2(j+1)+|z|)^{2(j+1)}}{(j+1)!}.
\end{eqnarray*}

We finally consider the case $i\geq 1,j\geq 0$. We first assume that the result holds for $\tilde{u}^{BL}_{i,j}, j\geq 2$ and we will establish it for $\tilde{u}^{BL}_{i,j+1}$, with $i\geq 1$ fixed (but arbitrary). By (\ref{ut_ij}), the function $\tilde{u}^{BL}_{i,j+1}$ satisfies
$$
-\tilde{b}_0\left(\tilde{u}_{i, j+1}^{B L}\right)^{\prime \prime}+\tilde{c}_0 \tilde{u}_{i, j+1}^{B L}=-\left(\tilde{u}_{i, j-1}^{B L}\right)^{(4)}+\sum_{\ell=1}^i\left\{\tilde{b}_{\ell}\left(\tilde{u}_{i-\ell, j+1}^{B L}\right)^{\prime \prime}+\tilde{b}_{\ell}^{\prime}\left(\tilde{u}_{i-\ell, j+1}^{B L}\right)^{\prime}-\tilde{c}_{\ell} \tilde{u}_{i-\ell, j+1}^{B L}\right\} .
$$
In order to use Lemma \ref{lemma_aux0}, we need to bound the right hand side above:
$$
\tilde{G}(z):=-\left(\tilde{u}_{i, j-1}^{B L}\right)^{(4)}+\sum_{\ell=1}^i\left\{\tilde{b}_{\ell}\left(\tilde{u}_{i-\ell, j+1}^{B L}\right)^{\prime \prime}+\tilde{b}_{\ell}^{\prime}\left(\tilde{u}_{i-\ell, j+1}^{B L}\right)^{\prime}-\tilde{c}_{\ell} \tilde{u}_{i-\ell, j+1}^{B L}\right\} .
$$
By the induction hypothesis and Cauchy's Integral Theorem for Derivatives, we find
$$
\begin{aligned}
|\tilde{G}(z)| \leq & \left|\left(\tilde{u}_{i, j-1}^{B L}\right)^{(4)}\right|+C \sum_{\ell=1}^i\left\{\left|\tilde{b}_{\ell}\right|\left|\left(\tilde{u}_{i-\ell, j+1}^{B L}\right)^{\prime \prime}\right|+\left|\tilde{b}_{\ell}^{\prime}\right|\left|\left(\tilde{u}_{i-\ell, j+1}^{B L}\right)^{\prime}\right|+\left|\tilde{c}_{\ell}\right|\left|\tilde{u}_{i-\ell, j+1}^{B L}\right|\right\} \\
\leq & C \tilde{\gamma}^{i+j-1} \frac{(2 i+j-1+|z|)^{2 i+j-1}}{i!} e^{-\tilde{\beta} R e(z)}+C \sum_{\ell=1}^i|z|^{\ell}\left(\gamma_{\tilde{b}}^{\ell}+\gamma_{\tilde{c}}^{\ell}\right)\left|\tilde{u}_{i-\ell, j+1}^{B L}\right| \\
\leq & C \tilde{\gamma}^{i+j-1} \frac{(2 i+j-1+|z|)^{2 i+j-1}}{i!} e^{-\tilde{\beta} R e(z)}+ \\
& +C \sum_{\ell=1}^i|z|^{\ell}\left(\gamma_{\bar{b}}^{\ell}+\gamma_{\tilde{c}}^{\ell}\right) \tilde{\gamma}^{i+j+1-\ell} \frac{(2(i-\ell)+j+1+|z|)^{2(i-\ell)+j+1}}{(i-\ell)!} e^{-\tilde{\beta} R e(z)} \\
\leq & C \tilde{\gamma}^{i+j+1} \frac{(2 i+j+1+|z|)^{2 i+j}}{(i-1)!} e^{-\tilde{\beta} R e(z)}\left\{\frac{1}{\tilde{\gamma}^2 i(2 i+j+1+|z|)}+\sum_{\ell=1}^{\infty}\left[\frac{\gamma_{\tilde{b}}^{\ell}}{\tilde{\gamma}^{\ell}}+\frac{\gamma_{\tilde{c}}^{\ell}}{\tilde{\gamma}^{\ell}}\right]\right\} .
\end{aligned}
$$
The assumptions on $\tilde{\gamma}$ allow us to infer that the geometric series converges to a bounded quantity, hence the expression in braces is also bounded and we have the necessary setup to use Lemma \ref{lemma_aux0}, which yields
\begin{eqnarray*}
\vert \tilde{u}_{i,j+1}(z) \vert &\leq& C e^{-\tilde{\beta} Re(z)} \left[ \tilde{\gamma}^{i+j+1} \frac{(2i+j+1+|z|)^{2i+j+1}}{(2i+j+1) (i-1)!}
+|{u}_{i,j+1}(0)|+|{u}'_{i,j+1}(0)| \right] \\
&\leq& C e^{-\tilde{\beta} Re(z)} \tilde{\gamma}^{i+j+1} \frac{(2i+j+1+|z|)^{2i+j+1}}{i!} \left[ \frac{i}{(2i+j+1)}+\frac{\tilde{C} i^{2i}}{\tilde{\gamma}^{j+1} (2i+j+1+|z|)^{2i+j+1}} \right] \\
&\leq& C e^{-\tilde{\beta} Re(z)} \tilde{\gamma}^{i+j+1} \frac{(2i+j+1+|z|)^{2i+j+1}}{i!}.
\end{eqnarray*}
Following the same steps as above, we can show the desired results for $\tilde{u}^{BL}_{i,0},\tilde{u}^{BL}_{i,1}, i\geq 1$. Now, we assume that the result holds for $\bar{u}^{BL}_{i,j}, j > i+1$ and we will establish it for $\bar{u}^{BL}_{i,j+1}$, with $i\geq 1$ fixed. By (\ref{ubar_ij}), the function $\bar{u}^{BL}_{i,j+1}$ satisfies
$$
\left(\bar{u}_{i, j+1}^{B L}\right)^{(4)}-\bar{b}_0\left(\bar{u}_{i, j+1}^{B L}\right)^{\prime \prime}=-\bar{c}_0 \bar{u}_{i, j-1}^{B L}+\sum_{\ell=1}^i\left\{\bar{b}_{\ell}\left(\bar{u}_{i-\ell, j+1-\ell}^{B L}\right)^{\prime \prime}+\bar{b}_{\ell}^{\prime}\left(\bar{u}_{i-\ell, j+1-\ell}^{B L}\right)^{\prime}-\bar{c}_{\ell} \bar{u}_{i-\ell, j-1-\ell}^{B L}\right\} .
$$
We bound the right hand side (which we call $\bar{G}$)  as follows, with the aid of the induction hypothesis and Cauchy's Integral Theorem for Derivatives:
$$
\begin{aligned}
& |\bar{G}(z)| \leq\left|\bar{c}_0\right|\left|\bar{u}_{i, j-1}^{B L}\right|+C \sum_{\ell=1}^i\left\{\left|\bar{b}_{\ell}\right|\left|\left(\bar{u}_{i-\ell, j+1-\ell}^{B L}\right)^{\prime \prime}\right|+\left|\bar{b}_{\ell}^{\prime}\right|\left|\left(\bar{u}_{i-\ell, j+1-\ell}^{B L}\right)^{\prime}\right|+\left|\bar{c}_{\ell}\right|\left|\bar{u}_{i-\ell, j-1-\ell}^{B L}\right|\right\} \\
& \leq C \bar{\gamma}^{i+j-1} \frac{(i+2(j-1)+|z|)^{i+2(j-1)}}{(j-1)!} e^{-\bar{\beta} R e(z)}+ \\
& +C \sum_{\ell=1}^i|z|^{\ell}\left(\gamma_{\bar{\ell}}^{\ell}\left|\bar{u}_{i-\ell, j+1-\ell}^{B L}\right|+\gamma_{\bar{c}}^{\ell}\left|\bar{u}_{i-\ell, j-1-\ell}^{B L}\right|\right) \\
& \leq C \bar{\gamma}^{i+j-1} \frac{(i+2(j-1)+|z|)^{i+2(j-1)}}{(j-1)!} e^{-\bar{\beta} R e(z)}+ \\
& +C \sum_{\ell=1}^i|z|^{\ell} \gamma_{\bar{b}}^{\ell} \bar{\gamma}^{i+j+1-2 \ell} \frac{(i-\ell+2(j+1-\ell)+|z|)^{i-\ell+2(j+1-\ell)}}{(j+1-\ell)!} e^{-\bar{\beta} R e(z)}+ \\
& +C \sum_{\ell=1}^i|z|^{\ell} \gamma_{\bar{c}}^{\ell} \bar{\gamma}^{i+j-1-2 \ell} \frac{(i-\ell+2(j-1-\ell)+|z|)^{i-\ell+2(j-1-\ell)}}{(j-1-\ell)!} e^{-\bar{\beta} R e(z)} \\
& \leq C \bar{\gamma}^{i+j+1} \frac{(i+2(j+1)+|z|)^{i+2 j+1}}{j!} e^{-\bar{\beta} R e(z)}\left\{\frac{j}{\bar{\gamma}^2(i+2(j+1)+|z|)^3}+\right. \\
& \left.+\frac{1}{\bar{\gamma}(i+2(j+1)+|z|)} \sum_{\ell=1}^{\infty} \frac{\gamma_{\bar{b}}^{\ell}}{\bar{\gamma}^{\ell}}+\frac{j(j-1)}{\bar{\gamma}^3(i+2(j+1)+|z|)^5} \sum_{\ell=1}^{\infty} \frac{\gamma_{\bar{c}}^{\ell}}{\bar{\gamma}^{\ell}}\right\} . \\
&
\end{aligned}
$$
The expression in braces is bounded, since both sums are convergent geometric series due to the assumptions on $\bar{\gamma}$, and an application of Lemma \ref{lemma_aux0} gives
$$
\begin{aligned}
\left|\bar{u}_{i, j+1}^{B L}(z)\right| \leq & C e^{-\bar{\beta} R e(z)}\left[\bar{\gamma}^{i+j+1} \frac{(i+2(j+1)+|z|)^{i+2(j+1)}}{(i+2(j+1)) j!}+\left|u_{i, j+1}(0)\right|+\left|u_{i, j+1}^{\prime}(0)\right|\right] \\
\leq & C e^{-\bar{\beta} R e(z)} \bar{\gamma}^{i+j+1} \frac{(i+2(j+1)+|z|)^{i+2(j+1)}}{(j+1)!} \times \\
& \times\left[\frac{j+1}{(i+2(j+1))}+\frac{\bar{C} i^i(j+1)^{j+1}}{\bar{\gamma}^{j+1}(i+2(j+1)+|z|)^{i+2(j+1)}}\right] \\
\leq & C e^{-\bar{\beta} R e(z)} \bar{\gamma}^{i+j+1} \frac{(i+2(j+1)+|z|)^{i+2(j+1)}}{(j+1)!} .
\end{aligned}
$$
Similarly, we can show the result for the remaining functions $\bar{u}_{i,j},i \geq 1$ and the proof is complete.
\end{proof}

Using Cauchy's Integral Theorem for Derivatives, we may show the following.

\begin{lemma}
\label{lem:Dut_ij}
Let the functions $\tilde{u}_{i,j}, \bar{u}_{i,j}$ be defined by (\ref{ut_ij})--(\ref{bcBL}). Then, there exist
positive constants $C, \tilde{K}, \bar{K}, \tilde{\gamma}, \bar{\gamma},$ independent of $\varepsilon_1, \varepsilon_2,$ such that for any $n \in \mathbb{N},$
\begin{eqnarray*}
\vert \left(\tilde{u}^{BL}_{i,j}\right)^{(n)} (z) \vert &\leq& C \tilde{K}^n \tilde{\gamma}^{i+j} \frac{(2i+j)^{2i+j}}{i!} e^{-\tilde{\beta} Re(z)} \; \forall \; z \in \mathbb{C}, \; Re(z) > 0,\\
\vert \left(\bar{u}_{i,j}\right)^{(n)} (z) \vert &\leq& C \bar{K}^n \bar{\gamma}^{i+j} \frac{(i+2j)^{i+2j}}{j!} e^{-\bar{\beta} Re(z)} \; \forall \; z \in \mathbb{C}, \; Re(z) > 0,
\end{eqnarray*}
where $\tilde{\beta}=\sqrt{\tilde{c}_{0} / \tilde{b}_{0}}$ and $\bar{\beta}=\sqrt{\bar{b}_{0}}$.
\end{lemma}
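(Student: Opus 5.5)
We obtain the derivative bounds from the pointwise bounds of Lemma \ref{lem:ut_ij} via Cauchy's Integral Theorem for Derivatives, using circles of a \emph{fixed} radius. The radius is chosen so that the polynomial growth in $|z|$ can be absorbed into the exponential factor; the decay rate itself is not weakened. We treat $\tilde{u}^{BL}_{i,j}$; the argument for $\bar{u}^{BL}_{i,j}$ is identical after interchanging the roles of $i$ and $j$ and replacing $\tilde\beta$ by $\bar\beta$.

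Fix $z$ with $Re(z)>0$ and take the circle $\partial B(z,r)$ with a radius $r>0$ to be chosen, independent of $i,j,n$ and of $\varepsilon_1,\varepsilon_2$. Since $\tilde{u}^{BL}_{i,j}$ is entire, Cauchy's formula gives
\[
\left|\left(\tilde{u}^{BL}_{i,j}\right)^{(n)}(z)\right|\le \frac{n!}{r^{n}}\max_{|\zeta-z|=r}\left|\tilde{u}^{BL}_{i,j}(\zeta)\right| .
\]
The bound of Lemma \ref{lem:ut_ij} is stated for $Re(\zeta)>0$, but its proof, through Lemma \ref{lemma_aux0}, yields it on all of $\mathbb{C}$. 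Hence we may use it on the whole circle, and on the circle
\[
e^{-\tilde\beta Re(\zeta)}\le e^{\tilde\beta r}e^{-\tilde\beta Re(z)}, \qquad (2i+j+|\zeta|)^{2i+j}\le (2i+j+r+|z|)^{2i+j}.
\]
With $m=2i+j$, what remains is to remove the $|z|$ from the polynomial factor. We use
\[
(m+r+|z|)^{m}\le (m+r)^{m}\Big(1+\tfrac{|z|}{m}\Big)^{m}\le (m+r)^{m}e^{|z|}\le e^{r}\,m^{m}e^{|z|}.
\]
For $m=0$ the factor is simply $1$, so only the case $m\ge1$ needs this estimate.

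The main obstacle is the factor $e^{|z|}$, since $|z|$ is not controlled by $Re(z)$ in the full half-plane. Because we must keep exactly $e^{-\tilde\beta Re(z)}$ and may not restrict to a sector, we have to exploit the freedom in the constants rather than in the geometry. The plan is to run the rescaling at the level of Lemma \ref{lem:ut_ij}: its estimate holds with $\tilde\gamma$ replaced by any larger constant, and the quantity $2i+j$ inside the power may be replaced by $\mu(2i+j)$ for a large fixed $\mu$. This replacement only changes $\tilde\gamma$ to $\mu^{2i+j}\tilde\gamma^{i+j}\le(\mu^2\tilde\gamma)^{i+j}$, which is again absorbed into a new $\tilde\gamma$.

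Redoing the estimate with $m$ replaced by $\mu m$ gives
\[
(\mu m+|z|)^{m}\le (\mu m)^{m}e^{|z|/\mu}.
\]
In the region $|Im z|\le C' Re(z)$ we have $e^{|z|/\mu}\le e^{\tilde\beta Re(z)/2}$ for $\mu$ large. Outside that region we instead take the circle radius proportional to $|Im z|$ and use the extra decay coming from Lemma \ref{lemma_aux0}, where $|e^{-\lambda z}|$ is bounded along vertical lines. This is the step I expect to require the most care; if it fails, we would fall back to the statement in which the $|z|$-dependent factor $(2i+j+|z|)^{2i+j}$ is retained, and then apply Cauchy's formula directly.

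Collecting the pieces gives
\[
\left|\left(\tilde{u}^{BL}_{i,j}\right)^{(n)}(z)\right|\le C\,n!\,r^{-n}\,\tilde\gamma^{i+j}\,\frac{(2i+j)^{2i+j}}{i!}\,e^{-\tilde\beta Re(z)}.
\]
Here $n!\,r^{-n}$ is read as $\tilde K^n$ up to the factorial; as in Lemma \ref{lem:u_n_ij}, the stated form is obtained by absorbing $n!$ via $\tilde K^n$ in the paper's convention.
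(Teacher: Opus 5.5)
There are two genuine gaps. The first is in your closing step. Cauchy's formula on circles of fixed radius $r$ gives the factor $n!\,r^{-n}$, and $n!$ cannot be absorbed into $\tilde K^n$ for any constant $\tilde K$. Lemma \ref{lem:u_n_ij} keeps $n!$ explicitly for exactly this reason. The present statement, by contrast, asserts geometric growth in $n$, and this is what Theorem \ref{thm:main} relies on; already $\tilde u^{BL}_{0,0}=\tilde C_{0,0}e^{-\tilde\beta z}$ has $n$-th derivative of modulus $|\tilde C_{0,0}|\tilde\beta^n e^{-\tilde\beta Re(z)}$. As you note, the proof via Lemma \ref{lemma_aux0} gives the bounds on all of $\mathbb{C}$, so you can let the radius grow with $n$, e.g.\ $r=n$. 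With $m=2i+j$:
\begin{itemize}
\item $n!\,n^{-n}\le 1$;
\item the part of the circle to the left of $z$ costs at most $e^{\tilde\beta n}$;
\item $(m+|z|+n)^m\le (m+|z|)^m e^{n}$.
\end{itemize}
Together these give $|(\tilde u^{BL}_{i,j})^{(n)}(z)|\le C e^{(1+\tilde\beta)n}\tilde\gamma^{i+j}\frac{(2i+j+|z|)^{2i+j}}{i!}e^{-\tilde\beta Re(z)}$ for all $z\in\mathbb{C}$.

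The second gap is removing $|z|$ from the polynomial factor while keeping the exact rate $\tilde\beta$, and this cannot be done at all. Your sector estimate already turns $e^{-\tilde\beta Re(z)}$ into $e^{-\tilde\beta Re(z)/2}$, contradicting your claim that the rate is not weakened. The $\mu$-rescaling can reach any rate $(1-\theta)\tilde\beta$ with $\theta>0$, but never $\theta=0$. The vertical-line idea also fails: $|e^{-\tilde\beta z}|=e^{-\tilde\beta Re(z)}$ gives no decay in $Im(z)$, and a circle of radius comparable to $|Im(z)|$ costs a factor $e^{c\tilde\beta|Im(z)|}$.

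In fact the inequality with rate exactly $\tilde\beta$ and a $z$-independent prefactor is false even on the positive real axis. By (\ref{ut_ij}), $\tilde u^{BL}_{1,0}$ solves $-\tilde b_0 v''+\tilde c_0 v=\tilde C_{0,0}\bigl[-\tilde\beta b'(0)+(\tilde\beta^2 b'(0)-c'(0))\tilde x\bigr]e^{-\tilde\beta\tilde x}$, whose right-hand side resonates with the homogeneous solution $e^{-\tilde\beta\tilde x}$. Hence, whenever $\tilde C_{0,0}(\tilde\beta^2 b'(0)-c'(0))\neq 0$, $\tilde u^{BL}_{1,0}$ contains a term $q\tilde x^2 e^{-\tilde\beta\tilde x}$ with $q\neq 0$, and $e^{\tilde\beta x}|(\tilde u^{BL}_{1,0})^{(n)}(x)|\to\infty$ as $x\to+\infty$. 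The same resonance occurs for $\bar u^{BL}_{1,1}$.

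The missing idea is to give up part of the decay and work on the real axis, which is all Theorem \ref{thm:main} needs. One has $\sup_{x\ge 0}(m+x)^m e^{-\tilde\beta x/2}\le (C_{\tilde\beta}m)^m$ with $C_{\tilde\beta}=2e^{\tilde\beta/2-1}/\tilde\beta\ge 1$, and $C_{\tilde\beta}^{2i+j}$ can be absorbed into an enlarged $\tilde\gamma^{i+j}$. The bound from the first paragraph then gives the lemma for real $x\ge 0$ with $e^{-\tilde\beta x/2}$ in place of $e^{-\tilde\beta Re(z)}$. Compare the halved rate $\sqrt{b(0)}/2$ in the analogous estimate quoted in the proof of Theorem \ref{thm:main2}. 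So your fallback of retaining $(2i+j+|z|)^{2i+j}$ is the right intermediate result. It has to be followed by this absorption, at the cost of a reduced decay rate, rather than presented as the lemma.
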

\begin{proof}
This is straight forward, see, e.g. \cite[Corollary 6]{PCarXiv}.
\end{proof}

Completely analogous results hold for the functions $\check{u}^{BL}_{i,j}, \hat{u}^{BL}_{i,j}$.

We now define, for $M \in \mathbb{N}$,
\begin{eqnarray}
u_M(x) &=& \sum_{i=0}^{M} \sum_{j=0}^{M} \varepsilon_2^i (\varepsilon_1/ \varepsilon_2^2)^j u_{i,j}(x), \label{uM} \\
\tilde{u}_{M}^{BL} (\tilde{x}) &=& \sum_{i=0}^{M} \sum_{j=0}^{M} \varepsilon_2^i (\varepsilon_1/ \varepsilon_2^2)^j \tilde{u}^{BL}_{i,j}(\tilde{x}), \label{utM} \allowdisplaybreaks \\
\bar{u}_{M}^{BL} (\bar{x}) &=& \sum_{i=0}^{M} \sum_{j=0}^{M} \varepsilon_2^i (\varepsilon_1/ \varepsilon_2^2)^j \bar{u}^{BL}_{i,j}(\bar{x}), \label{ubM} \\
\check{u}_{M}^{BL} (\check{x}) &=& \sum_{i=0}^{M} \sum_{j=0}^{M} \varepsilon_2^i (\varepsilon_1/ \varepsilon_2^2)^j \check{u}^{BL}_{i,j}(\check{x}), \label{ucM} \\
\hat{u}_{M}^{BL} (\hat{x}) &=& \sum_{i=0}^{M} \sum_{j=0}^{M} \varepsilon_2^i (\varepsilon_1/ \varepsilon_2^2)^j \hat{u}^{BL}_{i,j}(\hat{x}), \label{uhM}
\end{eqnarray}%
\begin{equation}
r_M = u -\left( u_M + \tilde{u}_M^{BL} + \bar{u}_M^{BL} + \check{u}_M^{BL} + \hat{u}_M^{BL}\right) \label{rM}
\end{equation}%
and we have the decomposition
\begin{equation}
u = u_M + \tilde{u}_M^{BL} + \bar{u}_M^{BL} + \check{u}_M^{BL} + \hat{u}_M^{BL} + r_M. \label{decomp}
\end{equation}

We finally prove the main result of the article.

\begin{theorem}\label{thm:main}
Let $u$ be the solution to (\ref{de})--(\ref{bc}) and assume (\ref{analytic})--(\ref{eps_relation}) hold. Then, there
exist positive constants $C,$ $K_1,$ $K_2,$ $\tilde{K},$ $\bar{K},$ $\check{K},$ $\hat{K},$ $\tilde{\gamma},$ $\bar{\gamma},$ $\check{\gamma},$ $\hat{\gamma},$ $\delta ,$ independent of 
$\varepsilon_1, \varepsilon_2,$ such that $\forall \; n \in \mathbb{N}$ and $\forall \; x \in \overline{I},$ there holds
\begin{equation*}
u = u_M + \tilde{u}_M^{BL} + \bar{u}_M^{BL} + \check{u}_M^{BL} + \hat{u}_M^{BL} + r_M,
\end{equation*}
and
\begin{eqnarray*}
\Vert u_M^{(n)} \Vert_{\infty, I} &\leq& C K_1^n n! ,\\
 \vert (\tilde{u}^{BL}_{M})^{(n)}(x) \vert &\leq& C \tilde{K}^n  \frac{1}{\varepsilon _{2}}\left( \frac{%
\varepsilon _{1}}{\varepsilon _{2}}\right) ^{1-n}e^{-\beta \varepsilon
_{2}x/\varepsilon _{1}},\\
 \vert (\bar{u}_{M}^{BL})^{(n)}(x) \vert &\leq& C \bar{K}^n\varepsilon _{2}^{-n}e^{-\beta x/\varepsilon _{2}},\\
\vert (\check{u}_{M}^{BL})^{(n)}(x) \vert &\leq& C \check{K}^n \frac{1}{\varepsilon _{2}}%
\left( \frac{\varepsilon _{1}}{\varepsilon _{2}}\right)
^{1-n}e^{-\beta \varepsilon _{2}(1-x)/\varepsilon _{1}} , \\
\vert (\hat{u}_{M}^{BL})^{(n)}(x) \vert &\leq& C \hat{K}^n \varepsilon
_{2}^{-n}e^{-\beta (1-x)/\varepsilon _{2}} , \\
\Vert r_M \Vert_{\infty,\partial I}+\varepsilon_2 \Vert r'_M \Vert_{\infty,\partial I} &+& \Vert r_M \Vert_{1, I} +
\varepsilon_1 \Vert r''_M \Vert_{0, I} \leq C e^{-\delta / \varepsilon _{2}},
\end{eqnarray*}%
where $M$ is chosen so that $\varepsilon_2 e^2 4M \max \{ \tilde{\gamma}, \bar{\gamma}, \check{\gamma}, \hat{\gamma}, K_2 \} < 1$ and
$(\varepsilon_1 / \varepsilon_2^2) e^2 4M \max \{ \tilde{\gamma}, \bar{\gamma}, \check{\gamma}, \hat{\gamma}, K_2 \} < 1$. The constant $\beta$ is given by $\beta = \min \left\{ \sqrt{\tilde{c}_{0} / \tilde{b}_{0}},\sqrt{\bar{b}_{0}} ,
\sqrt{\check{c}_{0} / \check{b}_{0}},\sqrt{\hat{b}_{0}} \right\}$.
\end{theorem}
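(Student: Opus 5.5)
The decomposition (\ref{decomp}) holds by the definition (\ref{rM}) of $r_M$. It therefore remains to bound each piece of (\ref{decomp}) separately, the remainder last.

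\textbf{Smooth part.} By Lemma \ref{lem:u_n_ij},
\[
\|u_M^{(n)}\|_{\infty,I}\le C n! K_1^n \sum_{i,j\le M}\varepsilon_2^i(\varepsilon_1/\varepsilon_2^2)^j i^i K_2^i .
\]
Since $i\le M$, we have $i^i\le M^i$, so the sum is dominated by
\[
\sum_i (\varepsilon_2 M K_2)^i\,\sum_j (\varepsilon_1/\varepsilon_2^2)^j .
\]
Both are geometric series with ratio below $1/(4e^2)$ by the choice of $M$, so they are bounded independently of $\varepsilon_1,\varepsilon_2$. This gives $\|u_M^{(n)}\|_{\infty,I}\le CK_1^n n!$.

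\textbf{Layer parts.} For the layers, I use the chain rule in the stretched variables. The factor $\varepsilon_2^{-n}$ comes from $\tilde x$, and the factor $(\varepsilon_2/\varepsilon_1)^n$ comes from the fast variable of the $\varepsilon_1/\varepsilon_2$-scale layer. For each $(i,j)$ I apply Lemma \ref{lem:Dut_ij} and then bound $(2i+j)^{2i+j}/i!$, or $(i+2j)^{i+2j}/j!$, by $(3M)^{i+j}e^{3M}$-type quantities. More sharply, I use $(2i+j)^{2i+j}/i!\le C(4e^2M)^{i+j}$ up to factors absorbable into geometric decay. The double sums then again converge under the stated smallness conditions on $\varepsilon_2 e^2 4M\tilde\gamma$ and $(\varepsilon_1/\varepsilon_2^2)e^24M\tilde\gamma$. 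The exponential factor $e^{-\tilde\beta\,\mathrm{Re}(z)}$, evaluated at the real stretched point, produces the stated exponential with $\beta\le\tilde\beta,\bar\beta$. The prefactors $\frac{1}{\varepsilon_2}(\varepsilon_1/\varepsilon_2)^{1-n}$ are matched as in the O'Malley bounds recalled in Section \ref{model}: the layer whose stretching produces $(\varepsilon_2/\varepsilon_1)^n$ is assigned the bound with the extra factor $\varepsilon_1/\varepsilon_2^2\le1$, which only weakens it. The right-endpoint terms $\check u^{BL}_M,\hat u^{BL}_M$ are handled identically.

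\textbf{Remainder.} This is the main obstacle. I would compute $L_{\varepsilon_1,\varepsilon_2}r_M=f-L_{\varepsilon_1,\varepsilon_2}(u_M+\text{layers})$. By construction of (\ref{uij}), (\ref{ut_ij}) and (\ref{ubar_ij}), all terms of order $\varepsilon_2^i(\varepsilon_1/\varepsilon_2^2)^j$ with $i,j\le M$ cancel. What survives consists of:
\begin{itemize}
\item truncation terms with $i$ or $j$ equal to $M+1$ or $M+2$;
\item Taylor tails of $b,c$ in the fast variables.
\end{itemize}
For the smooth residual, Lemma \ref{lem:u_n_ij} gives a bound $C(\varepsilon_2 MK_2)^M+C(\varepsilon_1M K_2/\varepsilon_2^2)^M$. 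Since $M\sim1/\varepsilon_2$ and the ratio is below $e^{-2}$, this is $\lesssim e^{-\delta/\varepsilon_2}$. The layer residuals are handled by the same bounds, using Lemma \ref{lem:ut_ij} on a neighbourhood where the Taylor expansions converge. Alternatively one writes the residual via exact coefficients, restricting to $|x|$ small and using the exponential decay elsewhere. Here I worry that, with $M$ tied to $\varepsilon_2$ only, the $\varepsilon_1$-truncation is merely comparably small; I would impose both conditions and take $\delta$ as the minimum.

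The boundary values of $r_M$ come from the layer terms of the opposite endpoint, such as $\tilde u^{BL}_M$ evaluated at $x=1$. These are exponentially small, of size $e^{-\beta/\varepsilon_2}$, together with the unmatched boundary conditions at the top orders. Finally, I would apply the energy estimate for $L_{\varepsilon_1,\varepsilon_2}$ on $H^2$, after lifting the small boundary data by a smooth function. Coercivity of the bilinear form
\[
\varepsilon_1^2(v'',v'')+\varepsilon_2^2(bv',v')+(cv,v),
\]
which follows from (\ref{data}), then yields the bound on $\|r_M\|_{1,I}+\varepsilon_1\|r_M''\|_{0,I}$ by the residual and boundary data, hence by $Ce^{-\delta/\varepsilon_2}$.
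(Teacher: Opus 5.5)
Your smooth-part bound and your plan for the remainder follow the paper's route, and are in places more careful than it (Taylor tails, lifting). That plan is: compute the residual of $L_{\varepsilon_1,\varepsilon_2}r_M$, use the opposite-endpoint layer values on $\partial I$, then apply coercivity. The genuine gap is the narrow-layer prefactor. Since
\[
\frac{1}{\varepsilon_2}\Bigl(\frac{\varepsilon_1}{\varepsilon_2}\Bigr)^{1-n}=\frac{\varepsilon_1}{\varepsilon_2^2}\Bigl(\frac{\varepsilon_2}{\varepsilon_1}\Bigr)^{n},
\]
the target is the chain-rule bound $(\varepsilon_2/\varepsilon_1)^n$ \emph{multiplied} by $\varepsilon_1/\varepsilon_2^2\ll 1$. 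It is therefore strictly stronger than what Lemma~\ref{lem:Dut_ij} plus the chain rule give, not weaker. For $n=1$ you would have to show that the narrow layer has slope $O(\varepsilon_2^{-1})$, while you only have $O(\varepsilon_2/\varepsilon_1)$. Lemma~\ref{lem:Dut_ij} contains no such smallness: its $(i,j)=(0,0)$ bound is $O(1)$. Moreover, with (\ref{bcBL}) read literally, the constant $\bar C_{0,0}$ in the proof of Lemma~\ref{lem:ut_ij} is generically nonzero, and then the narrow layer's slope at $x=0$ is genuinely of order $\varepsilon_2/\varepsilon_1$. The paper's own argument stops at a bound in the stretched variable and then ``adjusts the constant'', so it does not supply this factor either. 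It is a missing ingredient, not a routine detail.

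What you need is that the narrow layer carries an overall factor $\varepsilon_1/\varepsilon_2^2$. This comes from imposing the slope condition at $x=0$ in the original variable, with chain-rule factors $\varepsilon_2^{-1}$ for the wide layer and $\varepsilon_2/\varepsilon_1=\varepsilon_2^{-1}(\varepsilon_1/\varepsilon_2^2)^{-1}$ for the narrow one. The coefficient of $\varepsilon_2^{i-1}(\varepsilon_1/\varepsilon_2^2)^{-1}$ then forces $(\bar u^{BL}_{i,0})'(0)=0$. The $j=0$ equations in (\ref{ubar_ij}) are homogeneous and their decaying solutions are multiples of $e^{-\sqrt{\bar b_0}\,\bar x}$, so $\bar u^{BL}_{i,0}\equiv 0$. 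The narrow-layer sum then starts at $j=1$, and your geometric-series argument produces the factor. The remaining slope conditions couple $(i,j)$ to $(i+1,j)$ and $(i+1,j+1)$, an index shift like $\tilde U_j'(0)=-U_{j-1}'(0)$ in the proof of Theorem~\ref{thm:main2}. Lemma~\ref{lem:ut_ij} therefore has to be re-proved under these conditions.

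Separately, your worry about the remainder is well founded, and taking a minimum of constants does not fix it. The residual is only bounded by $Ce^{-cM}$, and the second condition on $M$ allows $M\sim\varepsilon_2^{-1}$ only when $\varepsilon_1\lesssim\varepsilon_2^3$. Otherwise the argument yields $e^{-c\,\varepsilon_2^2/\varepsilon_1}$, not $e^{-\delta/\varepsilon_2}$; the paper's proof is silent on this as well.
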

\begin{proof}
From (\ref{uM}) and Lemma \ref{lem:u_n_ij} we have
\begin{eqnarray*}
\Vert u^{(n)}_M \Vert_{\infty, I} &\leq& \sum_{i=0}^M \sum_{j=0}^M \varepsilon_2^i (\varepsilon_1 / \varepsilon_2^2)^j
\Vert u^{(n)}_{i,j} \Vert_{\infty, I} \leq C \sum_{i=0}^M \sum_{j=0}^M \varepsilon_2^i (\varepsilon_1 / \varepsilon_2^2)^j  n ! K_1^n i^i K_2^i \\
&\leq& C K_1^n n! \left( \sum_{i=0}^M \varepsilon_2^i i^i K_2^i\right) \left( \sum_{j=0}^M (\varepsilon_1 / \varepsilon_2^2)^j\right) \\
&\leq& C K_1^n n! \left( \sum_{i=0}^{\infty} (\varepsilon_2 M K_2)^i\right) \left( \sum_{j=0}^{\infty} (\varepsilon_1 / \varepsilon_2^2)^j \right) \\
&\leq& C K_1^n n! ,
\end{eqnarray*}
since both sums are convergent geometric series due to the assumptions $\varepsilon_2 M K_2 < 1$ and 
$\varepsilon_1 < \varepsilon_2^2$. 

We next show the result for $\tilde{u}_M^{BL}$. We have from (\ref{utM}) and Lemma \ref{lem:Dut_ij}
\begin{eqnarray*}
\vert (\tilde{u}^{BL}_M)^{(n)} (\tilde{x}) \vert  &\leq& \sum_{i=0}^M \sum_{j=0}^M \varepsilon_2^i (\varepsilon_1 / \varepsilon_2^2)^j \left\vert \left(\tilde{u}^{BL}_{i,j}\right)^{(n)}(\tilde{x}) \right\vert \\
&\leq& C \tilde{K}^n  \sum_{i=0}^M \sum_{j=0}^M \varepsilon_2^i (\varepsilon_1 / \varepsilon_2^2)^j   \tilde{\gamma}^{i+j} \frac{(2i+j)^{2i+j}}{i!} e^{-\beta \tilde{x} } .
\end{eqnarray*}
Using the inequality
\[
(2i+j)^{2i+j} \leq (2i)^{2i} e^{2i} j^j e^j ,
\]
we get 
\begin{eqnarray*}
\vert (\tilde{u}^{BL}_M)^{(n)} (\tilde{x}) \vert  &\leq& C (\tilde{K} e)^n e^{-\beta \tilde{x}} \left( \sum_{i=0}^M \frac{\varepsilon_2^i (2i)^{2i} \tilde{\gamma}^i e^{2i}}{i!} \right) \left( \sum_{j=0}^M (\varepsilon_1 / \varepsilon_2^2)^j j^j \tilde{\gamma}^j e^j \right) \\
&\leq& C (\tilde{K} e)^n e^{-\beta \tilde{x}} \left( \sum_{i=0}^{\infty} (\varepsilon_2 4M \tilde{\gamma} e^2)^i \right) \left( \sum_{j=0}^{\infty} ((\varepsilon_1 / \varepsilon_2^2) M \tilde{\gamma} e)^j \right) \\
&\leq& C (\tilde{K} e)^n e^{-\beta \tilde{x}},
\end{eqnarray*}
since both sums are convergent geometric series due to the assumptions $\varepsilon_2 4M \tilde{\gamma} e^2 < 1$ and $(\varepsilon_1 / \varepsilon_2^2) M \tilde{\gamma} e < 1$. The result for $\tilde{u}_{M}^{BL}$ follows by adjusting the constant $\tilde{K}$.

Similarly, we show the result for $\bar{u}_{M}^{BL}$. By (\ref{ubM}) and Lemma \ref{lem:Dut_ij}, we have
\begin{eqnarray*}
\vert (\bar{u}^{BL}_M)^{(n)} (\bar{x}) \vert  &\leq& \sum_{i=0}^M \sum_{j=0}^M \varepsilon_2^i (\varepsilon_1 / \varepsilon_2^2)^j \vert \bar{u}^{(n)}_{i,j}(\bar{x}) \vert \\
&\leq& C \bar{K}^n \sum_{i=0}^M \sum_{j=0}^M \varepsilon_2^i (\varepsilon_1 / \varepsilon_2^2)^j \bar{\gamma}^{i+j} \frac{(i+2j)^{i+2j}}{j!} e^{-\beta \bar{x} } .
\end{eqnarray*}
Using the inequality
\[
(i+2j)^{i+2j}  \leq i^i e^i (2j)^{2j} e^{2j},
\]
we obtain 
\begin{eqnarray*}
\vert (\bar{u}^{BL}_M)^{(n)} (\bar{x}) \vert  &\leq& C \bar{K}^n e^{-\beta \bar{x}} \left( \sum_{i=0}^M \varepsilon_2^i i^i \bar{\gamma}^i e^i \right) \left( \sum_{j=0}^M (\varepsilon_1 / \varepsilon_2^2)^j \frac{(2j)^{2j}}{j!} \bar{\gamma}^j e^{2j} \right) \\
&\leq& C \bar{K}^n e^{-\beta \bar{x}} \left( \sum_{i=0}^{\infty} (\varepsilon_2 M \bar{\gamma} e)^i \right) \left( \sum_{j=0}^{\infty} ((\varepsilon_1 / \varepsilon_2^2) 4M \bar{\gamma} e^2)^j \right) \\
&\leq& C \bar{K}^n e^{-\beta \bar{x}},
\end{eqnarray*}
where we used the assumptions on $M$ to see that the geometric series are bounded.

Analogous results hold for the functions $\check{u}_M^{BL}(\check{x})$ and $\hat{u}_M^{BL}(\hat{x})$, hence it remains to show the bound for the remainder, $r_M$. We note that
\begin{eqnarray*}
r_M(0) &=& u(0) - \left[ \sum_{i=0}^{M} \sum_{j=0}^{M} \varepsilon _{2}^{i} \left( \varepsilon _{1} / \varepsilon _{2}^{2} \right)^{j} \left( u_{i,j}(0) + \tilde{u}^{BL}_{i,j}(0) + \bar{u}^{BL}_{i,j}(0) + \check{u}^{BL}_{i,j}\left( \frac{1}{\varepsilon _{2}} \right) + \hat{u}^{BL}_{i,j}\left( \frac{\varepsilon _{2}}{\varepsilon _{1}} \right) \right) \right] \\
&=& - \sum_{i=0}^{M} \sum_{j=0}^{M} \varepsilon _{2}^{i} \left( \varepsilon _{1} / \varepsilon _{2}^{2} \right)^{j} \left( \check{u}^{BL}_{i,j}\left( \frac{1}{\varepsilon _{2}} \right) + \hat{u}^{BL}_{i,j}\left( \frac{\varepsilon _{2}}{\varepsilon _{1}} \right) \right)
\end{eqnarray*}
and we further get
\begin{eqnarray*}
|r_M(0)| &\leq& \sum_{i=0}^{M} \sum_{j=0}^{M} \varepsilon _{2}^{i} \left( \varepsilon _{1} / \varepsilon _{2}^{2} \right)^{j} \left[ \left\vert \check{u}^{BL}_{i,j}\left( \frac{1}{\varepsilon _{2}} \right) \right\vert + \left\vert \hat{u}^{BL}_{i,j}\left( \frac{\varepsilon _{2}}{\varepsilon _{1}} \right) \right\vert \right] \\
&\leq& C \sum_{i=0}^{M} \sum_{j=0}^{M} \varepsilon _{2}^{i} \left( \varepsilon _{1} / \varepsilon _{2}^{2} \right)^{j} \left[ \check{\gamma}^{i+j} \frac{(2i+j)^{2i+j}}{i!} e^{-\beta / \varepsilon _{2}} + \hat{\gamma}^{i+j} \frac{(i+2j)^{i+2j}}{j!} e^{-\beta \varepsilon _{2} / \varepsilon _{1}} \right] \\
&\leq& C e^{-\beta / \varepsilon _{2}} \left( \sum_{i=0}^{\infty} (\varepsilon_2 4M \check{\gamma} e^2)^i \right) \left( \sum_{j=0}^{\infty} ((\varepsilon_1 / \varepsilon_2^2) M \check{\gamma} e)^j \right) + \\
&+& C e^{-\beta \varepsilon _{2} / \varepsilon _{1}} \left( \sum_{i=0}^{\infty} (\varepsilon_2 M \hat{\gamma} e)^i \right) \left( \sum_{j=0}^{\infty} ((\varepsilon_1 / \varepsilon_2^2) 4M \hat{\gamma} e^2)^j \right) \\
&\leq& C \left\{ e^{-\beta / \varepsilon _{2}} + e^{-\beta \varepsilon _{2} / \varepsilon _{1}} \right\}.
\end{eqnarray*}
Similarly,
\begin{eqnarray*}
|r_M(1)| &\leq& \sum_{i=0}^{M} \sum_{j=0}^{M} \varepsilon _{2}^{i} \left( \varepsilon _{1} / \varepsilon _{2}^{2} \right)^{j} \left[ \left\vert \tilde{u}^{BL}_{i,j}\left( \frac{1}{\varepsilon _{2}} \right) \right\vert + \left\vert \bar{u}^{BL}_{i,j}\left( \frac{\varepsilon _{2}}{\varepsilon _{1}} \right) \right\vert \right] \\
&\leq& C \sum_{i=0}^{M} \sum_{j=0}^{M} \varepsilon _{2}^{i} \left( \varepsilon _{1} / \varepsilon _{2}^{2} \right)^{j} \left[ \tilde{\gamma}^{i+j} \frac{(2i+j)^{2i+j}}{i!} e^{-\beta / \varepsilon _{2}} + \bar{\gamma}^{i+j} \frac{(i+2j)^{i+2j}}{j!} e^{-\beta \varepsilon _{2} / \varepsilon _{1}} \right] \\
&\leq& C e^{-\beta / \varepsilon _{2}} \left( \sum_{i=0}^{\infty} (\varepsilon_2 4M \tilde{\gamma} e^2)^i \right) \left( \sum_{j=0}^{\infty} ((\varepsilon_1 / \varepsilon_2^2) M \tilde{\gamma} e)^j \right) + \\
&+& C e^{-\beta \varepsilon _{2} / \varepsilon _{1}} \left( \sum_{i=0}^{\infty} (\varepsilon_2 M \bar{\gamma} e)^i \right) \left( \sum_{j=0}^{\infty} ((\varepsilon_1 / \varepsilon_2^2) 4M \bar{\gamma} e^2)^j \right) \\
&\leq& C \left\{ e^{-\beta / \varepsilon _{2}} + e^{-\beta \varepsilon _{2} / \varepsilon _{1}} \right\}.
\end{eqnarray*}
Combining the two results, we have
\begin{equation*}
\left\Vert r_M \right\Vert _{\infty ,\partial I} \leq C \max \{ e^{-\beta / \varepsilon _{2}},e^{-\beta \varepsilon _{2} / \varepsilon _{1}} \}.
\end{equation*}
In a completely analogous way, we may show
\begin{equation*}
\left\Vert r_M^{\prime } \right\Vert _{\infty ,\partial I} \leq C \varepsilon _2^{-1} \max \{ e^{-\beta / \varepsilon _{2}},e^{-\beta \varepsilon _{2} / \varepsilon _{1}} \}.
\end{equation*}
Now, the remainder $r_M$ satisfies
\begin{eqnarray*}
L_{\varepsilon_1, \varepsilon_2} r_M &=& L_{\varepsilon_1, \varepsilon_2} u - L_{\varepsilon_1, \varepsilon_2} u_M 
- L_{\varepsilon_1, \varepsilon_2} \left( \tilde{u}_M^{BL} + \bar{u}_M^{BL} + \check{u}_M^{BL} + \hat{u}_{M}^{BL} \right)  \\
&=& f - L_{\varepsilon_1, \varepsilon_2} u_M -L_{\varepsilon_1, \varepsilon_2} \left( \tilde{u}_M^{BL} + \bar{u}_M^{BL} + \check{u}_M^{BL} + \hat{u}_{M}^{BL} \right).
\end{eqnarray*}
We first consider
\[
f - L_{\varepsilon_1, \varepsilon_2} u_M = f - \sum_{i=0}^M \sum_{j=0}^M \varepsilon_2^i (\varepsilon_1 / \varepsilon_2^2)^j
L_{\varepsilon_1, \varepsilon_2} u_{i,j}
\]
with $u_{i,j}$ satisfying (\ref{uij}). After some calculations, we find
\[
f - L_{\varepsilon_1, \varepsilon_2} u_M = - \varepsilon_2^{M+1} \sum_{j=2}^{M+2} (\varepsilon_1 / \varepsilon_2^2)^j u_{M-3,j-2}^{(4)} - \varepsilon_2^{M+2} \sum_{j=2}^{M+2} (\varepsilon_1 / \varepsilon_2^2)^j u_{M-2,j-2}^{(4)} -
\]%
\[
- \varepsilon_2^{M+3} \sum_{j=2}^{M+2} (\varepsilon_1 / \varepsilon_2^2)^j u_{M-1,j-2}^{(4)} - \varepsilon_2^{M+4} \sum_{j=2}^{M+2} (\varepsilon_1 / \varepsilon_2^2)^j u_{M,j-2}^{(4)} - (\varepsilon_1 / \varepsilon_2^2)^{M+1} \sum_{i=4}^M \varepsilon_2^i u_{i-4,M-1}^{(4)} -
\]%
\[
- (\varepsilon_1 / \varepsilon_2^2)^{M+2} \sum_{i=4}^M \varepsilon_2^i u_{i-4,M}^{(4)} + \varepsilon_2^{M+1} \sum_{j=0}^M (\varepsilon_1 / \varepsilon_2^2)^j \left( b u_{M-1,j}^{\prime } \right) ^{\prime } + \varepsilon_2^{M+2} \sum_{j=0}^M (\varepsilon_1 / \varepsilon_2^2)^j \left( b u_{M,j}^{\prime } \right) ^{\prime }.
\]%
Hence,
\begin{eqnarray*}
\Vert f - L_{\varepsilon_1, \varepsilon_2} u_M \Vert _{\infty ,I} &\lesssim& \varepsilon_2^{M+1} \sum_{j=2}^{M+2} (\varepsilon_1 / \varepsilon_2^2)^j \Vert u_{M-3,j-2}^{(4)} \Vert _{\infty ,I} + (\varepsilon_1 / \varepsilon_2^2)^{M+1} \sum_{i=4}^M \varepsilon_2^i \Vert u_{i-4,M-1}^{(4)} \Vert _{\infty ,I} + \\
&+& \varepsilon_2^{M+1} \sum_{j=0}^M (\varepsilon_1 / \varepsilon_2^2)^j \Vert b \Vert _{\infty ,I} \Vert u_{M-1,j}^{\prime } \Vert _{\infty ,I},
\end{eqnarray*}
where we used Cauchy's Integral Theorem for Derivatives. Using Lemma \ref{lem:u_n_ij}, we further obtain
\begin{eqnarray*}
\Vert f - L_{\varepsilon_1, \varepsilon_2} u_M \Vert _{\infty ,I} &\lesssim& \varepsilon_2^{M+1} (M-3)^{M-3} K_2^{M-3} \sum_{j=2}^{M+2} (\varepsilon_1 / \varepsilon_2^2)^j +  \\
&+& (\varepsilon_1 / \varepsilon_2^2)^{M+1} (M-4)^{M-4} K_2^{M-4} \sum_{i=4}^M \varepsilon_2^i + \\
&+& \varepsilon_2^{M+1} (M-1)^{M-1} K_2^{M-1} \sum_{j=0}^M (\varepsilon_1 / \varepsilon_2^2)^j \\
&\lesssim& \varepsilon_2 (\varepsilon_2 M K_2)^M + (\varepsilon_1 / \varepsilon_2^2) \left( (\varepsilon_1 / \varepsilon_2^2) M K_2 \right) ^M,
\end{eqnarray*}
since the finite sums can be bounded by converging geometric series.

Next, we consider $L_{\varepsilon_1, \varepsilon_2} \tilde{u}_M^{BL}$ and using (\ref{ut_ij}) and Lemma \ref{lem:Dut_ij}, we have
\begin{eqnarray*}
\Vert L_{\varepsilon_1, \varepsilon_2} \tilde{u}_M^{BL} \Vert _{\infty ,I} &=& \sum_{i=0}^M \sum_{j=0}^M \varepsilon_2^i (\varepsilon_1 / \varepsilon_2^2)^j \Vert L_{\varepsilon_1, \varepsilon_2} \tilde{u}^{BL}_{i,j} \Vert _{\infty ,I} \\
&\lesssim& (\varepsilon_1 / \varepsilon_2^2)^{M+1} \sum_{i=0}^M \varepsilon_2^i \Vert \left(\tilde{u}^{BL}_{i,M-1}\right)^{(4)} \Vert _{\infty ,I} \\
&\lesssim& (\varepsilon_1 / \varepsilon_2^2)^{M+1} \sum_{i=0}^M \varepsilon_2^i \tilde{\gamma }^{i+M-1} \frac{(2i+M-1)^{2i+M-1}}{i!} \\
&\lesssim& (\varepsilon_1 / \varepsilon_2^2)^{M+1} \sum_{i=0}^M \varepsilon_2^i \tilde{\gamma }^{i+M-1} (4i)^{i} e^{2i} (M-1)^{M-1} e^{M-1} \\
&\lesssim& \left( (\varepsilon_1 / \varepsilon_2^2) \tilde{\gamma } e M \right) ^{M+1} \sum_{i=0}^M (\varepsilon_2 4M \tilde{\gamma } e^2)^i \\
&\lesssim& \left( (\varepsilon_1 / \varepsilon_2^2) \tilde{\gamma } e M \right) ^{M+1}.
%&+& \varepsilon_2^{M+1} \sum_{\ell =1}^M \left\{ \Vert \tilde{b}_{\ell } \Vert _{\infty ,I} \Vert \tilde{u}_{M+1-\ell ,0}^{\prime } \Vert _{\infty ,I} + \Vert \tilde{c}_{\ell } \Vert _{\infty ,I} \Vert \tilde{u}_{M+1-\ell ,0} \Vert _{\infty ,I} \right\} \\
\end{eqnarray*}
Similarly, for $L_{\varepsilon_1, \varepsilon_2} \bar{u}_M^{BL}$ we have
\begin{eqnarray*}
\Vert L_{\varepsilon_1, \varepsilon_2} \bar{u}_M^{BL} \Vert _{\infty ,I} &=& \sum_{i=0}^M \sum_{j=0}^M \varepsilon_2^i (\varepsilon_1 / \varepsilon_2^2)^j \Vert L_{\varepsilon_1, \varepsilon_2} \bar{u}^{BL}_{i,j} \Vert _{\infty ,I} \\
&\lesssim& \varepsilon_2^{M+1} \sum_{j=0}^M \left( \frac{\varepsilon_1}{\varepsilon_2^2}\right)^j \Vert \left(\bar{u}_{M-1,j}^{BL}\right)^{(4)}\Vert_{\infty,I} \\
&\lesssim& \varepsilon_2^{M+1} \sum_{j=0}^M \left( \frac{\varepsilon_1}{\varepsilon_2^2}\right)^j 
 \bar{\gamma }^{j+M-1} \frac{(2j+M-1)^{2j+M-1}}{j!} \\
&\lesssim& \varepsilon_2^{M+1} \sum_{j=0}^M \left( \frac{\varepsilon_1}{\varepsilon_2^2}\right)^j 
 \bar{\gamma }^{j+M-1} (4j)^{j} e^{2j} (M-1)^{M-1} e^{M-1} \\
&\lesssim& \left( \varepsilon_2 \bar{\gamma } e M \right) ^{M+1} \sum_{j=0}^M \left(\frac{\varepsilon_1}{\varepsilon^2_2} 4M \bar{\gamma } e^2\right)^j\\
&\lesssim& \left( \varepsilon_2 \bar{\gamma } e M \right) ^{M+1}.
\end{eqnarray*}
Similar results hold for the functions $\check{u}^{BL}_{M}, \hat{u}^{BL}_{M}$. Therefore, combining all of the above we have
\begin{eqnarray*}
\Vert L_{\varepsilon_1, \varepsilon_2} r_M \Vert _{\infty ,I} &=& \Vert f - L_{\varepsilon_1, \varepsilon_2} u_M -L_{\varepsilon_1, \varepsilon_2} \left( \tilde{u}_M^{BL} + \bar{u}_M^{BL} + \check{u}_M^{BL} + \hat{u}_{M}^{BL} \right) \Vert _{\infty ,I} \\
&\leq& \Vert f - L_{\varepsilon_1, \varepsilon_2} u_M \Vert _{\infty ,I} + \Vert L_{\varepsilon_1, \varepsilon_2} \tilde{u}_M^{BL} \Vert _{\infty ,I} + \Vert L_{\varepsilon_1, \varepsilon_2} \bar{u}_M^{BL} \Vert _{\infty ,I} + \\
&+& \Vert L_{\varepsilon_1, \varepsilon_2} \check{u}_M^{BL} \Vert _{\infty ,I} + \Vert L_{\varepsilon_1, \varepsilon_2} \hat{u}_M^{BL} \Vert _{\infty ,I} \\
&\lesssim&  \varepsilon_2 (\varepsilon_2 M K_2)^M + (\varepsilon_1 / \varepsilon_2^2) \left( (\varepsilon_1 / \varepsilon_2^2) M K_2 \right) ^M +
 \left( \varepsilon_2 \bar{\gamma } e M \right) ^{M+1} \\
&+& \left( (\varepsilon_1 / \varepsilon_2^2) \hat{\gamma } e M \right) ^{M+1} + \left( \varepsilon_2 \hat{\gamma } e M \right) ^{M+1},
\end{eqnarray*}
from which the desired result follows.
\end{proof}

We close with the following alternative regularity result which will be needed for the numerical analysis of a high order Finite Element Method.

\begin{theorem}\label{thm:main2}
Let $u$ be the solution to (\ref{de})--(\ref{bc}) and assume (\ref{analytic}) and (\ref{data}) hold. Then, there
exist positive constants $C,$ $K_1,$ $\tilde{K},$ $\hat{K},$  $\tilde{\gamma},$ $\hat{\gamma},$ $\delta ,$ independent of $\varepsilon_1, \varepsilon_2,$ such that $\forall \; n \in \mathbb{N}$ and $\forall \; x \in \overline{I},$ there holds
\begin{equation*}
u = u_M + \tilde{u}_M^{BL} + \hat{u}_M^{BL} + r_M,
\end{equation*}
and
\begin{eqnarray*}
\Vert u_M^{(n)} \Vert_{\infty, I} &\leq& C K_1^n \eps_2^{-1}\max \{ n, \eps^{-1}_2 \}^n  ,\\
 \vert (\tilde{u}^{BL}_{M})^{(n)}(x) \vert &\leq& C \tilde{K}^n \left( \frac{\varepsilon _{1}}{\varepsilon _{2}}\right) ^{-n}e^{-\beta \varepsilon
_{2}x/\varepsilon _{1}},\\
\vert (\hat{u}_{M}^{BL})^{(n)}(x) \vert &\leq& C \hat{K}^n \varepsilon
_{2}^{-n}e^{-\beta (1-x)/\varepsilon _{2}} , \\
\eps_2^{-1}\left\Vert r_M\right\Vert _{0,\Omega }+\varepsilon_2 \Vert r'_M \Vert_{0,\Omega} + \varepsilon_2^3 \Vert r''_M \Vert_{0,\Omega} &\leq& {C}  e^{-\delta \eps_2 /\eps_1},
\end{eqnarray*}%
where $M$ is chosen so that 
$(\varepsilon_1 / \varepsilon_2^2) e^2 4M \max \{ \tilde{\gamma}, \hat{\gamma}\} < 1$. The constant $\beta$ is given by $\beta = \min \left\{ \sqrt{\tilde{c}_{0} / \tilde{b}_{0}}, \sqrt{\hat{c}_{0} / \hat{b}_{0}},\sqrt{\hat{b}_{0}} \right\}$.
\end{theorem}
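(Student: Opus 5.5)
Theorem~\ref{thm:main2} covers the case where $\eps_2$ is not assumed small, so the $\eps_2$-layers of width $\eps_2$ are \emph{not} resolved as separate layer terms. They are absorbed into the smooth part $u_M$, and only the thin layer scale $\eps_1/\eps_2$ is kept explicitly. Accordingly, I would redefine the decomposition with a single expansion parameter $\mu:=\eps_1/\eps_2^2$. The smooth part is $u_M=\sum_{j=0}^M \mu^j w_j$, where each $w_j$ solves a full second-order reaction--diffusion problem in the slow variable:
\begin{equation*}
-\eps_2^2(b w_j')'+c\,w_j=F_j, \qquad F_0=f,\quad F_1=0,\quad F_j=-\eps_2^4 w_{j-2}^{(4)}\ (j\ge 2).
\end{equation*}
These are supplemented with Dirichlet data $w_j(0)=w_j(1)=0$ (or data chosen to absorb the thin layers' values). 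The thin layer terms $\tilde u^{BL}_M,\hat u^{BL}_M$ are sums $\sum_j \mu^j(\cdot)_j$ of solutions of $\phi^{(4)}-\bar b_0\phi''=\dots$ in the stretched variable $\eps_2 x/\eps_1$. Their right-hand sides come from the variable coefficients expanded in $\eps_1/\eps_2$, and they are coupled at the boundary only through the derivative condition, so that $u_M'+(\text{layers})'$ vanishes at $0$ and $1$.

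\textbf{Step 1 (smooth part).} For each $w_j$ I would invoke the analytic regularity for one-parameter second order reaction--diffusion problems, namely \cite{melenk} (Theorem 7.2.x there): $\|w_j^{(n)}\|_{\infty,I}\le C K^n\max\{n,\eps_2^{-1}\}^n\cdot(\text{data size})$. The data size is controlled by analyticity of $F_j$ in a complex neighbourhood, as in Lemma~\ref{lem:complex}. Since $F_j$ involves $\eps_2^4 w_{j-2}^{(4)}$ and $\|w_{j-2}^{(4)}\|\lesssim K^4\eps_2^{-4}$, the growth per step is a factor $CK^4$. Hence $\|w_j^{(n)}\|\lesssim \gamma^j j^j K^n\max\{n,\eps_2^{-1}\}^n$, which I would prove by the same nested-domain Cauchy estimate with $\tau=1/(j+1)$ as in Lemma~\ref{lem:complex}. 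Summing over $j$ with $\mu\, eM\gamma<1$ gives the bound on $u_M^{(n)}$. I expect the extra factor $\eps_2^{-1}$ in the statement to arise from the boundary values of the derivative data fed to the layers.

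\textbf{Step 2 (layers).} The thin-layer functions are handled exactly as $\bar u^{BL}_{i,j}$ in Lemma~\ref{lem:ut_ij}, using the $w$-part of Lemma~\ref{lemma_aux0} alone with boundary datum $\alpha_2=-w_j'(0)$. This datum is $O(\eps_2^{-1}\gamma^j j^j)$, because $w_j'$ has an $\eps_2$-layer. Lemma~\ref{lem:Dut_ij} then yields derivative bounds in the stretched variable. Converting via $d/dx=(\eps_2/\eps_1)d/d\bar x$ and using $\eps_1/\eps_2\cdot\eps_2^{-1}\lesssim \eps_1/\eps_2^2\le$ const produces the stated factor $(\eps_1/\eps_2)^{-n}$. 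The right-endpoint term $\hat u^{BL}_M$ is treated identically, with the decay rate measured as stated.

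\textbf{Step 3 (remainder).} Here $L_{\eps_1,\eps_2}r_M$ consists of the truncation residual $\mu^{M+1}\eps_2^{4}(w_{M-1}^{(4)}+\dots)$ plus residuals of the layer expansions. These are bounded by $(\mu eM\gamma)^{M+1}\eps_2^{-c}$, and with $M\sim \delta'/\mu$ this gives $e^{-\delta\eps_2/\eps_1}$; the polynomial factors of $\eps_2^{-1}$ are absorbed by reducing $\delta$. The boundary mismatches are exponentially small layer tails $e^{-\beta\eps_2/\eps_1}$ evaluated at the opposite endpoint, and I would lift them with a smooth cutoff. A weighted energy estimate for $L_{\eps_1,\eps_2}$, obtained by testing with $r_M$ and using the coercivity from (\ref{data}), gives $\eps_1^2\|r_M''\|^2+\eps_2^2\|r_M'\|^2+\|r_M\|^2\lesssim\|L r_M\|^2$. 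This yields the weighted norms in the statement after rescaling.

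\textbf{Main obstacle.} The hardest step is Step 1: keeping the recursion $F_j=-\eps_2^4w_{j-2}^{(4)}$ from losing more than a factor $\gamma j$ per step while the bounds are only $\max\{n,\eps_2^{-1}\}^n$-type rather than $n!$-type. I would first try to carry the induction in complex neighbourhoods with weighted norms $\sup_{G_\delta}|\cdot|$ combined with Melenk's estimate $\max\{\delta^{-1},\eps_2^{-1}\}$, mimicking Lemma~\ref{lem:complex}.
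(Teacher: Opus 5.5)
Up to normalization, this is the paper's own argument. The paper divides (\ref{de}) by $\eps_2^2$ and expands in $\eps_1/\eps_2$. Its smooth terms $U_j$ solve second-order reaction--diffusion problems with parameter $\eps_2$, so the $\eps_2$-layers live inside the smooth part and are controlled by Melenk's theory. The $\eps_1/\eps_2$-layers are treated with the half-line fourth-order lemma under the coupling $\tilde U_j'(0)=-U_{j-1}'(0)$, $U_j(0)=-\tilde U_j(0)$, and the proof closes with an energy estimate. Your $w_j$ correspond to $\eps_2^jU_j$, and your bound $\gamma^j j^j\max\{n,\eps_2^{-1}\}^n$ is (\ref{claim_a}) rescaled.

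Of your two options for the Dirichlet data, only the absorbing one works. The decaying half-line problem admits a single condition at $0$, so for the $j$-th thin-layer term $\phi_j$ the choice $w_j(0)=-\phi_j(0)$ is forced; with $w_j(0)=0$ you would have $r_M(0)=O(1)$. Two of your steps are more careful than the paper. Your nested-domain induction in Step 1 addresses the fact that the data $U_{j-2}^{(4)}$ only satisfy $\max\{n,\eps_2^{-1}\}^n$-type bounds, whereas the paper just quotes Melenk's theorem. Your Step 3 also keeps the layer residuals and boundary mismatches, which the paper drops by asserting $\mathcal{L}R=(\eps_1/\eps_2)^{M+2}U_M^{(4)}$ ``due to cancellations''.

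The step that fails is the end of Step 3. With $M\simeq\delta'/\mu$ you get $(\mu e M\gamma)^{M+1}\le e^{-(M+1)}\approx e^{-c\,\eps_2^2/\eps_1}$, not $e^{-\delta\eps_2/\eps_1}$; the exponent is smaller by a factor $\eps_2$. You also absorb algebraic factors $\eps_2^{-c}$, for instance the one from the weight $\eps_2^{-1}$ on $\|r_M\|_{0,\Omega}$, since testing with $r_M$ only gives $\|r_M\|_{0,\Omega}\lesssim\|L_{\eps_1,\eps_2}r_M\|_{0,\Omega}$. That absorption is legitimate against $e^{-\delta\eps_2/\eps_1}$, because $\eps_2^{-1}\le\eps_2/\eps_1$. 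It is not legitimate against the rate you actually obtain, since $\eps_2^2/\eps_1$ may grow only like $\log(1/\eps_2)$.

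The paper's proof ends at the same quantity $(\eps_1/\eps_2^2\,K_2M)^M$ and makes the same jump, so neither argument yields the stated remainder bound. This construction supports $e^{-\delta\eps_2^2/\eps_1}$ up to powers of $\eps_2^{-1}$; the rate $e^{-\delta\eps_2/\eps_1}$ would need a different idea.

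There is a similar problem at the right endpoint. Treating $\hat u^{BL}_M$ ``identically'' gives $(\eps_1/\eps_2)^{-n}e^{-\beta\eps_2(1-x)/\eps_1}$. This does not imply the displayed $\eps_2^{-n}e^{-\beta(1-x)/\eps_2}$: at $x=1$ it is larger by $(\eps_2^2/\eps_1)^n$. The paper also only says ``by symmetry'', so that line of the statement is not what either argument proves.
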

\begin{proof}
We rewrite the BVP as
\begin{equation}\label{eq:bvp_sa}
\left.
\begin{gathered}
\mathcal{L}u:=\left(\frac{\varepsilon _{1}}{\varepsilon _{2}}\right)^2 u^{(4)}(x)-\left( b(x)u^{\prime
}(x)\right) ^{\prime }+\frac{c(x)}{\eps_2^2} u(x) =\frac{f(x)}{\eps_2^2}\ \text{in }\Omega =\left( 0,1\right)   \\
u(0)=u(1)=u^{\prime }(0)=u^{\prime }(1) =0
\end{gathered}
\right\},
\end{equation}
which can be viewed as a one-parameter problem, the parameter being $\eps_1 / \eps_2$, with data depending on $\eps_2$. In \cite{PC, PCarXiv} one parameter fourth order problems with analytic data (independent of $\eps_2$) were studied and analytic regularity results were obtained (see \cite[Thm. 7]{PCarXiv}). Therefore, we will revisit the proofs of the relevant results in \cite{PCarXiv} to see how these are altered by the presence of $\eps^{-2}_2$ in the data.

We define the stretched variables  $\tilde{x} = x \eps_2 / \eps_1 ,  \hat{x} = (1-x) \eps_2 / \eps_1$, and make the ansatz
$$
u = \sum_{i=0}^{\infty} \left( \frac{\eps_1}{\eps_2} \right)^i \{ U_i(x) + \tilde{U}_i(\tilde{x}) + \hat{U}_i (\hat{x})\},
$$
with the functions $U_i , \tilde{U}_i , \hat{U}_i $ to be determined. We substitute the above expression into the differential equation in (\ref{eq:bvp_sa}), separate the slow (i.e.  $x$) from the fast (i.e. $\tilde{x}, \hat{x}$ variables), and equate like powers of $\eps_1 / \eps_2$ on both sides. This gives information on the functions $U_i , \tilde{U}_i , \hat{U}_i $, as described in Table 1, in which we use the notation
$\tilde{b}_k(\tilde{x})=\frac{b^{(k)}(0)}{k!} \tilde{x}^k$, $\tilde{c}_k(\tilde{x})=\frac{c^{(k)}(0)}{k!} \tilde{x}^k$,  $\hat{b}_k(\tilde{x})=\frac{b^{(k)}(1)}{k!} \hat{x}^k$, $\hat{c}_k(\hat{x})=\frac{c^{(k)}(1)}{k!} \hat{x}^k$, and the functions $\tilde{F}_j$ are given by  (see \cite{PCarXiv} for more details)
\begin{equation}\label{Fj}
\tilde{F}_j(\tilde{x}):=\sum_{k=1}^{A_j} \tilde{b}_k(\tilde{x}) \tilde{U}_{j-k}^{\prime \prime}(\tilde{x})+\sum_{k=1}^{A_j} \tilde{b}_k^{\prime}(\tilde{x}) \tilde{U}_{j-k}^{\prime}(\tilde{x})-\sum_{k=0}^{B_j} \eps_2^{-2} \tilde{c}_k(\tilde{x}) \tilde{U}_{j-2-k}(\tilde{x}), 
\end{equation}
where
\begin{equation*}
\begin{aligned}
&A_j=\left\{\begin{array}{ll}
\frac{j}{2}, & \text { if } j \text { is even }, \\
\frac{j-1}{2}, & \text { if } j \text { is odd },
\end{array} \quad B_j= \begin{cases}\frac{j-2}{2}, & \text { if } j \text { is even }, \\
\frac{j-3}{2}, & \text { if } j \text { is odd } .\end{cases}\right. \notag
\end{aligned}
\end{equation*}
The functions $\hat{F}_j$ are defined in a similar fashion (see \cite{PCarXiv}).

\begin{table}[h]\label{table1}
\begin{center}
\begin{equation*}
\begin{array}{||l||l||l||}
\hline \text { Smooth part } & \text { Left boundary layer} & \text {Right boundary layer }\\
\hline \hline-\eps_2^2 \left(b U_0^{\prime}\right)^{\prime}+c U_0=f, & \tilde{U}_0^{(4)}-\left(\tilde{b}_0 \tilde{U}_0^{\prime}\right)^{\prime}=0, & \hat{U}_0^{(4)}-\left(\hat{b}_0 \hat{U}_0^{\prime}\right)^{\prime}=0, \\
U_0(0)=-\tilde{U}_0(0)=0, & \lim _{\tilde{x} \rightarrow \infty} \tilde{U}_0(\tilde{x})=0, & \lim _{\hat{x} \rightarrow \infty} \hat{U}_0(\hat{x})=0, \\
U_0(1)=-\hat{U}_0(0)=0, & \tilde{U}_0^{\prime}(0)=0, & \hat{U}_0^{\prime}(0)=0, \\
\hline \hline-\eps_2^2 \left(b U_1^{\prime}\right)^{\prime}+c U_1=0, & \tilde{U}_1^{(4)}-\left(\tilde{b}_0 \tilde{U}_1^{\prime}\right)^{\prime}=0, & \hat{U}_1^{(4)}-\left(\hat{b}_0 \hat{U}_1^{\prime}\right)^{\prime}=0, \\
U_1(0)=-\tilde{U}_1(0), & \lim _{\tilde{x} \rightarrow \infty} \tilde{U}_1(\tilde{x})=0, & \lim _{\hat{x} \rightarrow \infty} \hat{U}_1(\hat{x})=0, \\
U_1(1)=-\hat{U}_1(0), & \tilde{U}_1^{\prime}(0)=-U_0^{\prime}(0), & \hat{U}_1^{\prime}(0)=-U_0^{\prime}(1), \\
\hline \hline-\eps_2^2\left(b U_j^{\prime}\right)'+c U_j=-U_{j-2}^{(4)}, & \tilde{U}_j^{(4)}-\left(\tilde{b}_0 \tilde{U}_j^{\prime}\right)'=\tilde{F}_j, & \hat{U}_j^{(4)}-\left(\hat{b}_0 \hat{U}_j^{\prime}\right)^{\prime}=\hat{F}_j, \\
U_j(0)=-\tilde{U}_j(0), & \lim _{\tilde{x} \rightarrow \infty} \tilde{U}_j(\tilde{x})=0, & \lim _{\hat{x} \rightarrow \infty} \hat{U}_j(\hat{x})=0, \\
U_j(1)=-\hat{U}_j(0), & \tilde{U}_j^{\prime}(0)=-U_{j-1}^{\prime}(0), & \hat{U}_j^{\prime}(0)=-U_{j-1}^{\prime}(1) . \\
\hline
\end{array}
\end{equation*}
\end{center}
\caption{Different boundary value problems satisfied by $U_j, \tilde{U}_j, \hat{U}_j , j \in \mathbb{N}_0$.}
\end{table}
Due to the BCs, the sequences $\{U_j, \tilde{U}_j,  \hat{U}_j\}_{j\ge 1}$ are intertwined, so we will treat them simultaneously. For $j=0$, we have (see, e.g., \cite[Prop. 2.3.1]{PC} or \cite{melenk})
$$
\Vert U_0^{(n)}\Vert_{\infty, \Omega} \leq C_{0} K_{0}^{n} \max\{ n, \eps_2^{-1}\}^{n}.
$$
We also calculate $\tilde{U}_0 = 0, \hat{U}_0 = 0$. For $j=1$, we find
$$
\tilde{U}_1(\tilde{x}) = \frac{U'_0(0)}{\sqrt{b(0)}} e^{-\sqrt{b(0)} \tilde{x}} \; , \; 
\hat{U}_1(\hat{x}) = \frac{U'_0(1)}{\sqrt{b(1)}} e^{-\sqrt{b(1)} \hat{x}}.
$$
Thus $U_1$ satisfies
$$
 -\eps_2^2 \left(b U_1^{\prime}\right)^{\prime}+c U_1=0,
$$
with boundary conditions that are $O(\eps_2^{-1})$. Hence, (see, e.g., \cite{melenk})
$$
\Vert U^{(n)}_1 \Vert_{\infty, \Omega} \leq C \eps_2^{-1} \max \{n, \eps_2^{-1} \}^n.
$$
%We also have that $\tilde{U}_2(\tilde{x})$ and $\hat{U}_2(\hat{x})$ satisfy
%$$
%\tilde{U}_2^{(4)}-\left(\tilde{b}_0 \tilde{U}_2^{\prime}\right)'=\tilde{F}_2\; , \; \hat{U}_2^{(4)}-\left(\hat{b}_0 \hat{U}_2^{\prime}\right)^{\prime}=\hat{F}_2,
%$$
%
%with boundary conditions that are $O(\eps_2^{-1})$, where
%
%\begin{equation*}
%\tilde{F}_2(\tilde{x})= \tilde{b}_1(\tilde{x}) \tilde{U}_{1}^{\prime \prime}(\tilde{x})+ \tilde{b}_1^{\prime}(\tilde{x}) \tilde{U}_{1}^{\prime}(\tilde{x}),
%\end{equation*}
%and similarly for $\hat{F}_2$. There holds
%
%$$
%\vert \tilde{F}_2 \vert \leq C \eps_2^{-1} \; , \;\vert \hat{F}_2 \vert \leq C \eps_2^{-1}
%$$
%and as a result (see the proof of \cite[Thm. 2.4.16]{PC} for the steps)
%$$
%\vert \tilde{U}_2 (\tilde{x}) \vert \leq C \eps_2^{-2}  e^{-\beta \tilde{x}} .
%$$
%
%Now, $U_2$ satisfies
%
%$$
%-\eps_2^2 (bU_2')' +c U_2 = - U^{(4)}_0,
%$$
%with boundary conditions that are $O(\eps_2^{-2})$. Hence,
%
%$$
%\Vert U_2 \Vert_{\infty,\Omega} \leq C \eps_2^{-4}.
%$$
%
\textit{Claim} : There exist positive constants $C, K_1, K_2, \tilde{K}_1, \hat{K}_1, \alpha$, independent of
$\eps_1, \eps_2$ such that 
\begin{equation}\label{claim_a}
\Vert U_j^{(n)}\Vert_{\infty, \Omega} \leq C K_{1}^{n} K_2^j j! \eps_2^{-j} \max\{ n, \eps_2^{-1} \}^{n} \; \forall \; j \in \mathbb{N}_0, n \in \mathbb{N}_0,
\end{equation}
\begin{equation}\label{claim_b}
\vert \tilde{U}_j (z) \vert \leq C  \tilde{K}^j_1  \eps_2^{-j} (\alpha j +|z|)^{2j-1} e^{-\text{Re}(z)} \; \forall \; j \in \mathbb{N}, z \in \mathbb{C},
%
%\vert \hat{U}_j (\tilde{x}) \vert \leq C  \hat{K}^j_1,
\end{equation}
and similarly for $\hat{U}_j$.

To show the claim we will use induction on $j$ with the base cases shown above. So we assume the results
hold for $j$, and establish them for $j+1$. First, we note that the functions $U_{j+1}, \tilde{U}_{j+1}, \hat{U}_{j+1}$ satisfy
\begin{equation}\label{eq:Ujp1}
\left.
\begin{gathered}
-\eps_2^2\left(b U_{j+1}^{\prime}\right)'+c U_{j+1}=-U_{j-2}^{(4)} \\
U_{j+1}(0)=-\tilde{U}_{j+1}(0) \; , \; U_{j+1}(1)=-\hat{U}_{j+1}(0)
\end{gathered}
\right\},
\end{equation}
\begin{equation}\label{eq:Ujtp1}
\left.
\begin{gathered}
\tilde{U}_{j+1}^{(4)}-\left(\tilde{b}_0 \tilde{U}_{j+1}^{\prime}\right)' = \tilde{F}_{j+1} \\
 \lim _{\tilde{x} \rightarrow \infty} \tilde{U}_{j+1}(\tilde{x})=0 \; , \; \tilde{U}_{j+1}^{\prime}(0)=-U_{j}^{\prime}(0)
\end{gathered}
\right\},
\end{equation}
\begin{equation}\label{eq:Ujhp1}
\left.
\begin{gathered}
\hat{U}_{j+1}^{(4)}-\left(\hat{b}_0 \hat{U}_{j+1}^{\prime}\right)^{\prime}=\hat{F}_{j+1} \\
\lim _{\hat{x} \rightarrow \infty} \hat{U}_{j+1}(\hat{x})=0 \; \, \;  \hat{U}_{j+1}^{\prime}(0)=-U_{j}^{\prime}(1)
\end{gathered}
\right\}.
\end{equation}
The right hand side $\tilde{F}_{j+1}$ in (\ref{eq:Ujtp1}) (cf.~eq.~(\ref{Fj})), satisfies 
$$
\vert \tilde{F}_{j+1}(z) \vert \leq C \gamma^j \eps_2^{-j} (\alpha j + |z| )^{2j-1},
$$
where the induction hypothesis was used (see \cite{PCarXiv} for details). Then, by \cite[Lemma 1]{PCarXiv}, we have (using also
that $|\tilde{U}'_{j+1}(0) | = O(\eps_2^{-j})$),
$$
\vert \tilde{U}_{j+1} (z) \vert \leq C  \tilde{K}^{j+1}_1  \eps_2^{-(j+1)} (\alpha (j+1) +|z|)^{2j} e^{-\text{Re}(z)},
$$
from which (\ref{claim_b}) follows.

To show (\ref{claim_a}), note that the function $U_{j+1}$, satisfies (\ref{eq:Ujp1}) with the right hand side and the boundary conditions being $O\left(\eps_2^{-(j+1)}\right)$ and $O\left(\eps_2^{-j}\right)$, respectively. Theorem 1 in \cite{melenk} gives
$$
\Vert U^{(n)}_{j+1} \Vert_{\infty,\Omega} \leq C K_1^n K_2^{j+1} \eps_2^{-(j+1)} \max\{n, \eps_2^{-1} \}^n,
$$
hence the claim.

Now, using Cauchy's integral theorem for derivatives, we may show that the function $\tilde{U}_j$ satisfies
\begin{eqnarray*}
\left|\tilde{U}_j^{(n)}({x})\right| &\leq& C \tilde{K}^n \left( \frac{\eps_1}{\eps_2} \right)^{-n} \left(\alpha^2 e \gamma\right)^j j^{j-1} e^{-\frac{\sqrt{b(0)}}{2} \eps_2 {x} / \eps_1} \\
&\leq& C \left( \frac{\eps_1}{\eps_2} \right)^{-n} \tilde{K}^n \tilde{K}_1^{j} j!  e^{-\beta \eps_2 {x} / \eps_1},
\end{eqnarray*}
and similarly for $ \hat{U}_j$. (For the details see \cite[Corollary 1]{PCarXiv}.)

Now, with $M \in \mathbb{N}$, we define the smooth part $S$ as
$$
S(x) = \sum_{j=0}^M \left( \frac{\eps_1}{\eps_2} \right)^j U_j(x),
$$
the left boundary layer $E_1$, as
$$
E_1(\tilde{x}) = \sum_{j=0}^M \left( \frac{\eps_1}{\eps_2} \right)^j \tilde{U}_j(\tilde{x}),
$$
and similarly for the right boundary layer $E_2(\hat{x})$. 

Therefore,
\begin{eqnarray*}
\Vert S^{(n)} \Vert_{\infty,\Omega} &\leq& \sum_{j=0}^M \left( \frac{\eps_1}{\eps_2} \right)^j \Vert U_j^{(n)} \Vert_{\infty,\Omega} \\
&\leq& C K_1^n  \max\{ n, \eps_2^{-1} \}^{n} \sum_{j=0}^M \left( \frac{\eps_1}{\eps_2} \right)^j j! K_{2}^{j} \eps_2^{-j}  \\
&\leq& C K_1^n  \max\{ n, \eps_2^{-1} \}^{n} \sum_{j=0}^{\infty} \left( \frac{\eps_1}{\eps^2_2} M K_{2}  \right)^j .
\end{eqnarray*}
If we select $M \in \mathbb{N}$ such that $\frac{\eps_1}{\eps^2_2} M K_{2} < 1$ then the geometric series converges and we have the result for $S$.

Similarly,
\begin{eqnarray*}
\vert E^{(n)}_1(\tilde{x}) \vert &\leq& \sum_{j=0}^M \left( \frac{\eps_1}{\eps_2} \right)^j \vert \tilde{U}^{(n)}_j(\tilde{x}) \vert \\
&\leq&C  \tilde{K}_{2}^{n} \left( \frac{\eps_1}{\eps_2} \right)^{-n} e^{-\tilde{x}} \sum_{j=0}^M \left( \frac{\eps_1}{\eps_2} \right)^j   \tilde{K}^j_1 j!\eps_2^{-j} ,
\end{eqnarray*}
with the rest of the steps being the same as above. The result holds true for $E_2$ as well, by symmetry.

It remains to deal with the remainder, $R = u - S - E_1 - E_2$, and we have
\begin{eqnarray*}
\mathcal{L} R &=& \mathcal{L}u - \mathcal{L}S - \mathcal{L}E_1 - \mathcal{L}E_2 = \frac{f}{\eps_2^2} - \sum_{i=0}^{M} \left( \frac{\eps_1}{\eps_2} \right)^i \mathcal{L}U_i \\
&=& \frac{f}{\eps_2^2} - \sum_{i=0}^{M} \left( \frac{\eps_1}{\eps_2} \right)^i \left\{\left(\frac{\varepsilon _{1}}{\varepsilon _{2}}\right)^2 U_i^{(4)}(x)-\left( b(x)U_i^{\prime
}(x)\right) ^{\prime }+\frac{c(x)}{\eps_2^2} U_i(x) \right\} \\
&=& \left( \frac{\eps_1}{\eps_2} \right)^{M+2} U_M^{(4)},
\end{eqnarray*}
due to cancellations. A standard energy argument gives
$$
\eps_2^3 \Vert R'' \Vert_{0,\Omega} + \eps_2 \Vert R' \Vert_{0,\Omega}+ \eps_2^{-1} \Vert R \Vert_{0,\Omega} \leq C \left( \frac{\eps_1}{\eps^2_2} K_2 M \right)^M,
$$
and since we chose $M$ such that the quantity in parentheses is smaller than $1$, we have the desired result. This completes the proof of Theorem \ref{thm:main2}.

\end{proof}

%%%%%%%%%%%%%%%%%%%%%%%%%%%%%%%%%%%%%%%%%%%%%%%%%
\section{Numerical illustration\label{nr}}
In this section we show the exact solution to the problem, for the constant coefficient case, in order to visualize the layers present. We choose $b(x)=c(x)=f(x)=1$, and we may obtain an exact solution. In the figures below we plot $u$ and $u'$, for various values of $\eps_1, \eps_2$. 

First, in Figure \ref{F1}, we show the solution (left) and its derivative (right) for $\eps_1 = \eps_2 = 10^{-2}$. We see that, as expected, the solution $u$ does not feature any layers, while $u'$ does.

\begin{figure}[h]
\begin{center}
\includegraphics[width=0.4\textwidth]{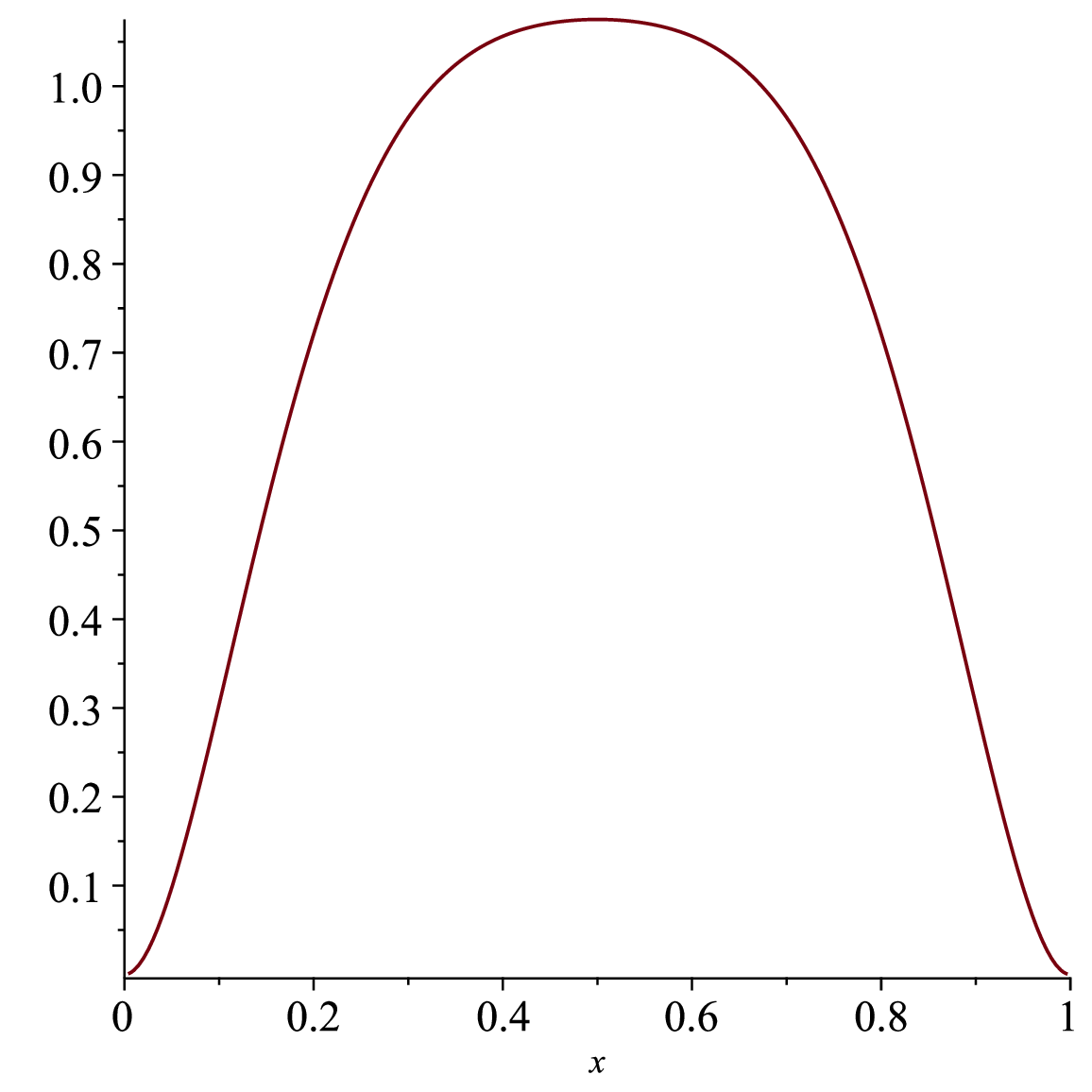} 
\mbox{ }
\includegraphics[width=0.4\textwidth]{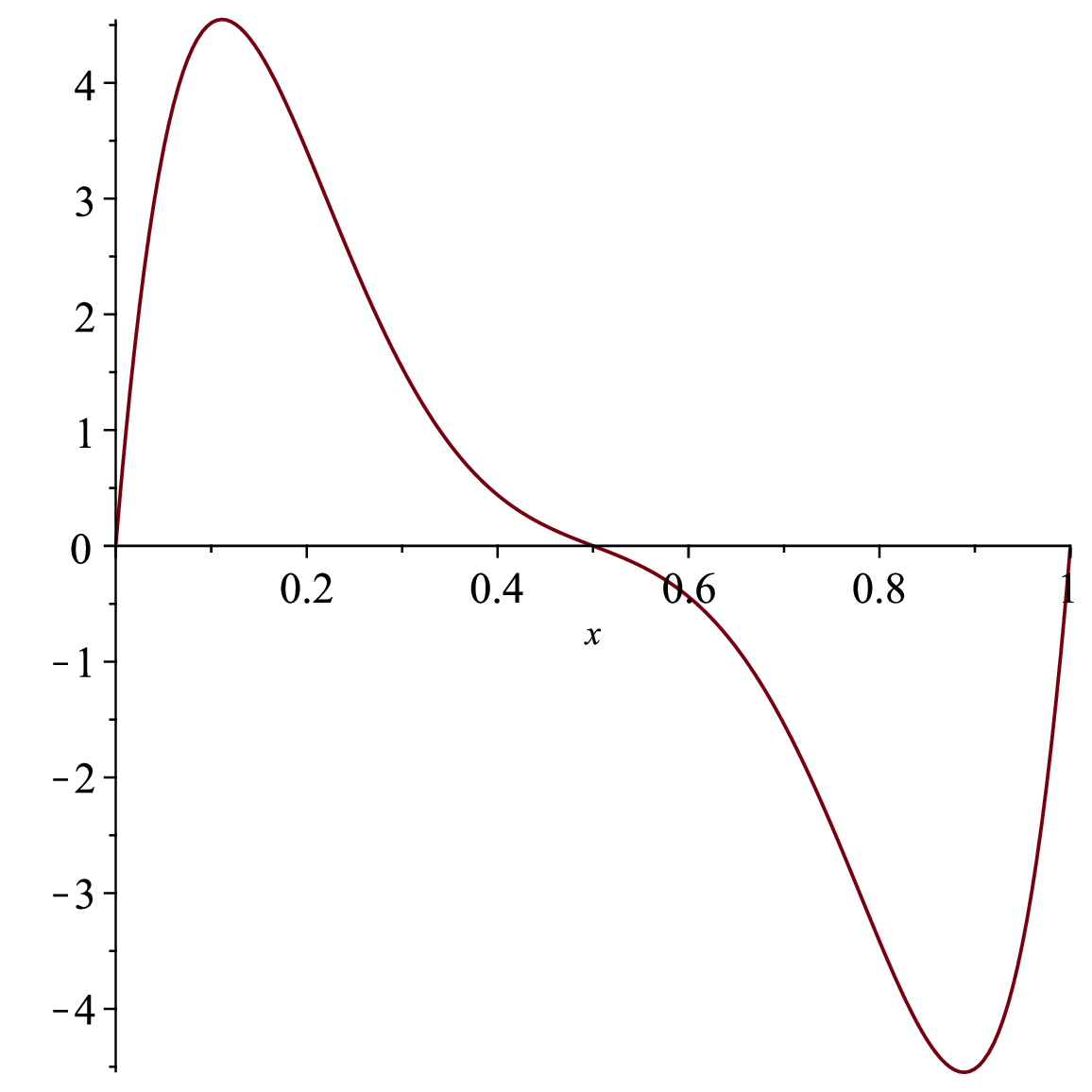} 
\end{center}
\caption{The exact solution of (\ref{de}), (\ref{bc}) and its derivative, with $b(x)=c(x)=f(x)=1$, and $\eps_1 = \eps_2 = 10^{-2}$.}
\label{F1}
\end{figure}

In Figure \ref{F2}, we show the solution and its derivative for $\eps_1 =10^{-3}, \eps_2 = 10^{-1}$. Since $\eps_1 \ll \eps_2^2$, both the solution $u$ and $u'$ feature layers of different widths (as predicted by Theorem \ref{thm:main}).

\begin{figure}[h]
\begin{center}
\includegraphics[width=0.4\textwidth]{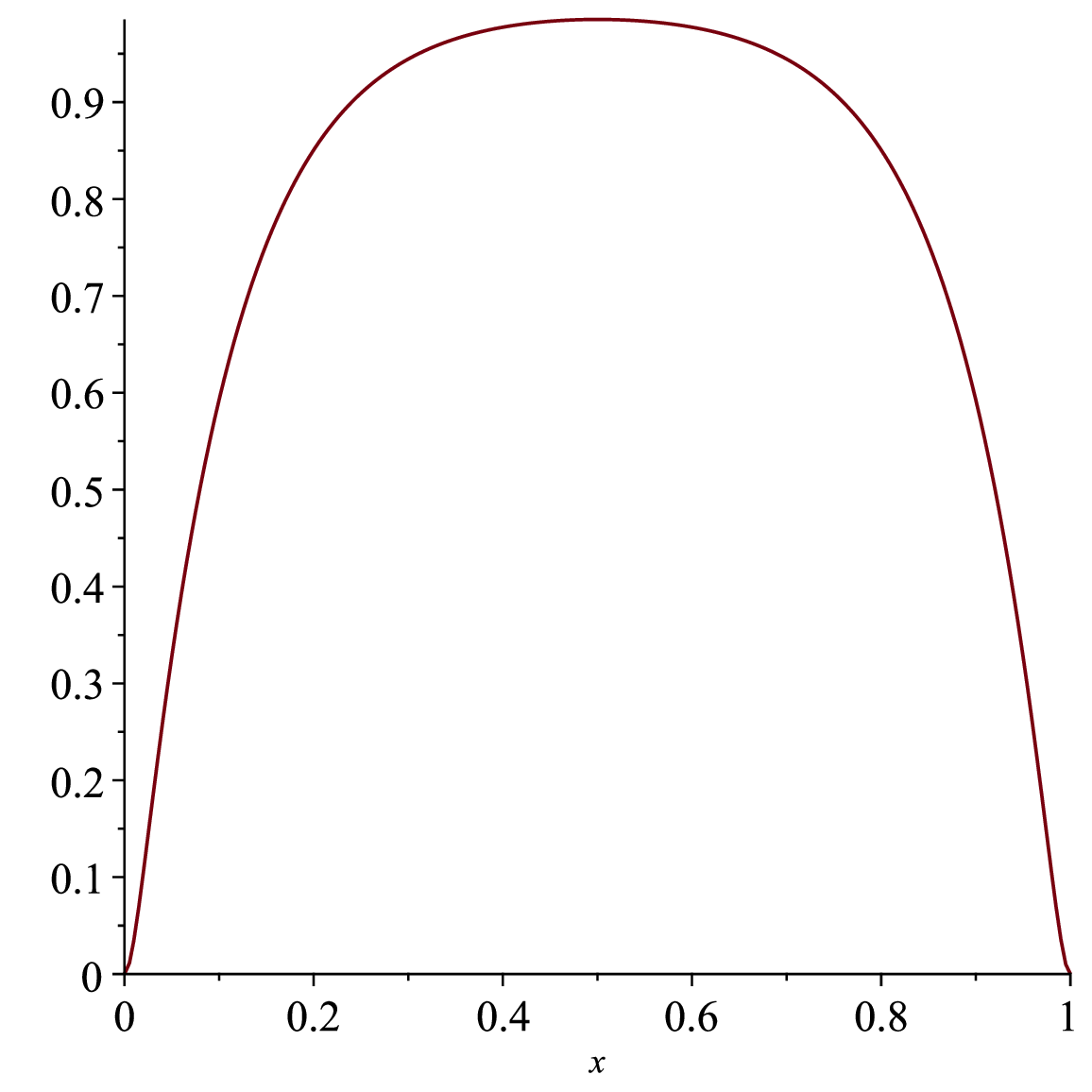} 
\mbox{ }
\includegraphics[width=0.4\textwidth]{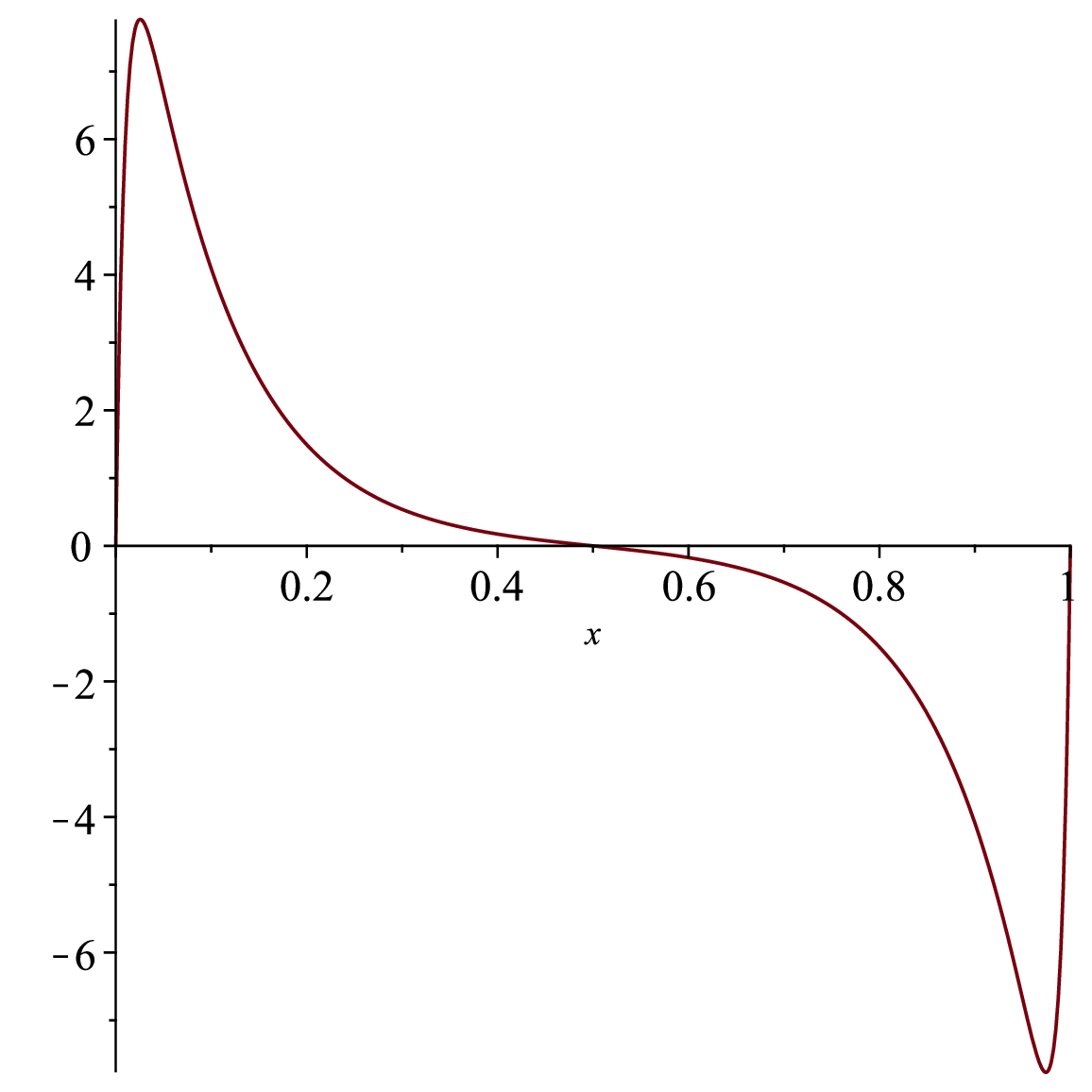} 
\end{center}
\caption{The exact solution of (\ref{de}), (\ref{bc}) and its derivative, with $b(x)=c(x)=f(x)=1$, and $\eps_1 = 10^{-3}, \eps_2 = 10^{-1}$.}
\label{F2}
\end{figure}

Finally, in Figure \ref{F3} we repeat the illustration for $\eps_1 = 10^{-5}, \eps_2 = 10^{-2}$, with the same conclusions as above.

\begin{figure}[h]
\begin{center}
\includegraphics[width=0.4\textwidth]{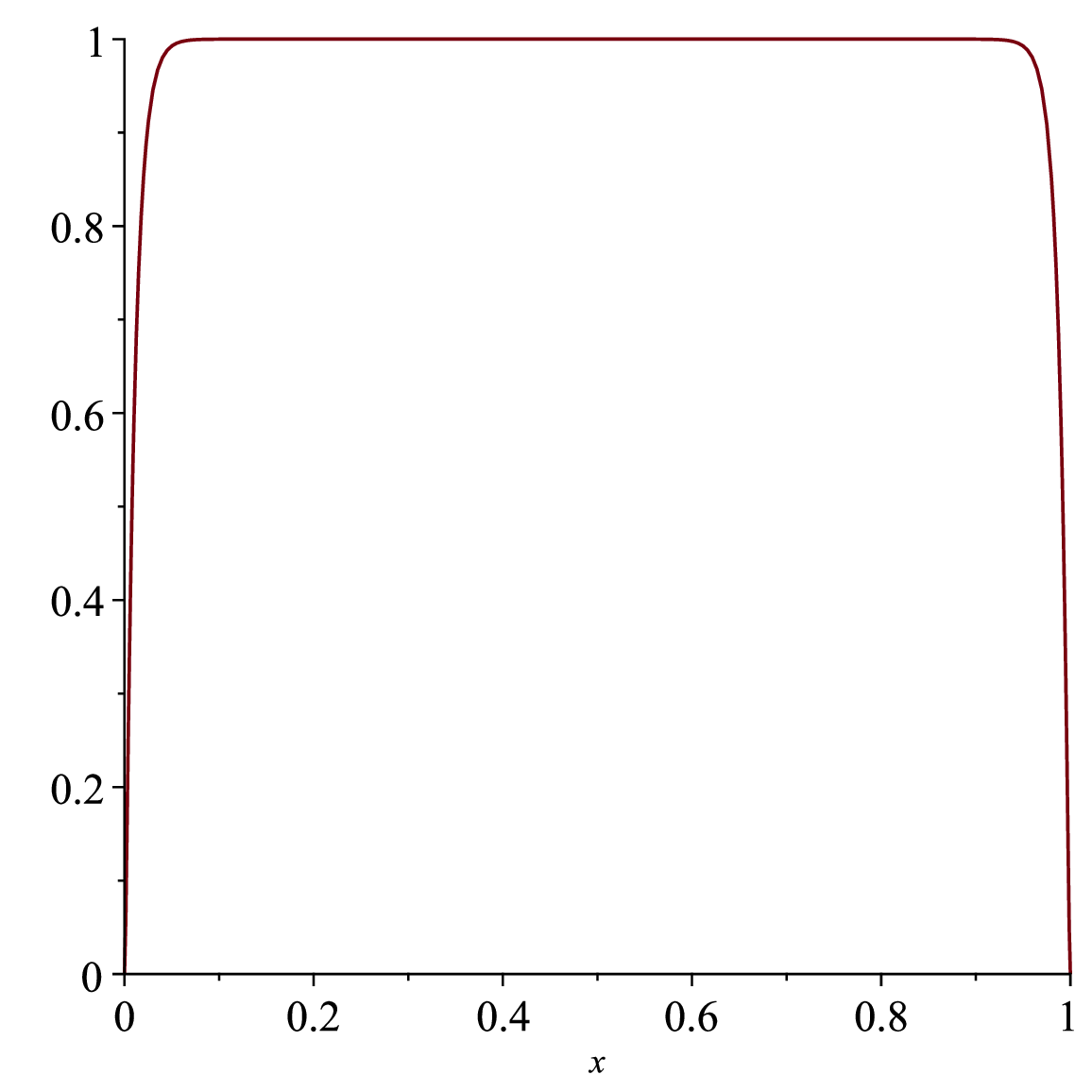} 
\mbox{ }
\includegraphics[width=0.4\textwidth]{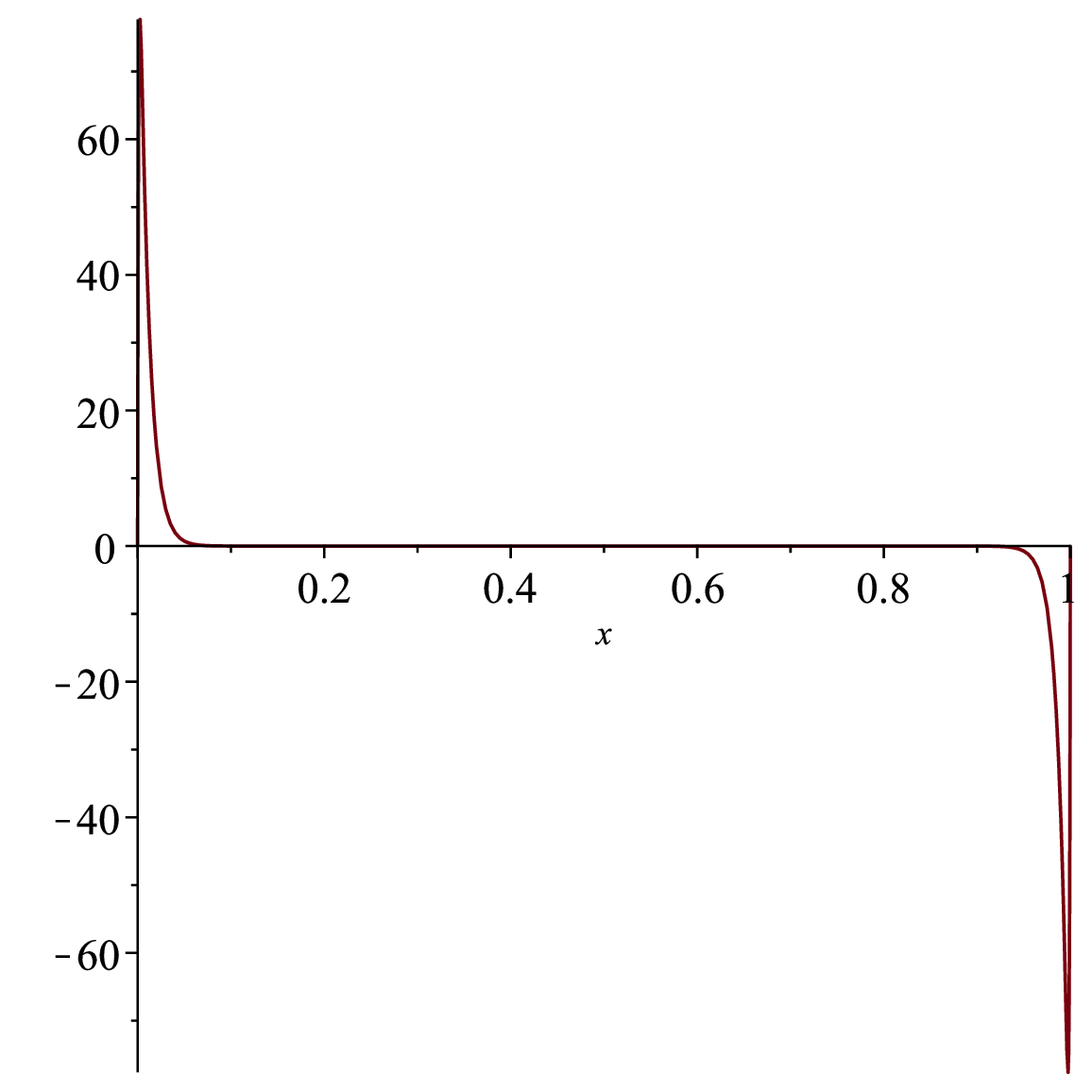} 
\end{center}
\caption{The exact solution of (\ref{de}), (\ref{bc}) and its derivative, with $b(x)=c(x)=f(x)=1$, and $\eps_1 = 10^{-5}, \eps_2 = 10^{-2}$.}
\label{F3}
\end{figure}
%%%%%%%%%%%%%%%%%%%%%%%%%%%%%%%%%%%%%%%%%%%%%%%%%
\section{Conclusions\label{concl}}

In this article we considered a singularly perturbed fourth order boundary value problem, with two
small parameters multiplying the fourth and second derivative, respectively. Under the analyticity of 
the data assumption, we derived classical regularity regults for the solution, showing that it is analytic.
When the parameters are small, the method of matched asymptotic expansions was used to obtain
an expansion for the solution into smooth and layer parts.  Derivative estimates for each part were 
obtained, which show the dependence on the order as well as the perturbation parameters. The main
result of the article gives analytic regularity results for the solution to such problems, necessary for 
proving the convergence of numerical methods for their approximation. In fact, in \cite{Irene} an $hp$ Finite Element Method (FEM) was constructed and analyzed based on the results presented here. The decomposition both guides the construction of a mesh, and facilitates the analysis of the method, yielding \emph{uniform, exponential convergence} as the degree $p$ of the approximating polynomials is increased. (See \cite{Irene} for more details.)

%%%%%%%%%%%%%%%%%%%%%%%%%%%%%%%%%%%%%%%%%%%%%%%%%
\section{Appendix}

Here we provide the details for eqs. (\ref{uij})--(\ref{ubar_ij}).

\subsection{Smooth part}

Beginning with (\ref{uij}), we have 
\begin{equation*}
\sum_{i=0}^{\infty }\sum_{j=0}^{\infty }\varepsilon _{2}^{i}(\varepsilon
_{1}/\varepsilon _{2}^{2})^{j}\left\{ \varepsilon
_{1}^{2}u_{i,j}^{(4)}-\varepsilon _{2}^{2}\left( bu_{i,j}^{\prime }\right)
^{\prime }+cu_{i,j}\right\} =f
\end{equation*}%
\begin{equation*}
\Rightarrow \sum_{i=0}^{\infty }\sum_{j=0}^{\infty }\varepsilon
_{2}^{i}(\varepsilon _{1}^{j+2}/\varepsilon
_{2}^{2j})u_{i,j}^{(4)}-\sum_{i=0}^{\infty }\sum_{j=0}^{\infty }\varepsilon
_{2}^{i+2}(\varepsilon _{1}/\varepsilon _{2}^{2})^{j}\left( bu_{i,j}^{\prime
}\right) ^{\prime }+\sum_{i=0}^{\infty }\sum_{j=0}^{\infty }\varepsilon
_{2}^{i}(\varepsilon _{1}/\varepsilon _{2}^{2})^{j}cu_{i,j}=f.
\end{equation*}%
The first sum is%
\begin{eqnarray*}
\sum_{i=0}^{\infty }\sum_{j=0}^{\infty }\varepsilon _{2}^{i}(\varepsilon
_{1}^{j+2}/\varepsilon _{2}^{2j})u_{i,j}^{(4)} &=&\sum_{i=0}^{\infty
}\sum_{j=2}^{\infty }\varepsilon _{2}^{i}(\varepsilon _{1}^{j}/\varepsilon
_{2}^{2j-4})u_{i,j-2}^{(4)}=\sum_{i=0}^{\infty }\sum_{j=2}^{\infty
}\varepsilon _{2}^{i+4}(\varepsilon _{1}/\varepsilon
_{2}^{2})^{j}u_{i,j-2}^{(4)} \\
&=&\sum_{i=4}^{\infty }\sum_{j=2}^{\infty }\varepsilon _{2}^{i}(\varepsilon
_{1}/\varepsilon _{2}^{2})^{j}u_{i-4,j-2}^{(4)}.
\end{eqnarray*}%
The second sum is%
\begin{equation*}
\sum_{i=0}^{\infty }\sum_{j=0}^{\infty }\varepsilon _{2}^{i+2}(\varepsilon
_{1}/\varepsilon _{2}^{2})^{j}\left( bu_{i,j}^{\prime }\right) ^{\prime
}=\sum_{i=2}^{\infty }\sum_{j=0}^{\infty }\varepsilon _{2}^{i}(\varepsilon
_{1}/\varepsilon _{2}^{2})^{j}\left( bu_{i-2,j}^{\prime }\right) ^{\prime }
\end{equation*}%
\begin{equation*}
=\sum_{i=2}^{\infty }\varepsilon _{2}^{i}\left( bu_{i-2,0}^{\prime
}\right) ^{\prime }+\sum_{i=2}^{\infty }\varepsilon _{2}^{i}(\varepsilon
_{1}/\varepsilon _{2}^{2})\left( bu_{i-2,1}^{\prime }\right) ^{\prime
}+\sum_{j=2}^{\infty }\varepsilon _{2}^{2}(\varepsilon _{1}/\varepsilon
_{2}^{2})^{j}\left( bu_{0,j}^{\prime }\right) ^{\prime }+
\end{equation*}%
\begin{equation*}
+\sum_{j=2}^{\infty
}\varepsilon _{2}^{3}(\varepsilon _{1}/\varepsilon _{2}^{2})^{j}\left(
bu_{1,j}^{\prime }\right) ^{\prime }+\sum_{i=4}^{\infty }\sum_{j=2}^{\infty
}\varepsilon _{2}^{i}(\varepsilon _{1}/\varepsilon _{2}^{2})^{j}\left(
bu_{i-2,j}^{\prime }\right) ^{\prime }.
\end{equation*}%
Finally, the third sum is%
\begin{equation*}
\sum_{i=0}^{\infty }\sum_{j=0}^{\infty }\varepsilon _{2}^{i}(\varepsilon
_{1}/\varepsilon _{2})^{j}cu_{i,j}=cu_{0,0}+\sum_{j=2}^{\infty }(\varepsilon
_{1}/\varepsilon _{2}^{2})^{j}cu_{0,j}+\sum_{j=2}^{\infty }\varepsilon
_{2}(\varepsilon _{1}/\varepsilon _{2}^{2})^{j}cu_{1,j}+\sum_{j=2}^{\infty
}\varepsilon _{2}^{2}(\varepsilon _{1}/\varepsilon _{2}^{2})^{j}cu_{2,j}+
\end{equation*}%
\begin{equation*}
+\sum_{j=2}^{\infty }\varepsilon _{2}^{3}(\varepsilon _{1}/\varepsilon
_{2}^{2})^{j}cu_{3,j}+\sum_{i=1}^{\infty }\varepsilon
_{2}^{i}cu_{i,0}+\sum_{i=0}^{\infty }\varepsilon _{2}^{i}(\varepsilon
_{1}/\varepsilon _{2}^{2})cu_{i,1}+\sum_{i=4}^{\infty }\sum_{j=2}^{\infty
}\varepsilon _{2}^{i}(\varepsilon _{1}/\varepsilon _{2}^{2})^{j}cu_{i,j}.
\end{equation*}%
Immediately, we see that%
\begin{equation*}
u_{0,0}=\frac{f}{c}.
\end{equation*}%
For the rest of the terms we have%
\begin{equation*}
\sum_{i=4}^{\infty }\sum_{j=2}^{\infty }\varepsilon _{2}^{i}(\varepsilon
_{1}/\varepsilon _{2}^{2})^{j}\left\{ u_{i-4,j-2}^{(4)}-\left(
bu_{i-2,j}^{\prime }\right) ^{\prime }+cu_{i,j}\right\} =0,
\end{equation*}%
\begin{equation*}
-\sum_{i=2}^{\infty }\varepsilon _{2}^{i}\left( bu_{i-2,0}^{\prime }\right)
^{\prime }-\sum_{i=2}^{\infty }\varepsilon _{2}^{i}(\varepsilon
_{1}/\varepsilon _{2}^{2})\left( bu_{i-2,1}^{\prime }\right) ^{\prime
}+\sum_{i=1}^{\infty }\varepsilon _{2}^{i}cu_{i,0}+\sum_{i=0}^{\infty
}\varepsilon _{2}^{i}(\varepsilon _{1}/\varepsilon _{2}^{2})cu_{i,1}
=0\Rightarrow \\
\end{equation*}%
\begin{equation*}
\sum_{i=2}^{\infty }\varepsilon _{2}^{i}\left\{ -\left( bu_{i-2,0}^{\prime
}\right) ^{\prime }+cu_{i,0}\right\} +\varepsilon
_{2}cu_{1,0}+\sum_{i=2}^{\infty }\varepsilon _{2}^{i}(\varepsilon
_{1}/\varepsilon _{2}^{2})\left\{ -\left( bu_{i-2,1}^{\prime }\right) ^{\prime
}+cu_{i,1}\right\} +\\
\end{equation*}%
\begin{equation*}
+(\varepsilon _{1}/\varepsilon _{2}^{2})cu_{0,1}+\varepsilon _{2}(\varepsilon _{1}/\varepsilon _{2}^{2})cu_{1,1}=0, \\
\end{equation*}%
\begin{equation*}
\sum_{j=2}^{\infty }(\varepsilon
_{1}/\varepsilon _{2}^{2})^{j}cu_{0,j}+\sum_{j=2}^{\infty }\varepsilon
_{2}(\varepsilon _{1}/\varepsilon _{2}^{2})^{j}cu_{1,j}+\sum_{j=2}^{\infty }\varepsilon _{2}^{2}(\varepsilon _{1}/\varepsilon
_{2}^{2})^{j}\left\{ -\left( bu_{0,j}^{\prime }\right) ^{\prime }+cu_{2,j}\right\} +\\
\end{equation*}%
\begin{equation*}
+\sum_{j=2}^{\infty
}\varepsilon _{2}^{3}(\varepsilon _{1}/\varepsilon _{2}^{2})^{j}\left\{ -\left(
bu_{1,j}^{\prime }\right) ^{\prime }+cu_{3,j}\right\} =0,
\end{equation*}%
from which we see that%
\begin{equation*}
\left. 
\begin{array}{c}
u_{1,0}=0,u_{i,0}=\frac{\left( bu_{i-2,0}^{\prime }\right) ^{\prime }}{c}%
,i\geq 2 \\  
u_{i,1}=0,i\geq 0 \\
u_{0,j}=u_{1,j}=u_{2,j}=u_{3,j}=0,j\geq 2 \\
u_{i,j}=\frac{1}{c}\left\{ \left( bu_{i-2,j}^{\prime }\right) ^{\prime
}-u_{i-4,j-2}^{(4)}\right\} ,i\geq 4,j\geq 2%
\end{array}%
\right\} .
\end{equation*}

\subsection{Boundary layer parts}

For (\ref{ut_ij}) we have 
\begin{equation*}
\sum_{i=0}^{\infty }\sum_{j=0}^{\infty }\varepsilon _{2}^{i}(\varepsilon
_{1}/\varepsilon _{2}^{2})^{j}\left\{ \frac{\varepsilon _{1}^{2}}{%
\varepsilon _{2}^{4}}\left(\tilde{u}^{BL}_{i,j}\right)^{(4)}-\varepsilon _{2}^{2}\left(
\sum_{k=0}^{\infty }\varepsilon _{2}^{k}\tilde{b}_{k}\frac{1}{\varepsilon
_{2}}\left(\tilde{u}^{BL}_{i,j}\right)^{\prime }\right) ^{\prime }+\sum_{k=0}^{\infty }\varepsilon _{2}^{k}\tilde{c%
}_{k}\tilde{u}^{BL}_{i,j}\right\} =0,
\end{equation*}
which leads to
\begin{equation*}
\sum_{i=0}^{\infty }\sum_{j=0}^{\infty }\varepsilon _{2}^{i-4}(\varepsilon
_{1}^{j+2}/\varepsilon _{2}^{2j})\left(\tilde{u}^{BL}_{i,j}\right)^{(4)}-\sum_{i=0}^{\infty
}\sum_{j=0}^{\infty }\varepsilon _{2}^{i}(\varepsilon _{1}/\varepsilon
_{2}^{2})^{j}\sum_{k=0}^{\infty }\varepsilon _{2}^{k}\left[ \tilde{b}_{k}%
\left(\tilde{u}^{BL}_{i,j}\right)^{^{\prime \prime }}+\tilde{b}_{k}^{\prime }\left(\tilde{u}^{BL}%
_{i,j}\right)^{\prime }\right] +
\end{equation*}
\begin{equation*}
+\sum_{i=0}^{\infty }\sum_{j=0}^{\infty }\varepsilon _{2}^{i}(\varepsilon
_{1}/\varepsilon _{2}^{2})^{j}\sum_{k=0}^{\infty }\varepsilon
_{2}^{k}\tilde{c}_{k}\tilde{u}^{BL}_{i,j}=0.
\end{equation*}
The first term above is
\begin{eqnarray*}
\sum_{i=0}^{\infty }\sum_{j=0}^{\infty }\varepsilon _{2}^{i-4}(\varepsilon
_{1}^{j+2}/\varepsilon _{2}^{2j})\left(\tilde{u}^{BL}_{i,j}\right)^{(4)}&=&\sum_{i=0}^{\infty
}\sum_{j=2}^{\infty }\varepsilon _{2}^{i-4}(\varepsilon _{1}^{j}/\varepsilon
_{2}^{2j-4})\left(\tilde{u}^{BL}_{i,j-2}\right)^{(4)} \\
&=&\sum_{i=0}^{\infty }\sum_{j=2}^{\infty
}\varepsilon _{2}^{i}(\varepsilon _{1}/\varepsilon _{2}^{2})^{j}\left(\tilde{u}^{BL}%
_{i,j-2}\right)^{(4)}.
\end{eqnarray*}
The second term is
\begin{equation*}
\sum_{i=0}^{\infty }\sum_{j=0}^{\infty }\varepsilon _{2}^{i}(\varepsilon
_{1}/\varepsilon _{2}^{2})^{j}\sum_{k=0}^{\infty }\varepsilon _{2}^{k}\left[ 
\tilde{b}_{k}\left(\tilde{u}^{BL}_{i,j}\right)^{{\prime \prime }}+\tilde{b}_{k}^{\prime }%
\left(\tilde{u}^{BL}_{i,j}\right)^{\prime }\right] =
\end{equation*}%
\begin{equation*}
=\sum_{i=0}^{\infty }\sum_{j=0}^{\infty }\varepsilon _{2}^{i}(\varepsilon
_{1}/\varepsilon _{2}^{2})^{j}\left\{ \tilde{b}_{0}\left(\tilde{u}^{BL}_{i,j}\right)^{\prime
\prime }+\sum_{k=1}^{\infty }\varepsilon _{2}^{k}\left[ \tilde{b}_{k}\left(\tilde{%
u}^{BL}_{i,j}\right)^{\prime \prime }+\tilde{b}_{k}^{\prime }\left(\tilde{u}^{BL}_{i,j}\right)^{\prime }%
\right] \right\}
\end{equation*}
\begin{equation*}
=\sum_{i=0}^{\infty }\sum_{j=2}^{\infty }\varepsilon _{2}^{i}(\varepsilon
_{1}/\varepsilon _{2}^{2})^{j}\left\{ \tilde{b}_{0}\left(\tilde{u}^{BL}_{i,j}\right)^{\prime
\prime }+\sum_{k=1}^{\infty }\varepsilon _{2}^{k}\left[ \tilde{b}_{k}\left(\tilde{%
u}^{BL}_{i,j}\right)^{\prime \prime }+\tilde{b}_{k}^{\prime }\left(\tilde{u}^{BL}_{i,j}\right)^{\prime }%
\right] \right\} + \\
\end{equation*}
\begin{equation*}
+\sum_{i=0}^{\infty }\varepsilon _{2}^{i}\left\{ \tilde{b}_{0}\left(\tilde{u}^{BL}%
_{i,0}\right)^{\prime \prime }+\sum_{k=1}^{\infty }\varepsilon _{2}^{k}\left[ 
\tilde{b}_{k}\left(\tilde{u}^{BL}_{i,0}\right)^{\prime \prime }+\tilde{b}_{k}^{\prime }%
\left(\tilde{u}^{BL}_{i,0}\right)^{\prime }\right] \right\} + \\
\end{equation*}
\begin{equation*}
+\sum_{i=0}^{\infty }\varepsilon _{2}^{i}(\varepsilon _{1}/\varepsilon
_{2}^{2})\left\{ \tilde{b}_{0}\left(\tilde{u}^{BL}_{i,1}\right)^{\prime \prime
}+\sum_{k=1}^{\infty }\varepsilon _{2}^{k}\left[ \tilde{b}_{k}\left(\tilde{u}^{BL}%
_{i,1}\right)^{\prime \prime}+\tilde{b}_{k}^{\prime }\left(\tilde{u}^{BL}_{i,1}\right)^{\prime }%
\right] \right\} .
\end{equation*}
Finally, the third term is
\begin{equation*}
\sum_{i=0}^{\infty }\sum_{j=0}^{\infty }\varepsilon _{2}^{i}(\varepsilon
_{1}/\varepsilon _{2}^{2})^{j}\sum_{k=0}^{\infty }\varepsilon _{2}^{k}\tilde{%
c}_{k}\tilde{u}^{BL}_{i,j}=\sum_{i=0}^{\infty }\sum_{j=0}^{\infty }\varepsilon
_{2}^{i}(\varepsilon _{1}/\varepsilon _{2}^{2})^{j}\left\{ \tilde{c}_{0}%
\tilde{u}^{BL}_{i,j}+\sum_{k=1}^{\infty }\varepsilon _{2}^{k}\tilde{c}_{k}\tilde{u%
}^{BL}_{i,j}\right\}
\end{equation*}
\begin{equation*}
=\sum_{i=0}^{\infty }\sum_{j=2}^{\infty }\varepsilon _{2}^{i}(\varepsilon
_{1}/\varepsilon _{2}^{2})^{j}\left\{ \tilde{c}_{0}\tilde{u}^{BL}%
_{i,j}+\sum_{k=1}^{\infty }\varepsilon _{2}^{k}\tilde{c}_{k}\tilde{u}^{BL}%
_{i,j}\right\} +\sum_{i=0}^{\infty }\varepsilon _{2}^{i}\left\{ \tilde{c}_{0}%
\tilde{u}^{BL}_{i,0}+\sum_{k=1}^{\infty }\varepsilon _{2}^{k}\tilde{c}_{k}\tilde{u%
}^{BL}_{i,0}\right\} +\\
\end{equation*}
\begin{equation*}
+\sum_{i=0}^{\infty }\varepsilon _{2}^{i}(\varepsilon _{1}/\varepsilon
_{2}^{2})\left\{ \tilde{c}_{0}\tilde{u}^{BL}_{i,1}+\sum_{k=1}^{\infty
}\varepsilon _{2}^{k}\tilde{c}_{k}\tilde{u}^{BL}_{i,1}\right\} .
\end{equation*}
Combining the three, we get
\begin{equation*}
\sum_{i=0}^{\infty }\sum_{j=2}^{\infty }\varepsilon _{2}^{i}(\varepsilon
_{1}/\varepsilon _{2}^{2})^{j}\left\{ \tilde{u}_{i,j-2}^{(4)}-\tilde{b}_{0}%
\left(\tilde{u}^{BL}_{i,j}\right)^{\prime \prime }+\tilde{c}_{0}\tilde{u}^{BL}%
_{i,j}-\sum_{k=1}^{\infty }\varepsilon _{2}^{k}\left[ \tilde{b}_{k}\left(\tilde{u}^{BL}%
_{i,j}\right)^{\prime \prime }+\tilde{b}_{k}^{\prime }\left(\tilde{u}^{BL}_{i,j}\right)^{\prime }%
\right] +\sum_{k=1}^{\infty }\varepsilon _{2}^{k}\tilde{c}_{k}\tilde{u}^{BL}%
_{i,j}\right\} -
\end{equation*}
\begin{equation*}
-\sum_{i=0}^{\infty }\varepsilon _{2}^{i}\left\{ \tilde{b}_{0}\left(\tilde{u}^{BL}%
_{i,0}\right)^{\prime \prime }+\sum_{k=1}^{\infty }\varepsilon _{2}^{k}\left[ 
\tilde{b}_{k}\left(\tilde{u}^{BL}_{i,0}\right)^{\prime \prime }+\tilde{b}_{k}^{\prime }%
\left(\tilde{u}^{BL}_{i,0}\right)^{\prime }\right] \right\} 
\end{equation*}
\begin{equation*}
-\sum_{i=0}^{\infty }\varepsilon
_{2}^{i}(\varepsilon _{1}/\varepsilon _{2}^{2})\left\{ \tilde{b}_{0}\left(\tilde{u}^{BL}%
_{i,1}\right)^{\prime \prime }+\sum_{k=1}^{\infty }\varepsilon _{2}^{k}\left[ 
\tilde{b}_{k}\left(\tilde{u}^{BL}_{i,1}\right)^{\prime \prime }+\tilde{b}_{k}^{\prime }%
\left(\tilde{u}^{BL}_{i,1}\right)^{\prime }\right] \right\} +
\end{equation*}
\begin{equation*}
+\sum_{i=0}^{\infty }\varepsilon _{2}^{i}\left\{ \tilde{c}_{0}\tilde{u}^{BL}%
_{i,0}+\sum_{k=1}^{\infty }\varepsilon _{2}^{k}\tilde{c}_{k}\tilde{u}^{BL}%
_{i,0}\right\} +\sum_{i=0}^{\infty }\varepsilon _{2}^{i}(\varepsilon
_{1}/\varepsilon _{2}^{2})\left\{ \tilde{c}_{0}\tilde{u}^{BL}_{i,1}+\sum_{k=1}^{%
\infty }\varepsilon _{2}^{k}\tilde{c}_{k}\tilde{u}^{BL}_{i,1}\right\} =0,
\end{equation*}
which in turn gives
\begin{equation*}
\sum_{i=0}^{\infty }\sum_{j=2}^{\infty }\varepsilon _{2}^{i}(\varepsilon
_{1}/\varepsilon _{2}^{2})^{j}\left\{ \left(\tilde{u}^{BL}_{i,j-2}\right)^{(4)}-\tilde{b}_{0}%
\left(\tilde{u}^{BL}_{i,j}\right)^{\prime \prime }+\tilde{c}_{0}\tilde{u}^{BL}%
_{i,j}-\sum_{k=1}^{\infty }\varepsilon _{2}^{k}\left[ \tilde{b}_{k}\left(\tilde{u}^{BL}%
_{i,j}\right)^{\prime \prime}+\tilde{b}_{k}^{\prime }\left(\tilde{u}^{BL}_{i,j}\right)^{\prime }%
\right] \right. 
\end{equation*}
\begin{equation*}
+\left. \sum_{k=1}^{\infty }\varepsilon _{2}^{k}\tilde{c}_{k}\tilde{u}^{BL}%
_{i,j} \right\}+\sum_{i=0}^{\infty }\varepsilon _{2}^{i}\left\{ \tilde{c}_{0}\tilde{u}^{BL}%
_{i,0}-\tilde{b}_{0}\left(\tilde{u}^{BL}_{i,0}\right)^{\prime \prime }+\sum_{k=1}^{\infty
}\varepsilon _{2}^{k}\left\{ \tilde{c}_{k}\tilde{u}^{BL}_{i,0}-\left[ \tilde{b}%
_{k}\left(\tilde{u}^{BL}_{i,0}\right)^{\prime \prime }+\tilde{b}_{k}^{\prime }\left(\tilde{u}^{BL}%
_{i,0}\right)^{\prime }\right] \right\} \right\} +
\end{equation*}
\begin{equation*}
+\sum_{i=0}^{\infty }\varepsilon _{2}^{i}(\varepsilon _{1}/\varepsilon
_{2}^{2})\left\{ \tilde{c}_{0}\tilde{u}^{BL}_{i,1}-\tilde{b}_{0}\left(\tilde{u}^{BL}%
_{i,1}\right)^{\prime \prime }+\sum_{k=1}^{\infty }\varepsilon _{2}^{k}\left\{ 
\tilde{c}_{k}\tilde{u}^{BL}_{i,1}-\left[ \tilde{b}_{k}\left(\tilde{u}^{BL}_{i,1}\right)^{\prime
\prime }+\tilde{b}_{k}^{\prime }\left(\tilde{u}^{BL}_{i,1}\right)^{\prime }\right] \right\}
\right\} =0.
\end{equation*}
We arrive at
\begin{equation*}
\left. 
\begin{array}{c}
-\tilde{b}_{0}\left(\tilde{u}^{BL}_{0,0}\right)^{\prime \prime }+\tilde{c}_{0}\tilde{u}^{BL}_{0,0}=0 \\
-\tilde{b}_{0}\left(\tilde{u}^{BL}_{i,0}\right)^{\prime \prime }+\tilde{c}_{0}\tilde{u}^{BL}_{i,0}=%
\sum_{\ell =1}^{i}\left\{ \left( \tilde{b}_{\ell }%
\left(\tilde{u}^{BL}_{i-\ell ,0}\right)^{\prime }\right) ^{\prime }-\tilde{c}_{\ell }\tilde{u}^{BL}%
_{i-\ell ,0}\right\} ,i\geq 1 \\
-\tilde{b}_{0}\left(\tilde{u}^{BL}_{0,1}\right)^{\prime \prime }+\tilde{c}_{0}\tilde{u%
}^{BL}_{0,1}=0 \\
-\tilde{b}_{0}\left(\tilde{u}^{BL}_{i,1}\right)^{\prime \prime }+\tilde{c}_{0}\tilde{u}^{BL}_{i,1}=%
\sum_{\ell =1}^{i}\left\{ \left( \tilde{b}_{\ell }%
\left(\tilde{u}^{BL}_{i-\ell ,1}\right)^{\prime }\right) ^{\prime }-\tilde{c}_{\ell }\tilde{u}^{BL}%
_{i-\ell ,1}\right\} ,i\geq 1 \\
-\tilde{b}_{0}\left(\tilde{u}^{BL}_{0,j}\right)^{\prime \prime }+\tilde{c}_{0}\tilde{u}^{BL}_{0,j}=-%
\left(\tilde{u}^{BL}_{0,j-2}\right)^{(4)},j\geq 2 \\
-\tilde{b}_{0}\left(\tilde{u}^{BL}_{i,j}\right)^{\prime \prime }+\tilde{c}_{0}\tilde{u}^{BL}_{i,j}=-%
\left(\tilde{u}^{BL}_{i,j-2}\right)^{(4)}+\sum_{\ell =1}^{i}\left\{ \left( \tilde{b}_{\ell }%
\left(\tilde{u}^{BL}_{i-\ell ,j}\right)^{\prime }\right) ^{\prime }-\tilde{c}_{\ell }\tilde{u}^{BL}%
_{i-\ell ,j}\right\} ,i\geq 1,j\geq 2%
\end{array}%
\right\}
\end{equation*}
as desired.

Next we show (\ref{ubar_ij}): we begin with
\begin{equation*}
\sum_{i=0}^{\infty }\sum_{j=0}^{\infty }\varepsilon _{2}^{i}(\varepsilon
_{1}/\varepsilon _{2}^{2})^{j}\left\{ \varepsilon _{1}^{2}\left( \frac{\varepsilon
_{1}}{\varepsilon _{2}}\right) ^{-4}\left(\bar{u}^{BL}_{i,j}\right)^{(4)}-\varepsilon
_{2}^{2}\left( \sum_{k=0}^{\infty }\left( \frac{\varepsilon _{1}}{\varepsilon _{2}}%
\right) ^{k} \bar{b}_{k}\left( \frac{\varepsilon
_{1}}{\varepsilon _{2}}\right) ^{-1}\left(\bar{u}^{BL}_{i,j}\right)^{\prime }\right)
^{\prime } \right.
\end{equation*}
\begin{equation*}
\left. +\sum_{k=0}^{\infty }\left( \frac{\varepsilon _{1}}{\varepsilon
_{2}}\right) ^{k}\bar{c}_{k}\bar{u}^{BL}_{i,j}\right\} =0,
\end{equation*}
or equivalently
\begin{equation*}
\sum_{i=0}^{\infty }\sum_{j=0}^{\infty }\varepsilon _{2}^{i}(\varepsilon
_{1}/\varepsilon _{2}^{2})^{j-2}\left(\bar{u}^{BL}_{i,j}\right)^{(4)} 
\end{equation*}
\begin{equation*}
-\sum_{i=0}^{\infty
}\sum_{j=0}^{\infty }\varepsilon _{2}^{i}(\varepsilon _{1}/\varepsilon
_{2}^{2})^{j-2}\sum_{k=0}^{\infty }\left( \frac{\varepsilon _{1}}{\varepsilon _{2}}%
\right) ^{k}\left[ \left(\bar{b}_{k}\bar{u}^{BL}_{i,j}\right)^{\prime \prime }+\bar{b}_{k}^{\prime }\left(\bar{u}^{BL}_{i,j}\right)^{\prime }\right]
\end{equation*}
\begin{equation*}
+\sum_{i=0}^{\infty }\sum_{j=0}^{\infty }\varepsilon _{2}^{i}(\varepsilon
_{1}/\varepsilon _{2}^{2})^{j}\sum_{k=0}^{\infty }\left( \frac{\varepsilon
_{1}}{\varepsilon _{2}}\right) ^{k}\bar{c}_{k}\bar{u}^{BL}_{i,j}=0 ,
\end{equation*}
which we write as
\begin{equation*}
\sum_{i=0}^{\infty }\varepsilon _{2}^{i}(\varepsilon
_{1}/\varepsilon _{2}^{2})^{-2}\left(\bar{u}^{BL}_{i,0}\right)^{(4)}+\sum_{i=0}^{\infty }\varepsilon _{2}^{i}(\varepsilon
_{1}/\varepsilon _{2}^{2})^{-1}\left(\bar{u}^{BL}_{i,1}\right)^{(4)}+\sum_{i=0}^{\infty }\sum_{j=0}^{\infty }\varepsilon _{2}^{i}(\varepsilon
_{1}/\varepsilon _{2}^{2})^{j}\left(\bar{u}^{BL}_{i,j+2}\right)^{(4)}
\end{equation*}
\begin{equation*}-\sum_{i=0}^{\infty} \varepsilon _{2}^{i}(\varepsilon _{1}/\varepsilon
_{2}^{2})^{-2}\sum_{k=0}^{\infty }\left( \frac{\varepsilon _{1}}{\varepsilon _{2}}%
\right) ^{k}\left[ \bar{b}_{k}\left(\bar{u}^{BL}_{i,0}\right)^{\prime \prime }+\bar{b}_{k}^{\prime }\left(\bar{u}^{BL}_{i,0}\right)^{\prime }\right]
\end{equation*}
\begin{equation*}-\sum_{i=0}^{\infty} \varepsilon _{2}^{i}(\varepsilon _{1}/\varepsilon
_{2}^{2})^{-1}\sum_{k=0}^{\infty }\left( \frac{\varepsilon _{1}}{\varepsilon _{2}}%
\right) ^{k}\left[ \bar{b}_{k}\left(\bar{u}^{BL}_{i,1}\right)^{\prime \prime }+\bar{b}_{k}^{\prime }\left(\bar{u}^{BL}_{i,1}\right)^{\prime }\right]
\end{equation*}
\begin{equation*}-\sum_{i=0}^{\infty
}\sum_{j=0}^{\infty }\varepsilon _{2}^{i}(\varepsilon _{1}/\varepsilon
_{2}^{2})^{j}\sum_{k=0}^{\infty }\left( \frac{\varepsilon _{1}}{\varepsilon _{2}}%
\right) ^{k}\left[ \bar{b}_{k}\left(\bar{u}^{BL}_{i,j+2}\right)^{\prime \prime }+\bar{b}_{k}^{\prime }\left(\bar{u}^{BL}_{i,j+2}\right)^{\prime }\right]
\end{equation*}%
\begin{equation*}
+\sum_{i=0}^{\infty }\sum_{j=0}^{\infty }\varepsilon _{2}^{i}(\varepsilon
_{1}/\varepsilon _{2}^{2})^{j}\sum_{k=0}^{\infty }\left( \frac{\varepsilon
_{1}}{\varepsilon _{2}}\right) ^{k}\bar{c}_{k}\bar{u}^{BL}_{i,j}=0.
\end{equation*}
This gives
\begin{equation*}
\sum_{i=0}^{\infty }\sum_{j=0}^{\infty }\varepsilon _{2}^{i}(\varepsilon
_{1}/\varepsilon _{2}^{2})^{j}\left\{\left( \bar{u}^{BL}_{i,j+2}\right)^{(4)}-\bar{b}_{0}\left(\bar{u}^{BL}_{i,j+2}\right)^{\prime \prime }+\bar{c}_{0}\bar{u}^{BL}_{i,j} \right.
\end{equation*}
\begin{equation*}
\left. +\sum_{k=1}^{\infty }\varepsilon _{2}^{k}\left( \frac{\varepsilon _{1}}{\varepsilon _{2}^{2}}%
\right) ^{k}\left\{ \bar{c}_{k}\bar{u}^{BL}_{i,j}-\left[ \bar{b}_{k}\left(\bar{u}^{BL}_{i,j+2}\right)^{\prime \prime }+\bar{b}_{k}^{\prime }\left(\bar{u}^{BL}_{i,j+2}\right)^{\prime }\right] \right\} \right\}
\end{equation*}
\begin{equation*}
+\sum_{i=0}^{\infty }\varepsilon _{2}^{i}(\varepsilon _{1}/\varepsilon
_{2}^{2})^{-2}\left\{ \left(\bar{u}^{BL}_{i,0}\right)^{(4)}-\bar{b}_{0}\left(\bar{u}^{BL}_{i,0}\right)^{\prime \prime }-\sum_{k=1}^{\infty }\varepsilon _{2}^{k}\left( \frac{%
\varepsilon _{1}}{\varepsilon _{2}^{2}}\right) ^{k}\left[ \bar{b}_{k}\left(\bar{u}^{BL}_{i,0}\right)^{\prime \prime }+\bar{b}_{k}^{\prime }\left(\bar{u}^{BL}_{i,0}\right)^{\prime }\right] \right\}
\end{equation*}
\begin{equation*}
+\sum_{i=0}^{\infty
}\varepsilon _{2}^{i}(\varepsilon _{1}/\varepsilon _{2}^{2})^{-1}\left\{ 
\bar{u}_{i,1}^{(4)}-\bar{b}_{0}\left(\bar{u}^{BL}_{i,1}\right)^{\prime \prime }-\sum_{k=1}^{\infty }\varepsilon _{2}^{k}\left( \frac{%
\varepsilon _{1}}{\varepsilon _{2}^{2}}\right) ^{k}\left[ \bar{b}_{k}\left(\bar{u}^{BL}_{i,1}\right)^{\prime \prime }+\bar{b}_{k}^{\prime }\left(\bar{u}^{BL}_{i,1}\right)^{\prime }\right] \right\} =0,
\end{equation*}
from which we obtain the desired result.

%---------------------------------------

\end{document}